 \newtheorem{thm}{Theorem}[section]
 \newtheorem{Rem}{Remark}[section]
  \newtheorem{Cor}{Corollary}[section]
 \newtheorem{cor}[Cor]{Corollary}
 \newtheorem{lem}[thm]{Lemma}
 \newtheorem{prop}[thm]{Proposition}
 \theoremstyle{definition}
 \newtheorem{defn}[thm]{Definition}
  \newtheorem*{ack}{Acknowledgments}
 \theoremstyle{remark}
 \newtheorem{rem}[Rem]{Remark} 
 \numberwithin{equation}{section}
\begin{document}
\title[Uniqueness of solutions to curvature problems]{Uniqueness of solutions to some classes of anisotropic and isotropic curvature problems}

\author{Haizhong Li}
\address{Department of Mathematical Sciences, Tsinghua University, Beijing 100084, P.R. China}
\email{\href{mailto:lihz@tsinghua.edu.cn}{lihz@tsinghua.edu.cn}}

\author{Yao Wan}
\address{Department of Mathematical Sciences, Tsinghua University, Beijing 100084, P.R. China}
\email{\href{mailto:y-wan19@mails.tsinghua.edu.cn}{y-wan19@mails.tsinghua.edu.cn}}

\keywords{Uniqueness, $L_p$ dual Minkowski problem, Orlicz-Minkowski problem, Orlicz-Christoffel-Minkowski problem, anisotropic curvature problem, isotropic curvature problem}
\subjclass[2020]{53A07; 52A20 ; 35A02; 35K96}


\begin{abstract}
In this paper, we apply various methods to establish the uniqueness of solutions to some classes of anisotropic and isotropic curvature problems. Firstly, by employing integral formulas derived by S. S. Chern \cite{Ch59}, we obtain the uniqueness of smooth admissible solutions to a class of Orlicz-(Christoffel)-Minkowski problems.
Secondly, inspired by Simon's uniqueness result \cite{Si67}, we then prove that the only smooth strictly convex solution to the following isotropic curvature problem
\begin{equation}\label{ab-1}
     \left(\frac{P_k(W)}{P_l(W)}\right)^{\frac{1}{k-l}}=\psi(u,r)\quad \text{on}\ \mathbb{S}^n
\end{equation}
must be an origin-centred sphere, where $W=(\nabla^2 u+u g_0)$, $\partial_1\psi\ge 0,\partial_2\psi\ge 0$ and at least one of these inequalities is strict. As an application, we establish the uniqueness of solutions to the isotropic Gaussian-Minkowski problem. Finally, we derive the uniqueness result for the following isotropic $L_p$ dual Minkowski problem
\begin{equation}\label{ab-2}
     u^{1-p} r^{q-n-1}\det(W)=1\quad \text{on}\ \mathbb{S}^n,
\end{equation}
where $-n-1<p\le -1$ and $n+1\le q\le n+\frac{1}{2}+\sqrt{\frac{1}{4}-\frac{(1+p)(n+1+p)}{n(n+2)}}$. This result utilizes the method developed by Ivaki and Milman \cite{IM23} and generalizes a result due to Brendle, Choi and Daskalopoulos \cite{BCD17}.
\end{abstract}

\maketitle

\section{Introduction}\label{sec:1}

Let $M_{0}$ be a smooth closed strictly convex hypersurface in Euclidean space $\mathbb{R}^{n+1}$ which encloses the origin, and let $X_{0}: \mathbb{S}^{n} \rightarrow \mathbb{R}^{n+1}$ be a smooth immersion with $X_{0}(\mathbb{S}^{n})=M_0$. Suppose $f$ is a positive continuous function defined on $\mathbb{S}^n$, $\varphi$ is a positive continuous function defined in $(0,\infty)$, and $k$ is an integer with $1\leq k \leq n$. We consider the anisotropic expanding curvature flow given by a family of smooth immersions $X:\mathbb{S}^{n}\times[0,T) \rightarrow \mathbb{R}^{n+1}$ solving the evolution equation
\begin{equation}\label{ev-eq}
\begin{split}
    \left\{\begin{array}{l}
    \frac{\partial X}{\partial t} (\cdot,t)= \frac{1}{f(\nu)}P_{k}(\cdot,t) \varphi(\langle X, \nu \rangle)\langle X, \nu \rangle \nu,\\
    X(\cdot, 0) = X_{0}(\cdot),
    \end{array}\right.
\end{split}
\end{equation}
where $\nu$ is the unit outer normal vector of $M_t=X(\mathbb{S}^n,t)$, and $P_k$ is the normalized $k$-th elementary symmetric function for principal curvature radii, i.e.
$$P_k(\cdot,t)=\frac{1}{\binom{n}{k}}\sum\limits_{i_1<\ldots<i_k}\lambda_{i_1}\cdots\lambda_{i_k},$$
$\lambda_i$ is the principal curvature radii of $M_{t}$. Let $(\mathbb{S}^n,g_0,\nabla)$ be the unit sphere equipped with its standard metric and Levi-Civita connection. For a smooth closed strictly convex hypersurface $M$ with the support function $u=\langle X,\nu\rangle$, we note that principal curvature radii $(\lambda_1,\ldots,\lambda_n)$ of $M$ are the eigenvalues of the positive-definite matrix $W=(\nabla^2 u+u g_0)$.
We call $X: \mathbb{S}^{n} \rightarrow \mathbb{R}^{n+1}$ a \textit{homothetic self-similar solution} to the flow (\ref{ev-eq}) if it satisfies
\begin{equation}\label{self-eq}
    c\varphi(u)P_k(W)=f\quad \text{on}\ \mathbb{S}^n,
\end{equation}
where $c$ is a positive constant and $u$ is the support function of $M=X(\mathbb{S}^n)$. In this paper, we are concerned with the uniqueness of solutions to (\ref{self-eq}).


When $k=n$, (\ref{self-eq}) can be written as a Monge-Amp\`ere type equation
\begin{equation}\label{OM-eq}
    c\varphi(u)\det(W)=f\quad \text{on}\ \mathbb{S}^n,
\end{equation}
which corresponds to the smooth case of the \textit{Orlicz-Minkowski problem}. The Orlicz-Minkowski problem is a fundamental problem in convex geometry within the framework of the Orlicz-Brunn-Minkowski theory. It serves as a generalization of the classical Minkowski problem, which investigates the necessary and sufficient conditions for a Borel measure on the unit sphere $\mathbb{S}^n$ to be a multiple of the Orlicz surface area measure of a convex body in $\mathbb{R}^{n+1}$. In \cite{HLYZ10}, Haberl, Lutwak, Yang and Zhang studied the even case of the Orlicz-Minkowski problem. Since then, this problem has attracted significant attention from numerous scholars, see e.g. \cites{BBC19,BIS21,GHWXY19,GHXY20,HH12,HLYZ16,JL19,Sa22,Sun18,SL15,XJL14}.
In particular, when $\varphi(s)=s^{1-p}$,  (\ref{OM-eq}) reduces to the \textit{$L_p$ Minkowski problem}, which has been extensively studied in the literature (see, e.g. \cites{And03,Bor22,BLYZ13, CW06,CY76,HLW16,HLX15,HLYZ05,JLW15,JLZ16,KM22,LW13,Lut93,Mil21,Zhu14} and Schneider's book \cite{Sch14} for relevant references). 
In the smooth case, the $L_p$ Minkowski problem is equivalent to solving the following PDE:
\begin{equation}\label{pM-eq}
    u^{1-p}\det(W)=f\quad \text{on}\ \mathbb{S}^n.
\end{equation}

When $1\leq k\le n-1$, (\ref{self-eq}) is known as the \textit{Orlicz-Christoffel-Minkowski problem}, see e.g. \cite{JLL21}. Moreover, when $1\le k\le n-1$ and $\varphi(s) = s^{1-p}$, (\ref{self-eq}) is referred to as the \textit{$L_{p}$ Christoffel-Minkowski problem}, which can be reduced to the following nonlinear PDE:
\begin{equation}\label{pCM-eq}
    u^{1-p} P_k(W)=f\quad \text{on}\ \mathbb{S}^n.
\end{equation}
We refer to \cites{Ch20,GLM18,Gu10,GX18,HMS04,Iv19,SY20} as examples for investigating the existence, uniqueness, and convexity of solutions to (\ref{pCM-eq}).

$\ $

On the other hand, let $\Tilde{M}_{0}$ be a smooth closed strictly convex hypersurface in $\mathbb{R}^{n+1}$ which encloses the origin, and let $\Tilde{X}_{0}: \mathbb{S}^{n} \rightarrow \mathbb{R}^{n+1}$ be a smooth immersion with $\Tilde{X}_{0}(\mathbb{S}^{n})=\Tilde{M}_0$. Suppose $\Psi$ is a positive continuous function defined in $(0,\infty)\times(0,\infty)$, and $l,k$ are two integer with $0\leq l< k \leq n$. We consider the isotropic expanding curvature flow given by a family of smooth immersions $\Tilde{X}:\mathbb{S}^{n}\times[0,T) \rightarrow \mathbb{R}^{n+1}$ solving the evolution equation
\begin{equation}\label{ev-eq-2}
\begin{split}
    \left\{\begin{array}{l}
    \frac{\partial \Tilde{X}}{\partial t} (\cdot,t)= \left(\frac{P_{k}}{P_{l}}\right)^{\frac{1}{k-l}}(\cdot,t) \Psi(\langle \Tilde{X}, \Tilde{\nu} \rangle,|\Tilde{X}|)\langle \Tilde{X}, \Tilde{\nu} \rangle \Tilde{\nu},\\
    \Tilde{X}(\cdot, 0) = \Tilde{X}_{0}(\cdot),
    \end{array}\right.
\end{split}
\end{equation}
where $\Tilde{\nu}$ is the unit outer normal vector of $\Tilde{M}_t=\Tilde{X}(\mathbb{S}^n,t)$. We call $\Tilde{X}: \mathbb{S}^{n} \rightarrow \mathbb{R}^{n+1}$ a \textit{homothetic self-similar solution} to the flow (\ref{ev-eq-2}) if it satisfies
\begin{equation}\label{self-eq-2}
    c\Psi(u,r)\left(\frac{P_k}{P_l}\right)^{\frac{1}{k-l}}(W)=1\quad \text{on}\ \mathbb{S}^n,
\end{equation}
where $c$ is a positive constant, $u$ is the support function and $r$ is the radial function of $\Tilde{M}=\Tilde{X}(\mathbb{S}^n)$. When $l=0$ and $\partial_2\psi=0$, then (\ref{self-eq-2}) reduces to the isotropic case of (\ref{self-eq}). Moreover, there are two important cases for (\ref{self-eq-2}): First, when $l=0,\ k=n$ and $\Psi(s,t)=s^{1-p}t^{q-n-1}$, (\ref{self-eq-2}) becomes the isotropic $L_p$ dual Minkowski problem, corresponding to the following equation
\begin{equation}\label{p,q-M-eq}
    u^{1-p} r^{q-n-1}\det(W)=1\quad \text{on}\ \mathbb{S}^n.
\end{equation}
Second, when $l=0,\ k=n$ and $\Psi(s,t)=s^{1-p}e^{-\frac{t^2}{2}}$, (\ref{self-eq-2}) becomes the isotropic $L_p$ Gaussian-Minkowski problem, corresponding to the following equation
\begin{equation}\label{p-GM-eq}
    u^{1-p} e^{-\frac{r^2}{2}}\det(W)=1\quad \text{on}\ \mathbb{S}^n.
\end{equation}

We are also concerned with the uniqueness of solutions to isotropic curvature problems. When $\Psi(u,r)=u^{\alpha}$, we refer to \cites{CG21,GLW22,GLM18} regarding the uniqueness of solutions to (\ref{self-eq-2}).


\subsection{Uniqueness of solutions to Orlicz-(Christoffel)-Minkowski problems}$\ $

In this paper, we always assume that all hypersurfaces $M^n$ are smooth closed hypersurfaces in $\mathbb{R}^{n+1}$ that contain the origin in its interior and bound a convex body $K$ with strictly positive Gauss curvature. Denote by $u$ the support function of $K$, and by $r$ the radial function of $K$. We write the Hessian matrix of $u$ with respect to $(\mathbb{S}^n,g_0,\nabla)$ as $W=(\nabla^2 u+u g_0)$.

Let $\varphi:(0,\infty)\to (0,\infty)$ be a continuous function. We consider the following Orlicz-Christoffel-Minkowski problem:
\begin{equation}\label{OCM-eq}
 \varphi(u)P_k(W)=f\quad \text{on}\ \mathbb{S}^n.
\end{equation}
A solution $u$ to (\ref{OCM-eq}) is called \textit{admissible} if $W=(\nabla^2 u+u g_0)\in\Gamma_k$, where $\text{Sym}(n)$ is the space of $n\times n$ symmetric matrices and $\Gamma_k=\{A\in\text{Sym}(n):\ P_i(A)>0,\ i=1,\ldots,k\}$.

Using integral formulas, S. S. Chern \cite{Ch59} proved the classical Alexandrov-Fenchel-Jessen theorem, which establishes the uniqueness of solutions to (\ref{OCM-eq}) with $\varphi\equiv 1$. Furthermore, we obtain the following uniqueness result by utilizing integral formulas.

\begin{thm}\label{main-thm-1}
Let $2\le k\le n$ be an integer. Let $\varphi:(0,\infty)\to(0,\infty)$ be a continuous function that satisfies
\begin{equation}\label{1.11}
\begin{split}
    & (s\varphi^{\frac{1}{k}}(s)-t\varphi^{\frac{1}{k}}(t))(\varphi^{\frac{k-1}{k}}(s)-\varphi^{\frac{k-1}{k}}(t))<0,
\end{split}
\end{equation}
for any $t>s>0$. For any positive function $f\in C(\mathbb{S}^n)$, then the smooth admissible solution to (\ref{OCM-eq}) is unique.
\end{thm}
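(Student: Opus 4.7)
The plan is to argue by contradiction. Suppose $u_1, u_2$ are two distinct smooth admissible solutions of (\ref{OCM-eq}), so $W_i := \nabla^2 u_i + u_i g_0 \in \Gamma_k$. I aim to derive $u_1 \equiv u_2$ by combining a symmetric Chern-type integral identity with the Newton--Maclaurin (Alexandrov--Fenchel) inequality on $\Gamma_k$, then extracting a pointwise contradiction via hypothesis (\ref{1.11}).

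First I would record the Chern integral identity. Let $S_{k-1}^{ij}(W) := \partial \sigma_k/\partial W_{ij}$ denote the Newton tensor associated with $\sigma_k = \binom{n}{k} P_k$. Since $\nabla_j S_{k-1}^{ij}(W_1) = 0$ on $(\mathbb{S}^n, g_0)$ and $\sigma_k(W_1) = \frac{1}{k} S_{k-1}^{ij}(W_1)(W_1)_{ij}$ by Euler's identity, writing $(W_1)_{ij} = \nabla_{ij} u_1 + u_1 g_{ij}$ and integrating by parts twice on the closed manifold $\mathbb{S}^n$ yields Chern's symmetric formula
\begin{equation*}
    \int_{\mathbb{S}^n} u_2\, \sigma_k(W_1)\, d\sigma = \int_{\mathbb{S}^n} u_1\, \sigma_k(W_1; W_2)\, d\sigma,
\end{equation*}
where $\sigma_k(W_1; W_2) := \frac{1}{k} S_{k-1}^{ij}(W_1)(W_2)_{ij}$ is the mixed elementary symmetric function (one slot filled by $W_2$, the remaining $k-1$ by $W_1$). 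The analogous identity with the roles of $1$ and $2$ exchanged follows immediately.

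Next, on the Gårding cone $\Gamma_k$ the Newton--Maclaurin (Alexandrov--Fenchel) inequality gives
\begin{equation*}
    \sigma_k(W_1; W_2) \ge \sigma_k(W_1)^{\frac{k-1}{k}}\sigma_k(W_2)^{\frac{1}{k}},
\end{equation*}
with equality iff $W_1$ and $W_2$ are proportional. Applying this with both Chern identities, substituting $\sigma_k(W_i) = \binom{n}{k} f/\varphi(u_i)$ from the PDE, and adding the two resulting integral inequalities, the integrand factors cleanly as $(\alpha^{k-1} - \beta^{k-1})(u_2 \alpha - u_1 \beta)$ with $\alpha := \sigma_k(W_1)^{1/k}$ and $\beta := \sigma_k(W_2)^{1/k}$. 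Clearing denominators one arrives at
\begin{equation*}
    \binom{n}{k}\int_{\mathbb{S}^n} \frac{f\, \bigl[\varphi(u_2)^{\frac{k-1}{k}} - \varphi(u_1)^{\frac{k-1}{k}}\bigr]\,\bigl[u_2\,\varphi(u_2)^{\frac{1}{k}} - u_1\,\varphi(u_1)^{\frac{1}{k}}\bigr]}{\varphi(u_1)\varphi(u_2)}\, d\sigma \;\ge\; 0.
\end{equation*}

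Hypothesis (\ref{1.11}) says precisely that the bracketed product is strictly negative whenever $u_1(x) \ne u_2(x)$, so with $f > 0$ the integrand is pointwise nonpositive and strictly negative on $\{u_1 \ne u_2\}$; the displayed inequality then forces this set to have measure zero and, by continuity, $u_1 \equiv u_2$. The main obstacle I anticipate is the algebraic bookkeeping: carefully using the divergence-freeness of $S_{k-1}(W_1)$ on $(\mathbb{S}^n, g_0)$ so that the double integration by parts produces exactly the mixed object $\sigma_k(W_1; W_2)$, and verifying that summing the two Newton--Maclaurin inequalities produces the tidy factorization matching the structural form $(s\varphi^{1/k}(s) - t\varphi^{1/k}(t))(\varphi^{(k-1)/k}(s) - \varphi^{(k-1)/k}(t))$ of hypothesis (\ref{1.11}) term for term.
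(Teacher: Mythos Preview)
Your proposal is correct and follows essentially the same approach as the paper. The paper packages the Chern identity as its Lemma~2.3-3 (formula (2.22)) and the mixed inequality as Lemma~2.1-4, then factors the integrand in the form $\bigl(1-(\varphi(\bar u)/\varphi(u))^{(k-1)/k}\bigr)\bigl(1-(\bar u/u)(\varphi(\bar u)/\varphi(u))^{1/k}\bigr)\,uP_{0k}$; your factorization $(\alpha^{k-1}-\beta^{k-1})(u_2\alpha-u_1\beta)$ is the same expression in different coordinates, and the contradiction via (1.11) is identical. One terminological quibble: the inequality $\sigma_k(W_1;W_2)\ge\sigma_k(W_1)^{(k-1)/k}\sigma_k(W_2)^{1/k}$ on $\Gamma_k$ is G{\aa}rding's hyperbolicity inequality (the paper's Lemma~2.1-3/2.1-4), not Newton--Maclaurin.
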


In particular, when considering the $L_p$ Christoffel-Minkowski problem (\ref{pCM-eq}), the uniqueness of admissible solutions was established by Guan-Ma-Trudinger-Zhu \cite{Gu10} for $p=1$, by Hu-Ma-Shen \cite{HMS04} for $p\ge k+1$, and Guan-Xia \cite{GX18} for $1<p<k+1$. In the isotropic case, Chen \cite{Ch20} proved the uniqueness of strictly convex solutions to (\ref{pCM-eq}) with $f\equiv 1$ for $1>p>1-k$, while McCoy \cite{Mc11} established the uniqueness for $p=1-k$. As an application of Theorem \ref{main-thm-1}, by choosing $\varphi(s)=s^{1-p}$, we provide a new proof for the following uniqueness result. 

\begin{cor}[\cite{GX18}]\label{cor-2}
Let $2\le k\le n$ be an integer. Suppose $1<p<k+1$ is a real number. For any positive function $f\in C(\mathbb{S}^n)$, then the smooth admissible solution to (\ref{pCM-eq}) is unique. 
\end{cor}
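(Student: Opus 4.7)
The plan is to apply Theorem \ref{main-thm-1} with the choice $\varphi(s) = s^{1-p}$, under which (\ref{OCM-eq}) becomes exactly (\ref{pCM-eq}). Since $\varphi$ is positive and continuous on $(0,\infty)$, the only thing to check is that the monotonicity hypothesis (\ref{1.11}) holds for this particular $\varphi$ on the stated range $1 < p < k+1$.

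First I would compute the two quantities appearing in (\ref{1.11}):
\begin{equation*}
s\varphi^{\frac{1}{k}}(s) = s^{\frac{k+1-p}{k}}, \qquad \varphi^{\frac{k-1}{k}}(s) = s^{\frac{(1-p)(k-1)}{k}}.
\end{equation*}
Hence (\ref{1.11}) reduces to the statement that, for all $t > s > 0$,
\begin{equation*}
\left(s^{\frac{k+1-p}{k}} - t^{\frac{k+1-p}{k}}\right)\left(s^{\frac{(1-p)(k-1)}{k}} - t^{\frac{(1-p)(k-1)}{k}}\right) < 0.
\end{equation*}
For $t > s > 0$, a product of the form $(s^a - t^a)(s^b - t^b)$ is strictly negative precisely when the real exponents $a$ and $b$ are both nonzero and have opposite signs.

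Next I would verify this sign configuration under the hypothesis $1 < p < k+1$: the exponent $(k+1-p)/k$ is strictly positive because $p < k+1$, while the exponent $(1-p)(k-1)/k$ is strictly negative because $k \ge 2$ forces $k-1 > 0$ and $p > 1$ forces $1-p < 0$. Thus the two factors in the displayed product carry opposite signs, (\ref{1.11}) holds, and Theorem \ref{main-thm-1} delivers the uniqueness of smooth admissible solutions to (\ref{pCM-eq}).

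Since the argument is a direct substitution into Theorem \ref{main-thm-1}, there is no real obstacle to overcome. It is worth remarking, however, that the boundary values $p = 1$ and $p = k+1$ are precisely the points at which one of the two exponents vanishes and (\ref{1.11}) degenerates to an equality; this explains why the range $1 < p < k+1$ arises naturally (and sharply) from this method.
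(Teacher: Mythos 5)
Your proposal is correct and is exactly the paper's intended argument: the paper derives Corollary \ref{cor-2} precisely by substituting $\varphi(s)=s^{1-p}$ into Theorem \ref{main-thm-1} and observing that the exponents $\frac{k+1-p}{k}>0$ and $\frac{(1-p)(k-1)}{k}<0$ have opposite signs for $1<p<k+1$ and $k\ge 2$, so that (\ref{1.11}) holds. Your closing remark about the degeneration at $p=1$ and $p=k+1$ is a correct and worthwhile observation about the sharpness of the method.
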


Here are some more examples of functions that satisfy the condition (\ref{1.11}):
\begin{enumerate}[(1)]
    \item $\varphi(s)=\mu s^{\alpha}+s^{\beta}$, where $0\ge \alpha>\beta> -k$ and $\mu\ge0$.
    
    \item $\varphi(s)=(\mu s^{\alpha}+s^{\beta})^{-1}$, where $0\le \alpha<\beta< k$ and $\mu\ge0$.
    
    \item $\varphi(s)=e^{-s}$ with the assumption $0<u<k$.

    \item $\varphi(s)=s^{1-p_1}$, if $0<s\le 1$; $\varphi(s)=s^{1-p_2}$, if $s> 1$, where $1<p_1,p_2<k+1$.
\end{enumerate}


\subsection{Uniqueness of solutions to a class of isotropic curvature problems}$\ $

Let $\psi:(0,\infty)\times(0,\infty)\to (0,\infty)$ be a continuous function, and $l,k$ be two integers with $0\leq l< k \leq n$. We consider the following isotropic curvature problem:
\begin{equation}\label{icp-eq}
    \left(\frac{P_k(W)}{P_l(W)}\right)^{\frac{1}{k-l}}=\psi(u,r)\quad \text{on}\ \mathbb{S}^n.
\end{equation}
After giving new integral formulas involving the radial function, we obtain the following theorem:
\begin{thm}\label{mainthm-2}
Let $n\ge 2$ and $0\le l< k\le n$. Suppose $\psi:(0,\infty)\times(0,\infty)\to (0,\infty)$ is a continuous function, and there exists a $C^1$ decreasing function $\eta:(0,\infty)\to(0,\infty)$ satisfying 
\begin{align}\label{cond-3}
    \left(s\psi(1,1)-\psi(s,t)\right)\left(\eta(1)\psi^{k-l-1}(1,1)-\eta(t)\psi^{k-l-1}(s,t)\right)\le 0,
\end{align}
for any $t> s>0$. Then the smooth strictly convex solution $M$ to (\ref{icp-eq}) must be a sphere. Moreover, if in addition $\eta'(t)<0$ for any $t>0$, then $M$ is an origin-centred sphere.
\end{thm}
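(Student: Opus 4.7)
My strategy is Chern-style: combine Minkowski--Hsiung integral identities on $\mathbb{S}^n$ with the Newton--Maclaurin inequality, using an $\eta(r)$-weighted integral identity to match the algebraic hypothesis~(\ref{cond-3}).

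I begin with the classical identity on $\mathbb{S}^n$,
$$\int_{\mathbb{S}^n} u\,P_{j-1}(W)\,d\sigma = \int_{\mathbb{S}^n} P_j(W)\,d\sigma\quad\text{for }1\le j\le n,$$
which follows from $\sigma_j^{ij}(W)\,W_{ij}=j\,\sigma_j(W)$, the divergence-freeness of the Newton tensor $\sigma_j^{ij}(W)$ on $\mathbb{S}^n$, and integration by parts. The novel ingredient is an $\eta(r)$-weighted identity, which I derive by integrating the divergence of a test vector field of the form $\eta(r)\,\sigma_l^{ij}(W)\,\nabla_i u$ (or a related variant dressed with a power of $\psi$) against $d\sigma$. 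Expanding via $\nabla_j\sigma_l^{ij}(W)=0$, $\sigma_l^{ij}\nabla^2_{ij}u = l\,\sigma_l-(n-l+1)\,u\,\sigma_{l-1}$, and the geometric identity $r\,\nabla_k r = W_{ki}\nabla^i u$ (which follows from $r^2=u^2+|\nabla u|^2$), I obtain a relation between $\int \eta(r)\,P_l(W)\,d\sigma$ and $\int u\,\eta(r)\,P_{l-1}(W)\,d\sigma$ modulo a term carrying the factor $\eta'(r)\le 0$, which will have a definite sign.

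With both identities available, I substitute the PDE in the form $\psi(u,r)^{k-l}\,P_l(W)=P_k(W)$ and invoke the Newton--Maclaurin inequality
$$\frac{P_k(W)}{P_{k-1}(W)}\;\le\;\psi(u,r)\;\le\;\frac{P_{l+1}(W)}{P_l(W)},$$
equality in either implying that $W$ is a scalar multiple of $g_0$, i.e., the point is umbilic. Combining these estimates with the two integral identities, I expect to arrive at an inequality of the form
$$\int_{\mathbb{S}^n}\bigl(u\,\psi(1,1)-\psi(u,r)\bigr)\bigl(\eta(1)\,\psi(1,1)^{k-l-1}-\eta(r)\,\psi(u,r)^{k-l-1}\bigr)\,\Phi(W)\,d\sigma \;\ge\; 0,$$
with a pointwise non-negative weight $\Phi(W)$. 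Hypothesis~(\ref{cond-3}) forces the integrand to be $\le 0$ pointwise, hence the integral vanishes. Tracing back the equality cases in the Newton--Maclaurin step forces $W$ to be a multiple of $g_0$ everywhere on $\mathbb{S}^n$, so $M$ must be a sphere; if furthermore $\eta'<0$ strictly, then the vanishing of the second factor in the integrand forces $u\equiv r$, which characterises origin-centred spheres.

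\textbf{Main obstacle.} The delicate step is engineering the correct test vector field (and the correct pairing with the power $\psi^{k-l-1}$) so that the $\eta'(r)$-term produced by $\sigma_l^{ij}\nabla_i r\,\nabla_j u$, rewritten via $r\nabla r = W\nabla u$, combines cleanly with the slack in Newton--Maclaurin to yield exactly the product structure of~(\ref{cond-3}) and a manifestly non-negative weight $\Phi(W)$. Matching the algebraic bookkeeping between the $\eta$-weighted identity, the $\psi^{k-l-1}$ dressing, and the two-sided Newton--Maclaurin bounds is where the bulk of the computation will lie.
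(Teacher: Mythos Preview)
Your plan is correct and coincides with the paper's argument: the paper derives the $\eta(r)$-weighted Minkowski identity via Chern's differential-form calculus (Lemma~\ref{Lem 2.3-5}) rather than your Newton-tensor divergence, but the resulting formula and the subsequent use of Newton--Maclaurin to reduce to the product in~(\ref{cond-3}) are identical. Two small corrections of detail: the weighted identity should be taken at index $k$ (giving $\int_{\mathbb{S}^n}\eta(r)\bigl(P_k(W)-uP_{k-1}(W)\bigr)\,d\sigma$ with sign controlled by $\eta'\le 0$), to be paired with the unweighted identity $\int_{\mathbb{S}^n}\bigl(P_{l+1}(W)-uP_l(W)\bigr)\,d\sigma=0$; and the origin-centred conclusion under $\eta'<0$ comes from the vanishing of the $\eta'(r)$-remainder term itself (forcing $\nabla u\equiv 0$, hence $r\equiv u$), not from the second factor of the product integrand.
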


Let $\psi(u,r)=u^{\alpha}\phi(r)$, we consider the following equation
\begin{equation}\label{icp-eq-2}
    \left(\frac{P_{k}(W)}{P_{l}(W)}\right)^{\frac{1}{k-l}}=u^{\alpha}\phi(r) \quad \text{on}\ \mathbb{S}^n.
\end{equation}
Note that if $0\le \alpha<1$ and $\phi$ is increasing, then $\psi(u,r)$ satisfies the condition (\ref{cond-3}) with $\eta(t)=\phi^{-\frac{k-l-1}{1-\alpha}}(t)$. Therefore, we obtain

\begin{cor}\label{cor-3}
Let $n\ge 2$ and $0\le l< k\le n$. Suppose $0\le \alpha <1$ is a real number, and $\phi:(0,\infty)\to (0,\infty)$ is a $C^1$ increasing function. Then the smooth strictly convex solution $M$ to (\ref{icp-eq-2}) must be a sphere. Moreover, if in addition $\phi'(t)>0$ for any $t>0$, then $M$ is an origin-centred sphere.
\end{cor}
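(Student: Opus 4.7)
The plan is to deduce Corollary \ref{cor-3} directly from Theorem \ref{mainthm-2} by specializing $\psi(u,r) = u^\alpha \phi(r)$ and choosing $\eta(t) = \phi(t)^{-(k-l-1)/(1-\alpha)}$, as the text already suggests. Since $\phi$ is $C^1$ and non-decreasing and $0 \le \alpha < 1$, the function $\eta$ is $C^1$ and non-increasing, so the regularity and monotonicity hypotheses on $\eta$ in Theorem \ref{mainthm-2} are met. The main technical step is the bookkeeping for the algebraic inequality (\ref{cond-3}), together with handling a small degenerate case $k = l + 1$ in which $\eta$ collapses to a constant and the theorem must be supplemented by hand.

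To verify (\ref{cond-3}), I substitute: the first factor equals $s\phi(1) - s^\alpha\phi(t) = s^\alpha\bigl(s^{1-\alpha}\phi(1) - \phi(t)\bigr)$, while a short computation yields
\[
\eta(1)\psi(1,1)^{k-l-1} - \eta(t)\psi(s,t)^{k-l-1} = \phi(1)^{-\gamma} - s^{\delta}\phi(t)^{-\gamma},
\]
where $\gamma = \alpha(k-l-1)/(1-\alpha)$ and $\delta = \alpha(k-l-1)$. When $\alpha(k-l-1) > 0$, the first factor is positive iff $s > \bigl(\phi(t)/\phi(1)\bigr)^{1/(1-\alpha)}$, whereas the second factor is positive iff $s < \bigl(\phi(t)/\phi(1)\bigr)^{1/(1-\alpha)}$, so their product is $\le 0$ as required. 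When either $\alpha = 0$ or $k = l+1$, the second factor vanishes identically, and (\ref{cond-3}) still holds. Theorem \ref{mainthm-2} then immediately yields that $M$ is a sphere.

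For the strict conclusion under the additional hypothesis $\phi' > 0$: if $k - l - 1 > 0$, then $\eta'(t) = -\tfrac{k-l-1}{1-\alpha}\phi(t)^{-(k-l-1)/(1-\alpha) - 1}\phi'(t) < 0$ on $(0,\infty)$, and the second half of Theorem \ref{mainthm-2} upgrades ``sphere'' to ``origin-centred sphere''. In the remaining case $k = l + 1$, I would argue directly from the PDE: a sphere of radius $R$ centred at $p$ has support function $u(\xi) = R + \langle p, \xi\rangle$ and radial function $r(\xi) = \langle p,\xi\rangle + \sqrt{R^2 - |p|^2 + \langle p,\xi\rangle^2}$, so (\ref{icp-eq-2}) forces $u^\alpha\phi(r) \equiv R$. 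Evaluating at $\xi = \pm p/|p|$ when $p \neq 0$ yields $(R + |p|)^\alpha\phi(R+|p|) = (R - |p|)^\alpha\phi(R-|p|) = R$, which contradicts $\alpha \ge 0$ combined with the strict monotonicity of $\phi$. Hence $p = 0$, completing the proof.
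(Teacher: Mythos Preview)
Your proposal is correct and follows exactly the paper's approach: the paper's entire proof is the one-line remark preceding the corollary, namely that $\psi(u,r)=u^\alpha\phi(r)$ satisfies condition~(\ref{cond-3}) with $\eta(t)=\phi(t)^{-(k-l-1)/(1-\alpha)}$, so Theorem~\ref{mainthm-2} applies. Your verification of (\ref{cond-3}) via the threshold $s=(\phi(t)/\phi(1))^{1/(1-\alpha)}$ is the natural way to unpack that sentence.

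Your explicit treatment of the case $k=l+1$ is a genuine addition: there $\eta\equiv 1$, so $\eta'\equiv 0$ and the second clause of Theorem~\ref{mainthm-2} does not directly yield the origin-centred conclusion; the paper glosses over this. Your direct argument (evaluate the equation at the two antipodal points on the line through the centre) is clean and works. One small remark: the formula you wrote for $r(\xi)$ is the convex-geometry radial function in the \emph{radial} direction $\xi$, whereas in this paper $r$ denotes $|X(\xi)|$, the distance from the origin to the point with \emph{outer normal} $\xi$ (so $r(\xi)=|p+R\xi|$ for a sphere). These two functions differ in general, but they agree at $\xi=\pm p/|p|$, which is all your argument uses, so the conclusion is unaffected.
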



$\ $

On the other hand, Simon \cite{Si67} studied (\ref{icp-eq}) when $\partial_2\psi=0$ and proved some uniqueness theorems, including the classical Alexandrov-Fenchel-Jessen theorem. Inspired by Simon's result, we obtain the following uniqueness theorem for (\ref{icp-eq}):

\begin{thm}\label{mainthm-3}
Let $n\ge 2$ and $0\le l< k\le n$. Suppose $\psi:(0,\infty)\times(0,\infty)\to (0,\infty)$ is a $C^1$ function with $\partial_1\psi\ge 0,\ \partial_2\psi\ge 0$ and at least one of these inequalities is strict. Then the smooth strictly convex solution $M$ to (\ref{icp-eq}) must be an origin-centred sphere.
\end{thm}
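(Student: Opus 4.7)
The plan is to adapt Simon's pointwise maximum principle argument to the two-variable setting, then close with a strong maximum principle for the resulting quasilinear equation.

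First, I would analyse the PDE at the extrema of the support function $u$. Let $x_0,x_1\in\mathbb{S}^n$ be points where $u$ attains $u_{\max}$ and $u_{\min}$ respectively. Since $\nabla u$ vanishes at both, the identity $r=\sqrt{u^2+|\nabla u|^2}$ gives $r(x_0)=u_{\max}$ and $r(x_1)=u_{\min}$; the second-order conditions give the matrix inequalities $W(x_0)\le u_{\max}g_0$ and $W(x_1)\ge u_{\min}g_0$. Because $F:=(P_k/P_l)^{1/(k-l)}$ is monotone increasing on the G\r{a}rding cone $\Gamma_k$ and satisfies $F(\lambda g_0)=\lambda$, substituting into (\ref{icp-eq}) yields
\[
\psi(u_{\max},u_{\max})\le u_{\max},\qquad \psi(u_{\min},u_{\min})\ge u_{\min}.
\]

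Next, the intermediate value theorem applied to $t\mapsto \psi(t,t)-t$ on $[u_{\min},u_{\max}]$ produces some $R\in[u_{\min},u_{\max}]$ with $\psi(R,R)=R$. Then the origin-centred sphere $S_R$ of radius $R$ (with support function $R$, radial function $R$, and $W=Rg_0$) is itself a solution of (\ref{icp-eq}), and it suffices to prove $u\equiv R$.

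To this end, set $v:=u-R$ and subtract the PDEs for $u$ and for $S_R$. Applying the mean value theorem to both $F$ and $\psi$ along the straight-line interpolation gives
\[
\bar F^{ij}\bigl(v_{ij}+v\,g_{ij}\bigr) = b_1 v + b_2(r-R),
\]
with $\bar F^{ij}$ uniformly positive definite (by ellipticity/concavity of $F$ on $\Gamma_k$) and $b_1,b_2\ge 0$ (from monotonicity of $\psi$, with at least one of them genuinely positive by hypothesis). The identity $r-R=\tfrac{u+R}{r+R}\,v+\tfrac{|\nabla v|^2}{r+R}$ turns this into a quasilinear elliptic equation for $v$. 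At a positive interior maximum of $v$ the gradient-squared term vanishes, so Hopf's strong maximum principle --- combined with the strict monotonicity of $\psi$ in at least one variable --- forces $v\equiv 0$. A symmetric argument rules out a negative interior minimum, whence $u\equiv R$ and $M=S_R$ is the origin-centred sphere.

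The hard part will be executing the strong maximum principle in the last step. The zeroth-order coefficient of $v$ in the derived equation, namely $\bar F^{ij}g_{ij}-b_1-b_2\tfrac{u+R}{r+R}$, has no \emph{a priori} definite sign, and the quasilinear $|\nabla v|^2$ term blocks a direct application of the linear Hopf principle. Overcoming this requires carefully leveraging the strict monotonicity hypothesis on $\psi$ --- which becomes the crucial input at an interior extremum of $v$ where $r=u$ --- to produce the rigidity that rules out non-constant $v$.
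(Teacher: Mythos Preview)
Your first two steps are fine: the extremum analysis and the existence of a comparison radius $R$ with $\psi(R,R)=R$ are correct. The genuine gap is exactly where you flag it, and it is not a technicality you can push through with Hopf. Since $F=(P_k/P_l)^{1/(k-l)}$ is $1$-homogeneous and concave on $\Gamma_k$, one has $\bar F^{ij}g_{ij}\ge F(g_0)=1$ at every point, whereas $b_1,b_2$ are merely the partial derivatives of $\psi$ along the interpolation and can be as small as you like (indeed $b_1+b_2<1$ for, say, $\psi(u,r)=u^{\alpha}$ with $0<\alpha<1$). Thus the zeroth-order coefficient $\bar F^{ij}g_{ij}-b_1-b_2\tfrac{u+R}{r+R}$ is typically strictly positive, and no version of the strong maximum principle applies to the equation you wrote for $v$: at a positive maximum the two terms in $\bar F^{ij}v_{ij}+(\bar F^{ij}g_{ij}-b_1-b_2)v=0$ have opposite signs and simply balance each other without contradiction. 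Strict monotonicity of $\psi$ only guarantees $b_1>0$ or $b_2>0$; it does nothing to flip that sign. Already in Simon's one-variable situation $\psi=\psi(u)$ this obstacle is present, which is why his original argument (and the paper's) is \emph{not} a maximum principle.

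The paper proceeds entirely by integral identities, in the spirit of Minkowski formulas. After passing to the curvature picture via $\tilde k=n-k$, $\tilde l=n-l$, $\tilde\psi=\psi^{-1}$ (so $\partial_i\tilde\psi\le 0$), it chains the Newton--MacLaurin inequality, the Minkowski formula $\int_M(P_{m-1}-uP_m)\,d\mu=0$, and an integration by parts using the divergence-free Newton tensor $T^{\tilde k}_{ij}$ to obtain
\[
0\;\le\;\int_M\big(\text{Newton--MacLaurin defect}\big)\,d\mu\;=\;c\int_M T^{\tilde k}_{ij}\,\langle x,e_i\rangle\,\partial_j\big(\tilde\psi(u,r)\big)\Big(\tfrac{P_{\tilde l}}{P_{\tilde k}}\Big)^{\frac{\tilde l-\tilde k-2}{\tilde l-\tilde k}}d\mu\;\le\;0,
\]
the last inequality coming from $\partial_j\tilde\psi=(\kappa_j\partial_1\tilde\psi+r^{-1}\partial_2\tilde\psi)\langle x,e_j\rangle$ and the positivity of $T^{\tilde k}$. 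Equality in Newton--MacLaurin forces umbilicity, and the strict sign of one $\partial_i\tilde\psi$ forces $\langle x,e_i\rangle\equiv 0$, hence an origin-centred sphere. If you want a pointwise route, you would need a genuinely new ingredient (an auxiliary function that absorbs the bad zeroth-order term, or a two-point comparison); as written, your outline does not supply one.
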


When $l=0$, we consider the following isotropic curvature problem:
\begin{equation}\label{icp-eq-3}
    P_k(W)=\psi(u,r) \quad \text{on}\ \mathbb{S}^n.
\end{equation}
Then the following corollary is a direct consequence of Theorem \ref{mainthm-3}.
\begin{cor}\label{cor-4}
Let $n\ge 2$ and $1\le k\le n$. Suppose $\psi:(0,\infty)\times(0,\infty)\to (0,\infty)$ is a $C^1$ function with $\partial_1\psi\ge 0,\ \partial_2\psi\ge 0$ and at least one of these inequalities is strict. Then the smooth strictly convex solution $M$ to (\ref{icp-eq-3}) must be an origin-centred sphere.
\end{cor}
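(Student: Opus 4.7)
The plan is to derive Corollary \ref{cor-4} as the specialization $l=0$ of Theorem \ref{mainthm-3}. First I would invoke the normalization convention $P_0(W)\equiv 1$, so that the equation (\ref{icp-eq-3}), namely $P_k(W)=\psi(u,r)$, is equivalent to
\begin{align*}
\left(\frac{P_k(W)}{P_0(W)}\right)^{\frac{1}{k}}=\psi(u,r)^{\frac{1}{k}}\quad\text{on }\mathbb{S}^n,
\end{align*}
which is exactly the form (\ref{icp-eq}) appearing in Theorem \ref{mainthm-3}, with $l=0$ and with the new nonlinearity $\tilde\psi:=\psi^{1/k}$ in place of $\psi$.

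The remaining task is to verify that $\tilde\psi$ inherits the hypotheses of Theorem \ref{mainthm-3} from $\psi$. Positivity and $C^1$-regularity are immediate, since $s\mapsto s^{1/k}$ is smooth on $(0,\infty)$ and $\psi$ takes values there. For the monotonicity, I would simply differentiate,
\begin{align*}
\partial_i\tilde\psi=\tfrac{1}{k}\,\psi^{\frac{1-k}{k}}\,\partial_i\psi\qquad(i=1,2),
\end{align*}
so that the signs of $\partial_i\tilde\psi$ match those of $\partial_i\psi$. Hence $\partial_1\tilde\psi\ge 0$, $\partial_2\tilde\psi\ge 0$, and at least one of these inequalities is strict wherever the corresponding one for $\psi$ is. Theorem \ref{mainthm-3} then applies to $\tilde\psi$ and forces the strictly convex solution $M$ to be an origin-centred sphere, which is the conclusion of the corollary.

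I do not anticipate any genuine obstacle: the argument is a notational reduction combined with the trivial observation that raising a positive $C^1$ function to a positive power preserves positivity, $C^1$-regularity, and the signs of partial derivatives. The only mild point worth checking is that the clause ``at least one inequality strict'' is transferred correctly, which it is because the multiplier $\frac{1}{k}\psi^{(1-k)/k}$ is strictly positive everywhere on $(0,\infty)\times(0,\infty)$.
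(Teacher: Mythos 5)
Your proposal is correct and takes the same route as the paper, which simply declares Corollary \ref{cor-4} ``a direct consequence of Theorem \ref{mainthm-3}'' without spelling out the substitution. You have correctly identified and filled in the implicit detail that the $l=0$ case of (\ref{icp-eq}) reads $P_k(W)^{1/k}=\psi$, so one must apply Theorem \ref{mainthm-3} to $\tilde\psi=\psi^{1/k}$ rather than $\psi$ itself, and you have verified that passing to the $k$-th root preserves positivity, $C^1$-regularity, and the sign (and strictness) of the partial derivatives because the multiplier $\tfrac{1}{k}\psi^{(1-k)/k}$ is strictly positive.
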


In particular, when $l=0$ and $k=n$, we provide a new proof of the uniqueness result obtained by Ivaki \cite{Iv23} for a large class of Minkowski type problems in the case $n\ge 2$.
\begin{cor}[\cite{Iv23}]\label{cor-5}
Let $n\ge 2$. Suppose $\psi:(0,\infty)\times(0,\infty)\to (0,\infty)$ is a $C^1$ function with $\partial_1\psi\ge 0,\ \partial_2\psi\ge 0$ and at least one of these inequalities is strict. Then the smooth strictly convex solution $M$ to the following isotropic Minkowski type problem
\begin{equation}\label{icp-eq-4}
   \det(W)=\psi(u,r)\quad \text{on}\ \mathbb{S}^n
\end{equation}
must be an origin-centred sphere.
\end{cor}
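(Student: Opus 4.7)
The plan is to obtain Corollary \ref{cor-5} as a direct specialisation of Theorem \ref{mainthm-3} with $l = 0$ and $k = n$. Under the normalisation $P_{k}(W) = \binom{n}{k}^{-1} e_{k}(\lambda_{1},\dots,\lambda_{n})$ used throughout the paper, one has $P_{0}(W) \equiv 1$ and $P_{n}(W) = \det(W)$, so the left-hand side of (\ref{icp-eq}) collapses to $\det(W)^{1/n}$. Raising both sides of $\det(W) = \psi(u,r)$ to the power $1/n$ rewrites the equation as
$$\det(W)^{1/n} = \tilde\psi(u,r), \qquad \tilde\psi := \psi^{1/n},$$
which is precisely (\ref{icp-eq}) with $l = 0$, $k = n$ and driving function $\tilde\psi$.

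Next, I would verify that $\tilde\psi$ inherits the monotonicity hypotheses of Theorem \ref{mainthm-3}. Since $\psi$ is positive and $C^{1}$, so is $\tilde\psi$, and a direct differentiation gives
$$\partial_{i}\tilde\psi = \tfrac{1}{n}\,\psi^{\frac{1}{n}-1}\,\partial_{i}\psi \qquad (i = 1, 2).$$
The prefactor $\tfrac{1}{n}\psi^{1/n - 1}$ is strictly positive, so $\partial_{i}\tilde\psi$ has the same sign as $\partial_{i}\psi$ pointwise. Consequently $\partial_{1}\tilde\psi \ge 0$ and $\partial_{2}\tilde\psi \ge 0$, and whichever inequality for $\psi$ is strict remains strict for $\tilde\psi$. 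Invoking Theorem \ref{mainthm-3} for the pair $(\tilde\psi, k = n, l = 0)$ then yields that any smooth strictly convex solution $M$ of (\ref{icp-eq-4}) must be an origin-centred sphere.

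Beyond the content of Theorem \ref{mainthm-3} itself, there is essentially no obstacle here: the reduction rests only on the two routine observations above, namely that the normalised quotient $(P_{n}/P_{0})^{1/n}$ coincides with $\det(W)^{1/n}$ and that pointwise monotonicity of a positive function is preserved under the map $s \mapsto s^{1/n}$. The $C^{1}$ assumption in Corollary \ref{cor-5} is exactly what guarantees that $\tilde\psi$ is again $C^{1}$, so no additional regularity work is required to pass to the hypotheses of Theorem \ref{mainthm-3}.
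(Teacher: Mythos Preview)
Your proposal is correct and follows exactly the paper's approach: Corollary~\ref{cor-5} is simply the special case $l=0$, $k=n$ of Theorem~\ref{mainthm-3} (equivalently, the case $k=n$ of Corollary~\ref{cor-4}), and the passage from $\det(W)=\psi$ to $(P_n/P_0)^{1/n}=\psi^{1/n}$ together with the preservation of the sign conditions under $s\mapsto s^{1/n}$ is precisely the routine verification the paper leaves implicit.
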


As applications, we can immediately deduce the following corollaries:
\begin{cor}\label{cor-6}
Let $n\ge 2$ and $1\le k\le n$. Suppose $p\ge 1,\ q\le k+1$ and at least one of these inequalities is strict. Then the smooth strictly convex solution $M$ to the following isotropic $L_p$ dual Christoffel-Minkowski problem 
\begin{equation}\label{i-pq-CM}
    u^{1-p} r^{q-k-1}P_k(W)=1\quad \text{on}\ \mathbb{S}^n
\end{equation}
must be an origin-centred sphere.
\end{cor}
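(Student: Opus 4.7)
The plan is to recast the equation \eqref{i-pq-CM} into the form covered by Corollary~\ref{cor-4} and then verify the hypotheses. Solving for $P_k(W)$ gives
\[
P_k(W) = u^{p-1}\, r^{k+1-q},
\]
so I would set $\psi(u, r) := u^{p-1} r^{k+1-q}$ for $(u, r) \in (0,\infty) \times (0,\infty)$. Since $M$ is smooth, strictly convex, and encloses the origin in its interior, both $u$ and $r$ are strictly positive on $\mathbb{S}^n$, so $\psi$ is $C^1$ on the relevant domain and the equation \eqref{i-pq-CM} is genuinely of the form treated in Corollary~\ref{cor-4}.

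Next I would compute the two partial derivatives:
\[
\partial_1 \psi(u,r) = (p-1)\, u^{p-2} r^{k+1-q}, \qquad \partial_2 \psi(u,r) = (k+1-q)\, u^{p-1} r^{k-q}.
\]
The assumption $p \ge 1$ yields $\partial_1 \psi \ge 0$, while $q \le k+1$ yields $\partial_2 \psi \ge 0$. The hypothesis that at least one of $p \ge 1$, $q \le k+1$ is strict translates to either $p > 1$, in which case $\partial_1 \psi > 0$ on all of $(0,\infty)^2$, or $q < k+1$, in which case $\partial_2 \psi > 0$ on all of $(0,\infty)^2$. Either way, the sign and strictness conditions required by Corollary~\ref{cor-4} are satisfied on the full domain of $\psi$.

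Having verified the hypotheses, I would invoke Corollary~\ref{cor-4} to conclude that $M$ must be an origin-centred sphere. There is no genuine obstacle here: the statement is a direct specialization of Corollary~\ref{cor-4} (which in turn is a consequence of Theorem~\ref{mainthm-3}), and the only work is the elementary bookkeeping above to match the signs of the exponents $p-1$ and $k+1-q$ with the sign conditions on $\partial_1 \psi$ and $\partial_2 \psi$ demanded by the earlier corollary.
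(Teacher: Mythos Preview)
Your proposal is correct and matches the paper's approach exactly: the paper states this corollary immediately after Corollary~\ref{cor-4} with the remark ``As applications, we can immediately deduce the following corollaries,'' and the intended deduction is precisely the specialization $\psi(u,r)=u^{p-1}r^{k+1-q}$ together with the sign check you wrote out.
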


\begin{rem}
When $k=n$, Corollary \ref{cor-6} reduces to the uniqueness theorem of the isotropic $L_p$ dual Minkowski problem (\ref{p,q-M-eq}) for $p>1$ and $q\le n+1$, as shown by Chen-Li \cite{CL21}.
\end{rem}

\begin{cor}\label{cor-7}
Let $n\ge 2$ and $1\le k\le n$. Suppose $p\ge 1$ is a real number. Then the smooth strictly convex solution $M$ to the following isotropic $L_p$ Gaussian Christoffel-Minkowski problem
\begin{equation}\label{i-p-GCM}
    u^{1-p} e^{-\frac{r^2}{2}} P_k(W)=1\quad \text{on}\ \mathbb{S}^n
\end{equation}
must be an origin-centred sphere.
\end{cor}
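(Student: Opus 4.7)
The plan is to recognize this as a direct application of Corollary \ref{cor-4}. First I would rewrite the equation \eqref{i-p-GCM} in the form $P_k(W) = \psi(u,r)$ by isolating $P_k(W)$, which yields
\begin{equation*}
    \psi(u,r) = u^{p-1} e^{\frac{r^2}{2}}.
\end{equation*}
Since $M$ encloses the origin in its interior, both $u$ and $r$ are strictly positive on $\mathbb{S}^n$, so $\psi$ is a well-defined positive $C^1$ function on $(0,\infty)\times(0,\infty)$.

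Next I would verify the monotonicity hypotheses of Corollary \ref{cor-4}. A direct computation gives
\begin{equation*}
    \partial_1 \psi(u,r) = (p-1)\, u^{p-2} e^{\frac{r^2}{2}}, \qquad \partial_2 \psi(u,r) = r\, u^{p-1} e^{\frac{r^2}{2}}.
\end{equation*}
The assumption $p \ge 1$ ensures $\partial_1 \psi \ge 0$, and since $r > 0$ we have $\partial_2 \psi > 0$ strictly on the relevant domain. Thus both inequalities required by Corollary \ref{cor-4} hold, with the second being strict.

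I would then simply invoke Corollary \ref{cor-4} to conclude that every smooth strictly convex solution $M$ to \eqref{i-p-GCM} must be an origin-centred sphere. There is no substantive obstacle here; the entire content of the corollary is that the Gaussian weight $e^{-r^2/2}$ and the power weight $u^{1-p}$ (for $p \ge 1$) fit into the framework of admissible right-hand sides $\psi(u,r)$ set up in Theorem \ref{mainthm-3}. The only thing worth remarking is that strict monotonicity in the $r$-variable alone is enough, so the borderline case $p = 1$ (the classical Gaussian Christoffel--Minkowski problem) is covered without any additional argument.
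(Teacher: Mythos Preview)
Your proposal is correct and is exactly the approach the paper intends: Corollary~\ref{cor-7} is stated immediately after Corollary~\ref{cor-4} under the heading ``As applications, we can immediately deduce the following corollaries,'' and the verification that $\psi(u,r)=u^{p-1}e^{r^2/2}$ satisfies $\partial_1\psi\ge 0$, $\partial_2\psi>0$ for $p\ge 1$ is precisely the intended (and only) content.
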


\begin{rem}
Corollary \ref{cor-7} provides an affirmative answer to the conjecture proposed by Chen-Hu-Liu-Zhao \cite{CHLZ23}, and another proof can be found in \cite{Iv23}.
\end{rem}


\subsection{Uniqueness of solutions to $L_p$ dual Minkowski problems}$\ $

The $L_p$ dual Minkowski problem was posed by Lutwak, Yang and Zhang in \cite{LYZ18}, and unifies the $L_p$ Minkowski problem and the dual Minkowski problem. The $L_p$ dual Minkowski problem has been intensively studied by many authors, see e.g. \cites{BF19, CCL21, CHZ19, CL21, HZ18, LW22, LLL22, SX21}. In the smooth case, given a positive continuous function $f$ on $\mathbb{S}^{n}$, the $L_p$ dual Minkowski problem becomes the following Monge-Amp\`ere type equation:
\begin{equation}\label{an-p,q-M-eq}
    u^{1-p}r^{q-n-1}\det(W)=f\quad \text{on}\ \mathbb{S}^n.
\end{equation}

For a class of convex bodies that are close to the ball, by applying integral formulas involving the radial function, we obtain the uniqueness of solutions to $L_p$ dual Minkowski problems for $p=2$ and $q\le n+1$.

\begin{thm}\label{mainthm-5}
Let $n\ge 2$. Suppose $p=2$ and $q\le n+1$. For any positive function $f\in C(\mathbb{S}^n)$, then the smooth strictly convex solution $M$ to (\ref{an-p,q-M-eq}) satisfying $\frac{r}{u}\le \sqrt{2}$ is unique, except when $p=q$, in which case it is unique up to scaling.
\end{thm}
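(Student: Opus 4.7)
The plan is to assume two distinct smooth strictly convex solutions $u_1, u_2$ to (\ref{an-p,q-M-eq}) with $p=2$, both satisfying $r_i/u_i \le \sqrt{2}$ and producing the same right-hand side $f$, and to deduce that they must coincide (or differ by scaling when $q = 2$) from an integral identity combined with a dual Brunn--Minkowski type inequality that is valid precisely under the ball-closeness constraint.

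First I would rewrite the $p=2$ case as $r^{q-n-1}\det(W) = u f$. Testing the equation for $u_1$ against $u_2$ and then symmetrically swapping the roles produces the identity
\[
\int_{\mathbb{S}^n} u_2\, r_1^{q-n-1}\det(W_1)\, d\sigma \;=\; \int_{\mathbb{S}^n} u_1 u_2\, f\, d\sigma \;=\; \int_{\mathbb{S}^n} u_1\, r_2^{q-n-1}\det(W_2)\, d\sigma,
\]
which is the $(p,q)$-dual analogue of the key mixed-volume identity driving the classical uniqueness proof of the $L^2$ Minkowski problem. The two outer integrals represent $(p,q)$-dual mixed volumes $\widetilde V_{p,q}(K_1,K_2)$ and $\widetilde V_{p,q}(K_2,K_1)$, and the coincidence of these two quantities is the leverage point for the argument.

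Next I would invoke a one-sided inequality of dual Brunn--Minkowski type comparing $\widetilde V_{p,q}(K_i,K_j)$ with the one-body quantity $\widetilde V_{p,q}(K_i)=\widetilde V_{p,q}(K_i,K_i)$, together with its equality characterization. Since no such inequality holds across all convex bodies when $q < p = 2$, the hypothesis $r/u \le \sqrt{2}$ (equivalently $|\nabla u|^2 \le u^2$) enters precisely at this point: the idea is to use $r^2 = u^2 + |\nabla u|^2$ to expand $r^{q-n-1}$ around $u^{q-n-1}$, to absorb the discrepancy via $|\nabla u|^2 \le u^2$, and then to appeal to the Alexandrov--Fenchel inequality for $\det(W)$ reinforced by the Cauchy--Schwarz structure arising from $p = 2$. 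Applying the resulting inequality in both directions of the symmetric identity forces simultaneous equality, and the equality characterization then yields $u_1 \equiv u_2$ when $q \ne 2$, while only the one-parameter family of dilations survives when $p = q = 2$.

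The hard part will be justifying this constrained dual Brunn--Minkowski step and extracting the sharp equality case. Since $q - n - 1 \le 0$, the weight $r^{q-n-1}$ is a decreasing function of $r$, and the bound $r \le \sqrt{2}\,u$ is exactly what is needed to produce the correct sign when comparing cross-terms of the form $r_1^{q-n-1} u_2$ with $u_1^{q-n-1} u_2$ inside the integral. A delicate point is that when $p = q = 2$ the equation is invariant under dilations, so the argument must correctly capture that the equality case yields a full one-parameter family of scalings, whereas for $q \ne 2$ the broken homogeneity between $u^{1-p}$ and $r^{q-n-1}$ pins down a unique solution. I expect that the interplay between the factor $u^{-1}$, the radial weight $r^{q-n-1}$, and the pointwise pinch $r \le \sqrt{2}\, u$ is exactly what allows the classical $L^2$ Minkowski argument to survive the insertion of the radial weight.
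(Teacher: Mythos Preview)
Your symmetric identity
\[
\int_{\mathbb{S}^n} u_2\, r_1^{q-n-1}\det(W_1)\, d\sigma \;=\; \int_{\mathbb{S}^n} u_1 u_2\, f\, d\sigma \;=\; \int_{\mathbb{S}^n} u_1\, r_2^{q-n-1}\det(W_2)\, d\sigma
\]
is fine, but the step after it is a genuine gap. You appeal to ``a one-sided inequality of dual Brunn--Minkowski type comparing $\widetilde V_{p,q}(K_i,K_j)$ with $\widetilde V_{p,q}(K_i)$'' without stating it, and your proposed mechanism for the constraint---expand $r^{q-n-1}$ about $u^{q-n-1}$ and absorb the remainder using $|\nabla u|^2\le u^2$---does not produce such an inequality. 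The integrals $\int u_j r_i^{q-n-1}\det(W_i)\,d\sigma$ mix the radial weight of one body with the support function of the other, and there is no off-the-shelf $(p,q)$-dual Minkowski inequality that applies to this pairing; the constraint $r/u\le\sqrt 2$ does not obviously repair this, and your sketch gives no computation showing why $\sqrt 2$ (rather than some other constant) is the correct threshold.

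The paper's argument is structurally different. It does not pass through mixed volumes at all; instead it applies the weighted Chern-type integral formulas (Lemma~\ref{Lem 2.3-4}) with weight $\eta(t)=t^{q-n-1}$ to the pair $(M,\bar M)$, combines them with the G{\aa}rding inequality $P_{k,l}(W,\bar W)\ge P_{k+l}(W)^{k/(k+l)}P_{k+l}(\bar W)^{l/(k+l)}$ and Cauchy--Schwarz on the tangential components $\tau_i,\bar\tau_i$, and arrives at an inequality (equation~(\ref{3.4})) whose two sides carry opposite signs. The threshold $\sqrt 2$ enters for a very concrete reason that your outline misses: after the specialization $p=2$, $\alpha=q-n-1$, the right-hand side contains the factor
\[
\bigl(\bar\xi^{-2}\sqrt{\bar\xi^2-1}-\xi^{-2}\sqrt{\xi^2-1}\bigr)\bigl(\sqrt{\bar\xi^2-1}-\sqrt{\xi^2-1}\bigr),\qquad \xi=\tfrac{r}{u},\ \bar\xi=\tfrac{\bar r}{\bar u},
\]
and one needs this product to be nonnegative. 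That holds precisely because $g(t)=t^{-2}\sqrt{t^2-1}$ is monotone increasing on $[1,\sqrt 2]$ (and not beyond, since $g'(\sqrt 2)=0$). This is where the hypothesis $r/u\le\sqrt 2$ is actually used, and it has nothing to do with expanding $r^{q-n-1}$ around $u^{q-n-1}$.
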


\begin{rem}
If a convex body $K$ satisfies $R B^{n+1}\subseteq K \subseteq \sqrt{2}R B^{n+1}$ for some $R>0$, where $B^{n+1}$ denotes the Euclidean unit ball in $\mathbb{R}^{n+1}$, then its boundary $M=\partial K$ satisfies $\frac{r}{u}\le \sqrt{2}$. Consequently, the uniqueness of the $L_2$ dual Minkowski problem with $q\le n+1$ in $\mathbb{R}^{n+1}$ is guaranteed for these convex bodies.
\end{rem}


$\ $

When $f\equiv 1$ and $q=n+1$, (\ref{an-p,q-M-eq}) reduces to the isotropic $L_p$ Minkowski problem:
\begin{equation}\label{is-p-M-eq}
    u^{1-p}\det(W)=1\quad \text{on}\ \mathbb{S}^n
\end{equation}
It has been established, due to the works of Lutwak \cite{Lut93}, Andrews \cite{And99}, Andrews-Guan-Ni \cite{AGN16}, Choi-Daskalopoulos \cite{CD16}, as well as Brendle-Choi-Daskalopoulos \cite{BCD17}, that for $p>-n-1$, the only solutions $M$ to (\ref{is-p-M-eq}) are origin-centered spheres, while for $p=-n-1$, the solutions $M$ are origin-centered ellipsoids. Moreover, Saroglou \cite{Sa22}  obtained uniqueness results for a class of isotropic Orlicz-Minkowski problems, including (\ref{is-p-M-eq}) for $p>-n-1$. By using a local version of the Brunn-Minkowski inequality, Ivaki and Milman \cite{IM23} provided a new proof of the uniqueness of solutions to (\ref{is-p-M-eq}) for $-n-1\le p\le -1$ and $p=0$.

As for the uniqueness of solutions to the isotropic $L_p$ dual Minkowski problem (\ref{p,q-M-eq}), it is known that the solution is unique when $q<p$ \cite{HZ18} and is unique up to scaling when $p=q$ \cite{CL21}, which can be obtained by the strong maximum principle. When $1<p<q\le n+1$ or $-n-1\le p<q<-1$, Chen-Li \cite{CL21} showed that the solution must be an origin-centred sphere. In the planar case $n=1$, we \cite{LW22} provided a complete classification of solutions to (\ref{p,q-M-eq}). When $-n-1\le p< q\le\min\{n+1,n+1+p\}$, Chen-Huang-Zhao \cite{CHZ19} proved that the origin-symmetric solution must be an origin-centred sphere. Furthermore, Ivaki and Milman \cite{IM23} proved the uniqueness of origin-symmetric solutions to (\ref{p,q-M-eq}) for $p\ge -n-1$ and $q\le n+1$ with at least one of these being strict.

In this paper, by selecting test functions in a local version of the Brunn-Minkowski inequality, we obtain some new uniqueness results for the isotropic $L_p$ dual Minkowski problem. 

\begin{thm}\label{mainthm-6}
Let $n\ge 1$. Suppose either
\begin{enumerate}[(i)]
	\item \label{thm-6i} $p\ge -n-1,\ q\le n-1+2\sqrt{1+\frac{n+1+p}{n+2}}$ and at least one of these is strict, or
	\item \label{thm-6ii} $q\le n+1,\ p\ge -n+1-2\sqrt{1+\frac{n+1-q}{n+2}}$ and at least one of these is strict.
\end{enumerate} 
Then the smooth strictly convex origin-symmetric solution $M$ to (\ref{p,q-M-eq}) must be an origin-centred sphere.
\end{thm}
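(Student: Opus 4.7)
The plan is to follow the local Brunn--Minkowski strategy of Ivaki--Milman \cite{IM23}. Let $K$ be a smooth strictly convex origin-symmetric solution to (\ref{p,q-M-eq}) with support function $u$ and radial function $r$, so that $\det(W) = u^{p-1} r^{n+1-q}$ pointwise on $\mathbb{S}^{n}$. The main input is the local Brunn--Minkowski inequality for an origin-symmetric $C^{2}$ convex body: for every even $\phi \in C^{2}(\mathbb{S}^{n})$,
\begin{equation*}
n \int_{\mathbb{S}^{n}} \phi^{2} \det(W)\, d\sigma \le \int_{\mathbb{S}^{n}} \phi\, \tr\!\bigl(\mrm{cof}(W)(\nabla^{2}\phi+\phi g_{0})\bigr) d\sigma + n\, \frac{\bigl(\int_{\mathbb{S}^{n}} \phi\, u\, \det(W)\, d\sigma\bigr)^{2}}{\int_{\mathbb{S}^{n}} u\, \det(W)\, d\sigma},
\end{equation*}
which is the spectral gap of the Hilbert--Brunn--Minkowski operator on even functions, with equality iff $K$ is a ball.

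For case (i), I would test this inequality against a two-parameter family of origin-even functions $\phi = \alpha\, u + \beta\, r^{2}/u$, spanning the degree-$0$ and degree-$2$ spherical-harmonic directions relative to $K$. Substituting $\det(W) = u^{p-1} r^{n+1-q}$ and using $r^{2} = u^{2} + |\nabla u|^{2}$ together with the identity $\tr(\mrm{cof}(W)\, W) = n \det(W)$ and the divergence-free property $\nabla_{i}\, \mrm{cof}(W)^{ij} = 0$, every integral in the inequality would be reduced to linear combinations of moments of the form $\int u^{a} r^{b}\, d\sigma$. The inequality would then collapse to a statement $Q_{p,q}(\alpha, \beta) \le 0$ for a symmetric quadratic form $Q_{p,q}$ whose discriminant, after simplification, matches $(q-n+1)^{2} - 4\bigl(1+\tfrac{n+1+p}{n+2}\bigr)$. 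Under hypothesis (i) this discriminant is nonpositive, with strict inequality whenever $p > -n-1$ or $q < n-1 + 2\sqrt{1+(n+1+p)/(n+2)}$, so $Q_{p,q}$ is negative semi-definite and the only saturating direction is $\beta = 0$; this forces the degree-$2$ harmonic component of $u$ to vanish, and then a standard argument (either the trace via $\phi \equiv 1$ or the strict inequality) yields that $K$ is an origin-centred ball.

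For case (ii), I would run the analogous argument with a dual choice of test function, namely $\phi = \alpha\, u + \beta\, u\, r^{s}$ for an exponent $s$ tuned so that the resulting quadratic form has discriminant $(p+n-1)^{2} - 4\bigl(1+\tfrac{n+1-q}{n+2}\bigr)$. Equivalently, one can apply the same local Brunn--Minkowski inequality to the polar body $K^{\ast}$: polarity interchanges support and radial functions on antipodal directions, converting equation (\ref{p,q-M-eq}) for $K$ into an equation of the same form for $K^{\ast}$ in which the parameter $q$ plays the role of $p$. Hypothesis (ii) is the image of hypothesis (i) under this exchange, so the argument of case (i) applied to $K^{\ast}$ gives the conclusion.

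The main obstacle is identifying the sharp test function and carrying out the reduction to the quadratic form $Q_{p,q}$ with exactly the stated discriminant. Controlling the cross-term $\int \phi\, u\, \det(W)\, d\sigma$ and performing the integration by parts for $r^{2}/u$ against $\mrm{cof}(W)$ is delicate, because neither $u^{-1}$ nor $r^{2}$ is a linear function on $\mathbb{S}^{n}$; the sharp constant $2\sqrt{1+(n+1+p)/(n+2)}$ reflects the lowest nonzero even eigenvalue $2(n+1)$ of $-\Delta_{\mathbb{S}^{n}}$ augmented by a correction of order $(n+2)^{-1}$ coming from the mean-zero orthogonality term in the local Brunn--Minkowski inequality. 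A subsidiary difficulty is tracking the boundary cases in ``at least one is strict'': equality in the discriminant must be matched to equality in the Brunn--Minkowski step, where origin-symmetric extremisers are in principle also origin-centred ellipsoids, and one checks that this extra degeneracy is ruled out by the $\beta = 0$ constraint.
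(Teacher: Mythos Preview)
Your proposal correctly identifies the starting point (the spectral form of the local Brunn--Minkowski inequality) and the use of polar duality for case (ii), but the mechanism you describe is not the one that actually yields the stated bound, and as written it does not go through.

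The paper does \emph{not} test with scalar even functions of the form $\phi=\alpha u+\beta r^{2}/u$. Instead it applies the spectral inequality to the $(n{+}1)$ odd functions $f_{l}=|X|^{\alpha}\langle X,E_{l}\rangle$ (automatically mean-zero by origin-symmetry), sums over $l$, and obtains Lemma~\ref{Lem 2.4-2}. Substituting $\sigma_{n}=u^{p-1}|X|^{\,n+1-q}$ and integrating by parts twice produces an inequality of the form
\[
0\;\le\;A(\alpha)\!\int|X|^{2\alpha-2}|\nabla u|^{4}\,dV_{n}\;+\;B(\alpha)\!\int|X|^{2\alpha-2}u^{2}|\nabla u|^{2}\,dV_{n},
\]
and the single parameter $\alpha$ is chosen as $\alpha=\sqrt{\,n+1-q+\frac{n+1+p}{n+2}\,}$ to make $B(\alpha)=0$. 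The bound $q\le n-1+2\sqrt{1+\frac{n+1+p}{n+2}}$ in the theorem is exactly the condition $\alpha+\tfrac{n+1-q}{2}\ge 0$ needed for the sign in the second integration by parts (estimate~(5.4)) to go the right way; that is where the threshold comes from, not from a discriminant of a quadratic form in two test-function coefficients.

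Your two-parameter family collapses: the $\alpha u$ summand is precisely the zeroth eigenfunction and is swallowed by the mean-correction term, so effectively you are testing with the single function $r^{2}/u$, which leaves no parameter to optimize. Your stated equality case (``equality iff $K$ is a ball'') is also incorrect for the inequality as you wrote it --- equality holds whenever $\phi$ differs from a multiple of $u$ by a linear function $\langle z,v\rangle$, regardless of $K$. Finally, the eigenvalue $2(n{+}1)$ of $-\Delta_{\mathbb{S}^{n}}$ plays no role here: the relevant operator is the Hilbert--Brunn--Minkowski operator of $K$, and only its first nonzero eigenvalue $n$ is used. The discriminant expression you wrote matches the theorem by reverse-engineering, but you have not shown it emerges from your test function; the missing computation you flag as ``the main obstacle'' is in fact the entire proof.
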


\begin{rem}
In fact, the uniqueness of origin-symmetric solutions to the isotropic $L_p$ Christoffel-Minkowski problem (\ref{i-pq-CM}) was also investigated in \cite{IM23}. By modifying the test functions in \cite[Lemma 3.1]{IM23} for $k<n$, we obtain some results similar to Theorem \ref{mainthm-6}, see Appendix \ref{appendix}.
\end{rem}

When $p\ge -n-1$ and $q\le n+1$, Theorem \ref{mainthm-6} reduces to the following result of Ivaki and Milman.

\begin{cor}[\cite{IM23}]\label{cor-8}
Let $n\ge 1$. Suppose $p\ge -n-1$, $q\le n+1$ and at least one of these inequalities is strict. Then the smooth strictly convex origin-symmetric solution $M$ to the isotropic $L_p$ dual Minkowski problem (\ref{p,q-M-eq}) must be an origin-centred sphere.  
\end{cor}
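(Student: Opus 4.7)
The plan is to deduce Corollary \ref{cor-8} directly from Theorem \ref{mainthm-6} by verifying that its hypotheses always force the hypotheses of case (\ref{thm-6i}) of that theorem. Under $p\ge -n-1$ and $q\le n+1$, I would first compare the two upper bounds on $q$: since $\frac{n+1+p}{n+2}\ge 0$, one has
\begin{equation*}
n-1+2\sqrt{1+\frac{n+1+p}{n+2}}\;\ge\; n-1+2\;=\;n+1\;\ge\; q,
\end{equation*}
so the numerical bound required by case (\ref{thm-6i}) holds automatically.

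The only subtlety is transferring the strictness condition. If $p>-n-1$, then $\sqrt{1+\frac{n+1+p}{n+2}}>1$ and the middle inequality of the chain above becomes strict, yielding $q<n-1+2\sqrt{1+\frac{n+1+p}{n+2}}$ as required. If instead $p=-n-1$, the ``at least one strict'' assumption of Corollary \ref{cor-8} forces $q<n+1$, and the final inequality in the chain is strict. In either case the hypothesis of case (\ref{thm-6i}) of Theorem \ref{mainthm-6} is met with the $q$-bound strict, so Theorem \ref{mainthm-6} concludes that the only smooth strictly convex origin-symmetric solution $M$ to (\ref{p,q-M-eq}) is an origin-centred sphere. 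A completely symmetric verification through case (\ref{thm-6ii}), swapping the roles of $p$ and $q$, works equally well and could be used as a cross-check.

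The main obstacle in reaching this corollary is therefore not the reduction itself, which is a routine numerical check, but rather Theorem \ref{mainthm-6}, whose proof---following the strategy of Ivaki--Milman \cite{IM23}---requires inserting carefully tailored test functions into a local Brunn--Minkowski inequality so that the resulting quadratic form in the parameters $(p,q)$ is negative semidefinite precisely up to the square-root threshold. The improvement of Theorem \ref{mainthm-6} over the rectangle $\{p\ge -n-1,\ q\le n+1\}$ comes from this sharper choice of test functions, but for the purposes of Corollary \ref{cor-8} only the looser rectangular region is needed, which is exactly what the comparison above provides.
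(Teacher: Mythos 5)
Your reduction is correct and is exactly how the paper obtains this corollary: it is stated as the special case of Theorem \ref{mainthm-6} on the rectangle $\{p\ge -n-1,\ q\le n+1\}$, and your numerical check that $n-1+2\sqrt{1+\frac{n+1+p}{n+2}}\ge n+1$ together with the case analysis on strictness correctly transfers the hypotheses to case (\ref{thm-6i}). (Internally, the paper's proof of Theorem \ref{mainthm-6} handles this rectangular region with the test-function parameter $\alpha=0$, which recovers the Ivaki--Milman argument directly.)
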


In the planar case $n=1$, combining Theorem \ref{mainthm-6} and \cite[Theorem 1.2]{LW22} immediately yields the following corollary.
\begin{cor}\label{cor-9}
Let $n=1$. If $q>2$, $p+3<q\le 2\sqrt{\frac{5+p}{3}}$; or $p<-2$, $-2\sqrt{\frac{5-q}{3}}\le p < q-3$, then the embedded solution to the planar isotropic $L_p$ dual Minkowski problem is unique.
\end{cor}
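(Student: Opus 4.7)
The plan is to combine the uniqueness of origin-symmetric solutions from Theorem \ref{mainthm-6} with the complete classification of planar embedded solutions to (\ref{p,q-M-eq}) proved in \cite[Theorem 1.2]{LW22}. The idea is that Theorem \ref{mainthm-6} eliminates all origin-symmetric competitors, while the classification result rules out, in the prescribed parameter windows, every non-origin-symmetric embedded competitor; since the two steps together cover every embedded candidate, uniqueness follows.

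First I would specialise Theorem \ref{mainthm-6} to $n=1$. Case (i) reads: if $p\ge -2$, $q\le 2\sqrt{(5+p)/3}$, with at least one inequality strict, then every smooth strictly convex origin-symmetric solution is an origin-centred circle; case (ii) is the dual statement with the roles of $p$ and $q$ exchanged, giving the same conclusion when $q\le 2$, $p\ge -2\sqrt{(5-q)/3}$ and at least one is strict. In the first regime of Corollary \ref{cor-9}, the requirement $q>2$ together with $q\le 2\sqrt{(5+p)/3}$ forces $p>-2$ strictly, so Theorem \ref{mainthm-6}(i) applies. In the second regime, $p<-2$ together with $p\ge -2\sqrt{(5-q)/3}$ forces $q<2$ strictly, so Theorem \ref{mainthm-6}(ii) applies. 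In either case, every origin-symmetric embedded solution must be an origin-centred circle.

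Next I would invoke \cite[Theorem 1.2]{LW22}, whose classification of embedded planar solutions identifies precisely the ranges of $(p,q)$ in which non-origin-symmetric embedded solutions can occur. The gap condition $q-p>3$ (present as $p+3<q$ in the first regime of Corollary \ref{cor-9} and as $p<q-3$ in the second) places $(p,q)$ outside that range, so the classification forces any embedded solution to be origin-symmetric. Combined with the preceding step, this yields the desired uniqueness: the only embedded solution under either assumption is the origin-centred circle.

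The substantive check, as opposed to routine manipulation, is that the two explicit parameter windows in Corollary \ref{cor-9} lie simultaneously inside the applicability range of Theorem \ref{mainthm-6}(i)/(ii) and inside the ``no non-symmetric embedded solutions'' region furnished by \cite[Theorem 1.2]{LW22}. This is precisely what the bounds $q\le 2\sqrt{(5+p)/3}$ and $p+3<q$, together with their duals, are arranged to guarantee, after which the corollary follows immediately, consistent with the authors' description of it as a direct consequence.
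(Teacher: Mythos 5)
Your proposal follows exactly the paper's (one-line) argument: specialize Theorem \ref{mainthm-6} to $n=1$ --- your computation of the bounds $2\sqrt{\frac{5+p}{3}}$ and $-2\sqrt{\frac{5-q}{3}}$, and the observation that $q>2$ together with $q\le 2\sqrt{\frac{5+p}{3}}$ forces $p>-2$ (resp.\ that $p<-2$ forces $q<2$ in the dual regime), so that the ``at least one strict'' hypothesis holds, are all correct --- and then invoke \cite[Theorem 1.2]{LW22} to exclude non-origin-symmetric embedded competitors under the gap condition $q-p>3$. The one caveat is that you use the classification of \cite{LW22} as a black box without quoting the precise statement that guarantees every embedded solution is origin-symmetric in this parameter window, but the paper itself offers no more detail, declaring the corollary an immediate consequence of combining the two results.
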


As for the case without the origin-symmetric assumption, we have the following result.

\begin{thm}\label{mainthm-7}
Let $n\ge 1$. Suppose either
\begin{enumerate}[(i)]
	\item \label{thm-7i} $-n-1<p\le -1$, $n+1\le q\le n+\frac{1}{2}+\sqrt{\frac{1}{4}-\frac{(1+p)(n+1+p)}{n(n+2)}}$, or 
	\item \label{thm-7ii} $1\le q< n+1$, $-n-\frac{1}{2}-\sqrt{\frac{1}{4}-\frac{(1-q)(n+1-q)}{n(n+2)}}\le p\le -n-1$.
\end{enumerate} 
Then the smooth strictly convex solution $M$ to (\ref{p,q-M-eq}) must be an origin-centred sphere.  
\end{thm}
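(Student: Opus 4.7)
The plan is to adapt the method of Ivaki and Milman \cite{IM23} by enlarging their class of test functions in the local $L_p$-Brunn-Minkowski inequality to include first spherical harmonics, thereby removing the origin-symmetry hypothesis at the cost of a more restrictive range of $(p,q)$. Case (ii) should follow from a parallel computation with analogous choices of test functions, so I concentrate on case (i).

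The first step is to record, for any smooth strictly convex solution $M=\partial K$ of $u^{1-p}r^{q-n-1}\det(W)=1$ on $\mathbb{S}^n$, the local Brunn-Minkowski-type inequality used in \cite{IM23}: for every smooth $\varphi$ on $\mathbb{S}^n$,
\[
\mathcal{I}(\varphi)\,:=\,\int_{\mathbb{S}^n}\bigl(\varphi\,L\varphi\bigr)\,d\mu\;\ge\;0,
\]
where $d\mu$ is the surface area measure of $K$ and $L$ is an explicit second-order operator involving $W^{-1}$, $u$, $r$ and the parameters $p,q$. I would then plug in $\varphi = \alpha u + \beta\langle x,e\rangle$ for a fixed unit vector $e\in\mathbb{R}^{n+1}$ and scalars $\alpha,\beta$; the first ingredient spans the scaling variation used by Ivaki-Milman, while the second is a first spherical harmonic capturing translation. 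Using $\nabla^2 u + u g_0 = W$, $\nabla^2\langle x,e\rangle + \langle x,e\rangle g_0 = 0$, and $|\nabla u|^2 = r^2 - u^2$, all integrals reduce to scalar invariants of $M$ that can either be computed from the self-similar equation or vanish after averaging over an orthonormal basis $\{e_i\}$.

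After these cancellations, $\mathcal{I}$ becomes a quadratic form $\mathcal{Q}(\alpha,\beta)$. Non-negativity of $\mathcal{Q}$ is equivalent, after elementary algebra, to non-positivity of the discriminant
\[
n(n+2)(q-n)(q-n-1) + (1+p)(n+1+p),
\]
which rearranges precisely to the bound stated in part (i) of the theorem; strictness of at least one defining inequality forces $\mathcal{Q}$ to be strictly positive definite, so that the only deformations preserving $\mathcal{I}=0$ are trivial. Tracing back through the equality case then forces $M$ to be an origin-centred sphere.

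The main obstacle is the precise extraction of $\mathcal{Q}$: because $M$ is not assumed origin-symmetric, the cross term between $\alpha u$ and $\beta\langle x,e\rangle$ does not vanish by parity, so one must manipulate $\int u\langle x,e\rangle\,d\mu$ and similar integrals via integration by parts on $\mathbb{S}^n$ together with the Minkowski-type identity $\int u\,d\mu = (n+1)\vol(K)$ and the self-similar equation itself. Isolating the two integer coefficients $n$ and $n+2$ appearing in the discriminant (which arise from the Laplacian eigenvalue of first spherical harmonics and the dimension of $\mathbb{S}^n$) out of these computations is the delicate algebraic step.
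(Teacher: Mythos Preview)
Your plan diverges from the paper's argument in an essential way, and as written it has a real gap.

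The paper does not use the two-parameter family $\varphi=\alpha u+\beta\langle x,e\rangle$ at all. It applies the spectral inequality (Lemma~\ref{Lem 2.4-1}) to the test functions $f_l=|X|^{\alpha}\langle X,E_l\rangle-c_l$, summed over an orthonormal basis $\{E_l\}$, where $\alpha$ is a free real exponent. In the non-origin-symmetric setting the extra term $n\bigl|\int|X|^{\alpha}X\,dV_n\bigr|^2/\int dV_n$ survives. The key idea is to take the specific value $\alpha=q-n-1$, which turns $|X|^{\alpha}dV_n$ into $u^{p}\,d\sigma$; the divergence theorem on $\mathbb{S}^n$ then yields
\[
\int_{\mathbb{S}^n}|X|^{\alpha}X\,dV_n=\frac{n+p+1}{n}\int_{\mathbb{S}^n}|X|^{\alpha}\nabla u\,dV_n,
\]
after which the centroid term is estimated by Cauchy--Schwarz. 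Two further integrations by parts give an inequality of the form
\[
0\le A\int|X|^{2\alpha-2}|\nabla u|^4\,dV_n+B\int|X|^{2\alpha-2}u^2|\nabla u|^2\,dV_n
\]
with explicit $A,B$ depending on $p,q$; the range in part~(\ref{thm-7i}) is exactly $A\le B\le 0$, forcing $\nabla u\equiv 0$. Case~(\ref{thm-7ii}) is not a parallel computation but follows from the duality $(p,q)\leftrightarrow(-q,-p)$ of Chen--Huang--Zhao.

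In your scheme the operator $L$ and the inequality $\mathcal I(\varphi)\ge 0$ are never written down, so one cannot check the sign conventions or where the equation $u^{1-p}r^{q-n-1}\det W=1$ enters. More seriously, your logical structure is unclear: you say ``non-negativity of $\mathcal Q$ is equivalent to non-positivity of the discriminant'', but the local Brunn--Minkowski inequality already forces $\mathcal Q\ge 0$ for \emph{every} convex body, so this cannot by itself constrain $M$; what one actually needs is a choice of $(\alpha,\beta)$ making $\mathcal Q(\alpha,\beta)\le 0$, with the coefficients of $\mathcal Q$ depending on $M$ through integrals that vanish only for spheres. You also concede that the cross-term computation is the ``delicate algebraic step'' and do not carry it out, whereas it is precisely there that the paper's choice $\alpha=q-n-1$ and the divergence-theorem identity do the work. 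Your discriminant matching the theorem's bound is a good consistency check, but without the actual evaluation of $\mathcal Q$ and a correct equality-case analysis the argument is not a proof.
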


When $q=n+1$, Theorem \ref{mainthm-7} reduces to the following uniqueness result for the isotropic $L_p$ Minkowski problem (\ref{is-p-M-eq}) within the range $-n-1<p\le -1$, originally established by Brendle, Choi and Daskalopoulos \cite{BCD17}. 

\begin{cor}[\cites{BCD17,IM23,Sa22}]\label{cor-10}
Let $n\ge 1$. Suppose $-n-1< p\le -1$. Then the smooth strictly convex solution $M$ to the isotropic $L_p$ Minkowski problem (\ref{is-p-M-eq}) must be an origin-centred sphere.  
\end{cor}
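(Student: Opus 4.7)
The plan is to deduce Corollary~\ref{cor-10} as a direct specialization of Theorem~\ref{mainthm-7} to the value $q=n+1$, so essentially all of the work has already been done. First I observe that the isotropic $L_p$ Minkowski equation~(\ref{is-p-M-eq}), namely $u^{1-p}\det(W)=1$ on $\mathbb{S}^n$, is literally the equation~(\ref{p,q-M-eq}) in the degenerate case $q=n+1$, because then $r^{q-n-1}\equiv 1$. Thus every smooth strictly convex solution of~(\ref{is-p-M-eq}) is also a solution of~(\ref{p,q-M-eq}) with $(p,q)=(p,n+1)$, and it suffices to check that this pair lies in the hypothesis of Theorem~\ref{mainthm-7}(\ref{thm-7i}).

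The range there is
\begin{equation*}
-n-1 < p \le -1,\qquad n+1 \le q \le n+\tfrac{1}{2}+\sqrt{\tfrac{1}{4}-\tfrac{(1+p)(n+1+p)}{n(n+2)}}.
\end{equation*}
The left bound $n+1\le q$ is saturated by our choice $q=n+1$. The right bound, after subtracting $n+\tfrac{1}{2}$ and squaring (permissible since both sides become non-negative), reduces to
\begin{equation*}
\tfrac{1}{4} \le \tfrac{1}{4}-\tfrac{(1+p)(n+1+p)}{n(n+2)},
\end{equation*}
i.e.\ $(1+p)(n+1+p)\le 0$. The quadratic on the left has roots at $p=-1$ and $p=-n-1$ and opens upward, so it is non-positive precisely on the interval $[-n-1,-1]$; in particular it is non-positive (hence the discriminant is non-negative) for every $p\in(-n-1,-1]$. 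Consequently the pair $(p,n+1)$ meets the hypothesis of Theorem~\ref{mainthm-7}(\ref{thm-7i}), and applying that theorem forces any smooth strictly convex solution $M$ of~(\ref{is-p-M-eq}) to be an origin-centred sphere, which is precisely Corollary~\ref{cor-10}.

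The main obstacle in this deduction is not the reduction itself --- which is just the elementary observation about where the quadratic $(1+p)(n+1+p)$ is non-positive --- but rather the proof of Theorem~\ref{mainthm-7}, where one has to adapt the local Brunn--Minkowski method of Ivaki and Milman~\cite{IM23} by inserting a careful choice of test functions in order to handle the $L_p$ weight $u^{1-p}$ and the dual weight $r^{q-n-1}$ simultaneously without assuming origin symmetry. For a self-contained direct proof of the corollary alone, one could specialize those test functions to the isotropic $L_p$ Minkowski setting, thereby recovering the Brendle--Choi--Daskalopoulos argument~\cite{BCD17}; however, this yields no simplification over simply invoking Theorem~\ref{mainthm-7}.
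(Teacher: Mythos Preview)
Your proposal is correct and takes essentially the same approach as the paper: the text immediately preceding Corollary~\ref{cor-10} simply remarks that setting $q=n+1$ in Theorem~\ref{mainthm-7} yields the corollary, and your verification that the pair $(p,n+1)$ satisfies the hypotheses of case~(\ref{thm-7i}) via the sign of $(1+p)(n+1+p)$ is exactly the content implicit in that reduction.
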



$\ $

The paper is organized as follows. In Section \ref{sec:2}, we collect some basic concepts and techniques.
In Section \ref{sec:3}, we prove Theorem \ref{main-thm-1}, Theorem \ref{mainthm-2} and Theorem \ref{mainthm-5} by use of integral formulas. In Section \ref{sec:4}, inspired by Simon's result \cite{Si67}, we give the proof of Theorem \ref{mainthm-3}. In Section \ref{sec:5}, using the method developed by Ivaki and Milman \cite{IM23}, we prove Theorem \ref{mainthm-6} and Theorem \ref{mainthm-7}. In Appendix \ref{appendix}, we provide a uniqueness result of origin-symmetric solutions to the isotropic $L_p$ dual Christoffel-Minkowski problem.

\begin{ack}
	The work was supported by NSFC Grant No. 11831005.
\end{ack}

\section{Preliminaries}\label{sec:2}

\subsection{Inequalities of elementary symmetric functions}\label{sec:2.1}$\ $

We first review some properties of elementary symmetric functions, see e.g. \cites{CNS85, Ni00, Ga59}.

\begin{defn}
For an integer $k=1,\ldots,n$ and a point $\lambda=(\lambda_1,\ldots,\lambda_n)\in \mathbb{R}^n$, the \textit{normalized $k$-th elementary symmetric function} $P_k(\lambda)$ is defined by
\begin{align*}
P_k(\lambda)=\binom{n}{k}^{-1}\sigma_k(\lambda)=\binom{n}{k}^{-1}\sum_{1\leq i_1<\cdots<i_k \leq n}\lambda_{i_1}\cdots \lambda_{i_k}.
\end{align*}
It is convenient to set $P_0(\lambda)=1$ and $P_k(\lambda)=0$ for $k>n$.
\end{defn}

The definition can be extended to symmetric matrices. Let $A\in \mathrm{Sym}(n)$ be an $n\times n$ symmetric matrix and $\lambda=\lambda(A)$ be the eigenvalues of $A$. Set $P_k(A)=P_k(\lambda(A))$. We have
\begin{align*}
  P_k(A) =& \frac{(n-k)!}{n!}\delta_{i_1,\ldots,i_k}^{j_1,\ldots,j_k}A_{i_1j_1}\cdots A_{i_k j_k},\quad k=1,\ldots,n,
\end{align*}
where $\delta_{i_1,\ldots,i_k}^{j_1,\ldots,j_k}=\det(\delta_{i_s}^{j_t})_{k\times k}$ denotes the generalized Kronecker delta.

\begin{lem}\label{Lem 2.1-1}
If $\lambda\in \Gamma_k=\{\lambda\in \mathbb{R}^n:\ P_i(\lambda)>0,\ i=1,\ldots,k\}$, we have the following Newton-MacLaurin inequality
\begin{align}\label{2.1}
  P_k(\lambda) P_{l-1}(\lambda)\leq P_l(\lambda) P_{k-1}(\lambda),\quad 1\leq l<k.
\end{align}
Equality holds if and only if $\lambda_1=\ldots=\lambda_n$.
\end{lem}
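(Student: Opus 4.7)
The plan is to reduce the Newton--MacLaurin inequality $P_k P_{l-1} \leq P_l P_{k-1}$ to its consecutive special case, namely Newton's inequality $P_{j-1} P_{j+1} \leq P_j^2$ for $1 \leq j \leq k-1$. Observing that the desired inequality is equivalent to $P_k/P_{k-1} \leq P_l/P_{l-1}$, it suffices to show that the ratio $P_j/P_{j-1}$ is nonincreasing in $j$ throughout the range $1 \leq j \leq k$. This monotonicity will follow by telescoping the Newton inequalities $P_{j+1}/P_j \leq P_j/P_{j-1}$ for $j = l, l+1, \ldots, k-1$, after checking that each denominator is strictly positive, which is exactly what the hypothesis $\lambda \in \Gamma_k$ supplies.

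To prove Newton's inequality for $\lambda \in \Gamma_{j+1}$, I would exploit the fact that the coordinates $\lambda_1, \ldots, \lambda_n$ are by hypothesis real. Consider the real-rooted polynomial
\[
p(t) = \prod_{i=1}^n (t + \lambda_i) = \sum_{i=0}^n \binom{n}{i} P_i(\lambda)\, t^{n-i}.
\]
By Rolle's theorem, every derivative $p^{(m)}(t)$ has only real roots. Differentiating $p$ exactly $n - j - 1$ times yields a real-rooted polynomial of degree $j+1$ whose coefficients are positive multiples of $P_0, P_1, \ldots, P_{j+1}$. A further sequence of differentiations (or, equivalently, passing to a polar derivative) would then isolate a real-rooted quadratic whose three coefficients are positive multiples of $P_{j-1}$, $P_j$, $P_{j+1}$; the nonnegativity of its discriminant is precisely Newton's inequality.

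For the equality characterization, equality $P_k P_{l-1} = P_l P_{k-1}$ forces equality in each intermediate Newton step, i.e.\ $P_{i-1} P_{i+1} = P_i^2$ for every $l \leq i \leq k-1$. The corresponding quadratic will then have a double real root, and tracing this backwards through the Rolle differentiations will show that the original polynomial $p$ also has only a single repeated root, i.e., $\lambda_1 = \cdots = \lambda_n$. The subtle technical point to keep in mind is that $\Gamma_k$ does not force $\lambda_i > 0$, but this is irrelevant because the reality of the $\lambda_i$ is automatic from $\lambda \in \mathbb{R}^n$; the strict positivity of $P_0, \ldots, P_k$ in $\Gamma_k$ only serves to ensure that the ratios appearing in the telescoping argument are well defined and have a definite sign. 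The main obstacle is therefore essentially bookkeeping the differentiation indices so that they land exactly on the triple $(P_{j-1}, P_j, P_{j+1})$ and verifying the equality-in-Rolle propagation.
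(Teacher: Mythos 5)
The paper does not prove this lemma at all: it is quoted as a classical fact with pointers to the references \cite{CNS85}, \cite{Ni00}, \cite{Ga59}, so there is no ``paper proof'' to compare against. Your proposal is the standard self-contained argument (essentially the one in Hardy--Littlewood--P\'olya and in \cite{Ni00}): telescope the consecutive Newton inequalities $P_{j-1}P_{j+1}\le P_j^2$ to get $P_k/P_{k-1}\le P_l/P_{l-1}$, and prove each Newton inequality by repeated differentiation of the real-rooted polynomial $\prod_i(t+\lambda_i)$ together with a reversal, reading off the discriminant of the resulting quadratic. The outline is correct, and the two places where care is genuinely needed are exactly the ones you flag. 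First, the reversal $q(t)\mapsto t^{\deg q}\,q(1/t)$ loses degree (and can collapse distinct roots) precisely when $q(0)=0$, i.e.\ when $P_{j+1}=0$; here the hypothesis $\lambda\in\Gamma_k$ with $j+1\le k$ gives $P_{j+1}>0$ and removes the issue (note $\Gamma_k$ does not exclude zero coordinates $\lambda_i$, only the vanishing of $P_{j+1}$ itself, so this check is not vacuous). Second, for the equality case you should make explicit that Newton's inequality for arbitrary real $\lambda$ has degenerate equality cases (e.g.\ when enough $\lambda_i$ vanish so that $P_j=P_{j+1}=0$), and that these are excluded because $P_{j-1},P_j,P_{j+1}$ are all strictly positive in the range $l\le j\le k-1$; with that in place, the backward Rolle propagation (a real-rooted polynomial of degree at least $2$ with two distinct roots has a derivative with two distinct roots) does force $\lambda_1=\cdots=\lambda_n$. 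With those two points written out, the proof is complete and consistent with the sources the paper cites.
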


Furthermore, we state the following properties which can be found in \cite[Theorem 2.1]{Ni00}.
\begin{lem}[\cite{Ni00}]\label{Lem 2.1-2}
Suppose that $\alpha,\beta\in (0,1)$ and $k,l\in\mathbb{N}$ are numbers such that
\begin{align*}
    \alpha+\beta=1\quad \text{and}\quad k\alpha+l\beta\in\{0,\ldots,n\}.
\end{align*} 

If $\lambda\in \Gamma_k\cap\Gamma_l$, we have 
\begin{align}\label{2.2}
    P_{k\alpha+l\beta}(\lambda)\ge P_k^{\alpha}(\lambda) P_l^{\beta}(\lambda).
\end{align}
Equality holds if and only if $\lambda_1=\ldots=\lambda_n$.    
\end{lem}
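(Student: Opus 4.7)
The plan is to deduce (\ref{2.2}) from the log-concavity of the integer sequence $j\mapsto\log P_j(\lambda)$, which in turn follows from the classical normalized Newton inequality $P_j(\lambda)^2\ge P_{j-1}(\lambda)P_{j+1}(\lambda)$, valid for $\lambda\in\Gamma_{j+1}$, with equality iff $\lambda_1=\cdots=\lambda_n$.

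Without loss of generality I assume $l<k$ and set $m:=k\alpha+l\beta$, so that $l\le m\le k$; the boundary cases $m=l$ and $m=k$ are trivial, so I focus on $l<m<k$. Since $\alpha=(m-l)/(k-l)$ and $\beta=(k-m)/(k-l)$, taking logarithms shows that (\ref{2.2}) is equivalent to
\begin{equation*}
(k-l)\log P_m(\lambda)\ge (m-l)\log P_k(\lambda)+(k-m)\log P_l(\lambda),
\end{equation*}
i.e.\ the three-point concavity of $j\mapsto\log P_j(\lambda)$ at $j=l,m,k$. This three-point concavity follows from iterating the two-step form $2\log P_j\ge \log P_{j-1}+\log P_{j+1}$ of Newton's inequality over each consecutive triple in the chain $l,l+1,\ldots,k$; a standard telescoping argument combines these into the desired three-point statement.

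To prove Newton's inequality, I would use the real-rootedness of $p(t):=\prod_{i=1}^n(t+\lambda_i)=\sum_{j=0}^n\binom{n}{j}P_j(\lambda)\,t^{n-j}$. When $\lambda$ has all positive entries, $p$ is real-rooted, and iterated Rolle's theorem keeps each derivative of $p$ real-rooted. Differentiating the appropriate number of times extracts a real-rooted quadratic in $t$ whose nonnegative discriminant is exactly Newton's inequality at index $j$. The extension from the positive orthant to the larger cone $\Gamma_{j+1}$ uses the nesting $\Gamma_k\subseteq\Gamma_{k-1}$ of the Garding cones together with a standard density/continuity argument.

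The principal difficulty is Newton's inequality itself; once it is available, the rest is elementary. Positivity of all intermediate $P_j(\lambda)$ along the chain $l\le j\le k$ --- needed so that the logarithms above make sense --- follows from $\lambda\in\Gamma_k\cap\Gamma_l$ via the cone nesting $\Gamma_k\subseteq\Gamma_j$ for $j\le k$. Finally, equality in (\ref{2.2}) propagates into every iterated Newton step, and the equality case of Newton (coming from a double root in Rolle's theorem) forces $\lambda_1=\cdots=\lambda_n$.
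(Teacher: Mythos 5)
The paper itself offers no proof of this lemma --- it is cited to \cite{Ni00} --- so you are essentially reconstructing the standard argument. Your overall structure is correct: \eqref{2.2} is precisely three-point concavity of $j\mapsto\log P_j(\lambda)$ along the integers from $l$ to $k$, and this follows by telescoping the two-step Newton inequalities $P_j^2\ge P_{j-1}P_{j+1}$; the positivity of every intermediate $P_j$ and the validity of each Newton step are guaranteed by $\lambda\in\Gamma_{\max(k,l)}$ together with the cone nesting $\Gamma_k\subset\Gamma_{k-1}\subset\cdots\subset\Gamma_1$; and equality in \eqref{2.2} forces equality in each Newton step along the chain, hence $\lambda_1=\cdots=\lambda_n$. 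This is in line with Niculescu's treatment.

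There is, however, a genuine misstep in how you propose to establish Newton's inequality on $\Gamma_{j+1}$: ``the extension from the positive orthant to $\Gamma_{j+1}$ uses \dots a standard density/continuity argument.'' This does not work, because the positive orthant is \emph{not} dense in $\Gamma_{j+1}$ when $j+1<n$. The G\.arding cone $\Gamma_{j+1}$ is strictly larger than the closed nonnegative orthant; for instance, $(3,3,-1)\in\Gamma_2\subset\mathbb{R}^3$ has $\sigma_1=5>0$, $\sigma_2=3>0$, but is not in the closure of the positive octant. So continuity from the positive orthant reaches only its boundary, leaving the bulk of $\Gamma_{j+1}$ untouched. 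The fix is easy: the real-rootedness argument for $p(t)=\prod_i(t+\lambda_i)$ requires only that all $\lambda_i$ be real, not positive, so Newton's inequality $P_j^2\ge P_{j-1}P_{j+1}$ holds on all of $\mathbb{R}^n$ with no cone hypothesis at all (the cone is only needed to guarantee positivity of the $P_j$'s and to pin down the equality case). Alternatively, and more economically in the context of this paper, you can simply invoke Lemma~\ref{Lem 2.1-1}, which already states Newton--MacLaurin with its equality characterization for $\lambda\in\Gamma_k$, precisely the cone you have available along the entire chain $l,l+1,\dots,k$.
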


\begin{defn}
For an integer $k=1,\ldots,n$ and $k$ points $\lambda^{(1)},\ldots,\lambda^{(k)}\in \mathbb{R}^n$, the \textit{normalized $k$-th mixed elementary symmetric function} $P_k(\lambda^{(1)},\ldots,\lambda^{(k)})$ is defined by
\begin{align*}
    P_k(\lambda^{(1)},\ldots,\lambda^{(k)})=\binom{n}{k}^{-1}\sigma_k(\lambda^{(1)},\ldots,\lambda^{(k)})
    =\binom{n}{k}^{-1}\sum_{1\leq i_1<\cdots<i_k \leq n}\lambda_{i_1}^{(1)}\cdots \lambda_{i_k}^{(k)}.
\end{align*}  
\end{defn}

The definition can also be extended to symmetric matrices. Let $A^1,\ldots,A^k\in \mathrm{Sym}(n)$ be $n\times n$ symmetric matrices and $\lambda^{(i)}=\lambda(A^i)$ be the eigenvalues of $A^i$ for $i=1,\ldots,k$. Set $P_k(A^1,\ldots,A^k)=P_k(\lambda^{(1)},\ldots,\lambda^{(k)})$. We have
\begin{align*}
  P_k(A^1,\ldots,A^k)
  &=\frac{(n-k)!}{n!}\delta_{i_1,\ldots,i_k}^{j_1,\ldots,j_k}A_{i_1j_1}^1\cdots A_{i_k j_k}^k.
\end{align*}

It is obvious that 
\begin{align*}
    P_k(\lambda,\ldots,\lambda)=P_k(\lambda)\quad \text{and}\quad 
    P_n(\lambda^{(1)},\ldots,\lambda^{(k)},I,\ldots,I)=P_k(\lambda^{(1)},\ldots,\lambda^{(k)}),
\end{align*}
equivalently,
\begin{align*}
    P_k(A,\ldots,A)=P_k(A)\quad \text{and}\quad 
    P_n(A^1,\ldots,A^k,Id,\ldots,Id)=P_k(A^1,\ldots,A^k),
\end{align*}
where $I=(1,\ldots,1)\in\mathbb{R}^n$, and $Id$ is the $n\times n$ identity matrix.

Since $P_k$ is hyperbolic and complete, we have the following lemma, see \cite[Theorem 5]{Ga59}.
\begin{lem}[\cite{Ga59}]\label{Lem 2.1-3}
For any $A^i\in \Gamma_k,\ i=1,\ldots,k$, we have
\begin{align}\label{2.3}
    P_k(A^1,\ldots,A^k)\ge P_k(A^1)^{\frac{1}{k}}\cdots P_k(A^k)^{\frac{1}{k}}.
\end{align}
Equality holds if and only if these $k$ matrices are pairwise proportional. 
\end{lem}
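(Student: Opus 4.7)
The plan is to recognize Lemma 2.1-3 as Garding's AM-GM inequality for the hyperbolic polynomial $P_k$ on $\mathrm{Sym}(n)$, and to establish it by following the classical framework of hyperbolic polynomials.

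First, I would verify that $P_k$ is a hyperbolic polynomial of degree $k$ with respect to the identity matrix $Id$, with Garding cone exactly $\Gamma_k$. Since $P_k$ depends only on eigenvalues, writing $\lambda_1,\ldots,\lambda_n$ for the eigenvalues of $A\in\mathrm{Sym}(n)$, one has $P_k(A+tId)=\binom{n}{k}^{-1}\sigma_k(\lambda_1+t,\ldots,\lambda_n+t)$. Setting $q(u):=\prod_i(u+\lambda_i)$, which is a real-rooted polynomial of degree $n$, a direct expansion yields $\sigma_k(\lambda_1+t,\ldots,\lambda_n+t)=\frac{1}{(n-k)!}\,q^{(n-k)}(t)$, so real-rootedness of $P_k(A+tId)$ in $t$ follows from Rolle's theorem applied repeatedly to $q$. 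The Garding cone, characterized as the connected component of the locus $\{A:P_k(A+tId)\text{ has only negative roots}\}$ containing $Id$, is then recognized as $\Gamma_k$ by the sign of the Newton identities.

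Second, from hyperbolicity I would derive the Alexandrov--Fenchel inequality $P_k(A,B,A^3,\ldots,A^k)^2 \geq P_k(A,A,A^3,\ldots,A^k)\,P_k(B,B,A^3,\ldots,A^k)$ for $A,B,A^3,\ldots,A^k\in\Gamma_k$. This is the standard two-variable consequence of the concavity of $P_k^{1/k}$ on $\Gamma_k$, which itself follows from hyperbolicity via Garding's argument: the Hessian of $P_k^{1/k}$ is shown to be negative semidefinite through a reduction to the real-rootedness of a quadratic polynomial in two variables, extracted from restricting $P_k$ to the plane spanned by $A$ and $B$ inside $\mathrm{Sym}(n)$.

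Third, the full Garding AM-GM is obtained by iterated application of Alexandrov--Fenchel. Starting from $P_k(A^1,\ldots,A^k)^2 \geq P_k(A^1,A^1,A^3,\ldots,A^k)\,P_k(A^2,A^2,A^3,\ldots,A^k)$, one bounds each factor on the right by further applications of Alexandrov--Fenchel on different pairs of slots, and after finitely many rounds arrives at $P_k(A^1,\ldots,A^k)^k \geq \prod_{i=1}^k P_k(A^i)$, which is the desired inequality after taking $k$-th roots. I expect the main obstacle to be the equality case: each application of Alexandrov--Fenchel has equality precisely when the two varying slots are proportional modulo an appropriate null space, and tracing these proportionality relations through the whole iteration to conclude that the $A^i$ are pairwise proportional requires either a dedicated induction on the number of distinct matrices or Garding's original dimension-reduction argument, where one shows irreducibility of the equality locus of $P_k$ in the relevant stratum.
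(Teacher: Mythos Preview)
The paper does not prove this lemma; the sentence immediately preceding it reads ``Since $P_k$ is hyperbolic and complete, we have the following lemma, see \cite[Theorem 5]{Ga59},'' and the result is simply quoted from G{\aa}rding's paper without further argument. So there is no proof in the paper to compare against.

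Your outline is essentially G{\aa}rding's original argument, and is a correct route. One imprecision worth flagging: the Alexandrov--Fenchel inequality for the polarized form does not follow from the concavity of $P_k^{1/k}$ alone (that only gives Minkowski-type inequalities). What G{\aa}rding actually shows is that, for a hyperbolic polynomial, freezing $k-2$ arguments in the cone yields a quadratic form of Lorentzian signature on the cone, and the reverse Cauchy--Schwarz (i.e.\ Alexandrov--Fenchel) is a consequence of that signature. Your remark about ``negative semidefinite Hessian'' should be amended accordingly. For the equality case, note that G{\aa}rding needs the polynomial to be \emph{complete} (the polarized form is strictly positive on the cone), which is exactly the hypothesis the paper invokes in the sentence before the lemma; this is what rules out degenerate equality configurations and forces pairwise proportionality.
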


In particular, define $P_{k,l}(\lambda,\Bar{\lambda})=P_{k+l}(\underbrace{\lambda,\ldots,\lambda}_{k},\underbrace{\Bar{\lambda},\ldots,\Bar{\lambda}}_{l})$. Then we have
\begin{lem}\label{Lem 2.1-4}
For any $\lambda,\Bar{\lambda}\in \Gamma_{k+l}$,
\begin{align}\label{2.4}
    P_{k,l}(\lambda,\Bar{\lambda})\ge P_{k+l}(\lambda)^{\frac{k}{k+l}} P_{k+l}(\Bar{\lambda})^{\frac{l}{k+l}},
\end{align}
Equality holds if and only if $\lambda$ and $\Bar{\lambda}$ are proportional. 
\end{lem}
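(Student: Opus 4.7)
The plan is to derive Lemma 2.1-4 as a direct specialization of Lemma 2.1-3 (Garding's inequality for hyperbolic polynomials), which the paper has already cited. The key observation is purely combinatorial: by the very definition
\[
P_{k,l}(\lambda,\bar\lambda)=P_{k+l}\bigl(\underbrace{\lambda,\ldots,\lambda}_{k},\underbrace{\bar\lambda,\ldots,\bar\lambda}_{l}\bigr),
\]
so $P_{k,l}(\lambda,\bar\lambda)$ is precisely the value of the mixed symmetric function on the $(k+l)$-tuple whose first $k$ entries are $\lambda$ and whose last $l$ entries are $\bar\lambda$. Regarding $\lambda,\bar\lambda\in\Gamma_{k+l}$ as diagonal matrices (equivalently, applying Lemma 2.1-3 at the level of eigenvalue vectors, where it is well known to hold since the vector formulation and the matrix formulation of Garding's inequality agree for simultaneously diagonal inputs), both of these $k+l$ arguments lie in $\Gamma_{k+l}$, so Lemma 2.1-3 applies.

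First I would write down the conclusion of Lemma 2.1-3 specialized to this tuple, namely
\[
P_{k+l}\bigl(\underbrace{\lambda,\ldots,\lambda}_{k},\underbrace{\bar\lambda,\ldots,\bar\lambda}_{l}\bigr)\ \ge\ \prod_{i=1}^{k}P_{k+l}(\lambda)^{\frac{1}{k+l}}\cdot\prod_{j=1}^{l}P_{k+l}(\bar\lambda)^{\frac{1}{k+l}},
\]
and then collect exponents on the right-hand side to obtain
\[
P_{k,l}(\lambda,\bar\lambda)\ \ge\ P_{k+l}(\lambda)^{\frac{k}{k+l}}P_{k+l}(\bar\lambda)^{\frac{l}{k+l}},
\]
which is exactly (2.4).

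For the equality statement, I would invoke the equality clause of Lemma 2.1-3: equality forces the $k+l$ arguments to be pairwise proportional. Among the first $k$ copies this is automatic (they are all $\lambda$), and likewise for the last $l$ (all $\bar\lambda$), so the only nontrivial constraint is that $\lambda$ and $\bar\lambda$ be proportional. Conversely, if $\bar\lambda=c\lambda$ for some $c>0$, then $P_{k,l}(\lambda,\bar\lambda)=c^{l}P_{k+l}(\lambda)$ while the right-hand side equals $P_{k+l}(\lambda)^{\frac{k}{k+l}}\bigl(c^{k+l}P_{k+l}(\lambda)\bigr)^{\frac{l}{k+l}}=c^{l}P_{k+l}(\lambda)$, so equality indeed holds.

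There is no real obstacle here; the statement is essentially a repackaging of Garding's inequality. The only point requiring mild care is the passage from the matrix version of Lemma 2.1-3 to the eigenvalue version used in Lemma 2.1-4, but this is immediate because $\lambda$ and $\bar\lambda$ may be realized as commuting (diagonal) matrices so that the mixed discriminant reduces to the mixed elementary symmetric function of their eigenvalues.
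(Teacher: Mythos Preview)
Your proposal is correct and matches the paper's approach exactly: the paper introduces Lemma~\ref{Lem 2.1-4} with the phrase ``In particular'' immediately after Lemma~\ref{Lem 2.1-3}, indicating it is regarded as a direct specialization of G\aa rding's inequality to the $(k+l)$-tuple consisting of $k$ copies of $\lambda$ and $l$ copies of $\bar\lambda$. Your treatment of the equality case and the remark on passing between the matrix and vector formulations are both accurate and slightly more explicit than what the paper records.
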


\subsection{$L_k$ operator}\label{sec:2.2} $\ $

Let $x:M^n\to\mathbb{R}^{n+1}$ be a hypersurface, and choose a unit normal vector $e_{n+1}$ such that $\{e_1,\ldots,e_n,e_{n+1}\}$ is a local orthonormal basis. Let $(h_{ij})$ be the second fundamental form and $\sigma_k(h_{ij})$ be the $k$-th mean curvature of $M$. Denote by $u=-\langle x,e_{n+1}\rangle$ the support function and by $P_k(h_{ij})=\binom{n} {k}^{-1}\sigma_k(h_{ij})$ the normalized $k$-th mean curvature of $M$.

\begin{defn}
For a hypersurface $x:M^n\to\mathbb{R}^{n+1}$, the \textit{$k$-th Newton tensor} is defined by
\begin{align*}
   T_{ij}^k
   =\sigma_k(h_{ij}) \delta_{ij}-\sigma_{k-1}(h_{ij})h_{ij}+\ldots+(-1)^k \sum\limits_{i_1,\ldots,i_k} h_{i i_1}h_{i_1 i_2}\cdots h_{i_k j},\quad k=0,\ldots,n-1.
\end{align*}
We have a second-order differential operator $L_k$ defined by
\begin{align*}
    L_k f=\sum_{i,j}T_{ij}^k f_{ij},\quad \forall\ f\in C^{\infty}(M),\ 0\le k\le n-1.
\end{align*}
\end{defn}

$L_k$ operator satisfies the following properties, see \cites{Re73, HL08}:
\begin{prop}\label{prop-1}
\begin{enumerate}[(i)]
    \item  $T_{ij}^k=T_{ji}^k,\ \sum_j T_{ij,j}^k=0$ and $T_{ij}^k=\frac{1}{k!}\delta_{i_1,\ldots,i_k,i}^{j_1,\ldots,j_k,j}h_{i_1 j_1}\cdots h_{i_k j_k}$.

    \item $L_k$ operator is elliptic and self-adjoint for a strictly convex hypersurface $M$.

    \item  We have
    \begin{align}\label{2.5}
    & \frac{1}{2} L_k (|x|^2)=\sum T_{ij}^k (\langle x,e_i\rangle)_j
    =(n-k)\binom{n}{k}(P_k(h_{ij})-u P_{k+1}(h_{ij})).
\end{align}
\end{enumerate}
\end{prop}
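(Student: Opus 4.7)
The plan is to establish the three parts of Proposition~\ref{prop-1} by standard tensor calculations using the generalized Kronecker delta formalism, the Codazzi equation, and classical identities for elementary symmetric polynomials.

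For part (i), I would first derive the representation $T_{ij}^k=\frac{1}{k!}\delta^{j_1,\ldots,j_k,j}_{i_1,\ldots,i_k,i}h_{i_1j_1}\cdots h_{i_kj_k}$ by expanding the generalized Kronecker delta as a signed sum over permutations; the contraction with the symmetric factors $h_{i_sj_s}$ collapses into exactly the alternating Newton-tensor expression given in the definition. The symmetry $T_{ij}^k=T_{ji}^k$ is then immediate from the symmetry of $h_{ij}$ and the antisymmetry of the Kronecker delta in both its upper and lower index groups. For the divergence identity $\sum_j T_{ij,j}^k=0$, I would differentiate the representation term by term: each derivative falls on a factor $h_{i_sj_s}$, and the Codazzi equation $h_{i_sj_s,j}=h_{i_sj,j_s}$ (valid for hypersurfaces of the flat ambient $\mathbb{R}^{n+1}$) renders the result symmetric in the two upper indices $j$ and $j_s$, which the fully antisymmetric Kronecker delta annihilates.

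For part (ii), at a point where $(h_{ij})$ is diagonalized with principal curvatures $\lambda_1,\ldots,\lambda_n$, the representation shows that $T^k$ is simultaneously diagonal with eigenvalues $\sigma_k(\lambda|i):=\sigma_k(\lambda_1,\ldots,\lambda_{i-1},\lambda_{i+1},\ldots,\lambda_n)$, which are all strictly positive under the convexity assumption $\lambda_i>0$. This proves ellipticity. Self-adjointness on the closed manifold $M$ then follows by integration by parts: writing
\begin{equation*}
fL_kg-gL_kf=\nabla_j\bigl((fg_{,i}-gf_{,i})T_{ij}^k\bigr),
\end{equation*}
where the boundary term involving $T^k_{ij,j}$ vanishes by (i), and invoking the divergence theorem kills the total divergence.

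For part (iii), the Gauss formula in the flat ambient space yields $(\tfrac12|x|^2)_{,i}=\langle x,e_i\rangle$ and $(\tfrac12|x|^2)_{,ij}=\delta_{ij}-u\,h_{ij}$, with the sign in front of $u$ dictated by the convention $u=-\langle x,e_{n+1}\rangle$. Contracting with $T^k_{ij}$ and invoking the classical trace identities $\sum_iT_{ii}^k=(n-k)\sigma_k$ and $\sum_{i,j}T_{ij}^kh_{ij}=(k+1)\sigma_{k+1}$, together with $\sigma_m=\binom{n}{m}P_m$ and the combinatorial identity $(k+1)\binom{n}{k+1}=(n-k)\binom{n}{k}$, delivers the stated formula. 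I anticipate no serious obstacle: the main care required is in the combinatorial bookkeeping for the trace identities, which reduce to the elementary recursion $\sigma_{k+1}(\lambda)=\tfrac{1}{k+1}\sum_i\lambda_i\sigma_k(\lambda|i)$, and in remaining consistent with the sign convention $u=-\langle x,e_{n+1}\rangle$ when computing the Euclidean Hessian of $|x|^2$.
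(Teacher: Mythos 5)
Your proposal is correct. The paper itself offers no proof of Proposition 2.4, simply citing \cite{Re73} and \cite{HL08}, and your argument is precisely the standard one found in those references: the generalized-Kronecker-delta representation plus Codazzi for (i), positivity of the eigenvalues $\sigma_k(\lambda|i)$ and the divergence structure $fL_kg-gL_kf=\nabla_j\bigl((fg_{,i}-gf_{,i})T^k_{ij}\bigr)$ for (ii), and the Hessian computation $\tfrac12(|x|^2)_{,ij}=\delta_{ij}-u\,h_{ij}$ combined with the traces $\sum_iT^k_{ii}=(n-k)\sigma_k$, $T^k_{ij}h_{ij}=(k+1)\sigma_{k+1}$ and $(k+1)\binom{n}{k+1}=(n-k)\binom{n}{k}$ for (iii). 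The only cosmetic slip is calling the $T^k_{ij,j}$ contribution in (ii) a ``boundary term''; it is an interior term that vanishes by (i), after which the divergence theorem on the closed hypersurface finishes the job.
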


\subsection{Integral formulas for hypersurfaces}\label{sec:2.3}$\ $

In this subsection, we review some integral formulas derived by S. S. Chern \cite{Ch59} and provide some new integral formulas that contain radial functions.

Let $x:M^n\to\mathbb{R}^{n+1}$ and $\Bar{x}:\Bar{M}^n\to\mathbb{R}^{n+1}$ be two closed strictly convex smooth hypersurfaces such that the normal vectors of $M$ at $x$ and of $\Bar{M}$ at $\Bar{x}$ are the same. Let $\{e_1,\ldots, e_n,e_{n+1}\}$ be an orthonormal frame in $\mathbb{R}^{n+1}$ such that $e_{n+1}$ is the unit inner normal vector of $M$ at $x$, and also the unit inner normal vector of $\Bar{M}$ at $\Bar{x}$. We will denote the following ranges of indices:
$$1\le A,B,\ldots\le n+1;\quad 1\le i,j,\ldots\le n.$$

Let $\{\theta_A\}$ be the dual frame field of $\{e_A\}$, and denote $\omega_A=\left.\theta_A\right|_M$ and $\Bar{\omega}_A=\left.\theta_A\right|_{\Bar{M}}$. Restricting $\theta_A$ to the hypersurface $M$, we have $\theta_i=\omega_i$ and $\theta_{n+1}=0$. We have
$$dx=\sum\limits_{i}\omega_i e_i,\quad de_{n+1}=\sum\limits_{i}\omega_{n+1,i}e_i,$$
where $\omega_{i,n+1}=\sum\limits_{j}h_{ij}\omega_j$. Let $x=\sum\limits_{A}\tau_A e_A$, $(h^{ik})=(h_{ik})^{-1}$, and denote by $u=-\langle x,e_{n+1}\rangle$ the support function and by $r=\langle x,x\rangle^{\frac{1}{2}}$ the radial function of $M$, then we obtain
\begin{align}\label{2.6}
    \tau_{n+1}=-u,\quad r^2=\sum\limits_{i}\tau_i^2+u^2,\quad 
    dr=\frac{1}{r}\sum\limits_{i}\tau_i\omega_i.
\end{align}
We assign notations $\Bar{\omega}_{n+1,i},\ \Bar{h}_{ij},\ \Bar{h}^{ij},\ \Bar{\tau}_A,\ \Bar{u}$ and $\Bar{r}$ for hypersurface $\Bar{M}$ as well.

Notice that $\omega_{n+1,i}=\langle de_{n+1},e_i\rangle=\Bar{\omega}_{n+1,i}$. Let $a_{ij}=\sum\limits_k h^{ik}\Bar{h}_{kj}$, we have
\begin{align}\label{2.7}
    \omega_i=\sum\limits_j a_{ij}\Bar{\omega}_j.
\end{align}

Consider the differential form of degree $n-1$:
\begin{align}\label{2.8}
    \Phi_0=(x,\Bar{x},\underbrace{d\Bar{x},\ldots,d\Bar{x}}_{n-1}),
\end{align}
where $(\cdot,\ldots,\cdot)$ is a determinant of order $n+1$, whose columns are the components of the respective vectors or vector-valued differential forms, with the convention that in the expansion of the determinant the multiplication of differential forms is in the sense of exterior multiplication. Then applying exterior differentiation gives
\begin{equation}\label{2.9}
\begin{split}
    d\Phi_0 &=(x,d\Bar{x},\underbrace{d\Bar{x},\ldots,d\Bar{x}}_{n-1})+(dx,\Bar{x},\underbrace{d\Bar{x},\ldots,d\Bar{x}}_{n-1})\\
    &=(\sum_i \tau_i e_i-u e_{n+1},\sum_i\Bar{\omega}_i e_i,\underbrace{\sum_i\Bar{\omega}_i e_i,\ldots,\sum_i\Bar{\omega}_i e_i}_{n-1})\\
    &\quad +(\sum_i \omega_i e_i,\sum_i \Bar{\tau}_i e_i-\Bar{u} e_{n+1},\underbrace{\sum_i\Bar{\omega}_i e_i,\ldots,\sum_i\Bar{\omega}_i e_i}_{n-1})\\
    &=-(-1)^n u\sum \delta_{i_1,\ldots,i_n}^{1,\ldots,n}\Bar{\omega}_{i_1}\wedge\ldots\wedge \Bar{\omega}_{i_n}
    +(-1)^n \Bar{u}\sum \delta_{i_1,\ldots,i_n}^{1,\ldots,n}\omega_{i_1}\wedge\Bar{\omega}_{i_2}\wedge\ldots\wedge \Bar{\omega}_{i_n}\\
    &=(-1)^n n!(-u+\Bar{u}P_1(a_{ij}))d\Bar{\mu},
\end{split}
\end{equation}
where $d\Bar{\mu}=\Bar{\omega}_1\wedge\ldots\wedge\Bar{\omega}_n$. Since $M$ and $\Bar{M}$ are closed and strictly convex, we can identify $M,\Bar{M}$ with $\mathbb{S}^n$ by use of Gauss maps. Then previously mentioned functions and differential forms are now defined on $\mathbb{S}^n$. Therefore, integrating (\ref{2.9}) over $\mathbb{S}^n$ yields
\begin{align}\label{2.10}
    \int_{\mathbb{S}^n}(\Bar{u}P_1(a_{ij})-u)d\Bar{\mu}=0.
\end{align}

Similarly, we consider a family of differential forms as follows:
\begin{align}\label{2.11}
    \Phi_{k}=(x,\Bar{x},\underbrace{dx,\ldots,dx}_{k},\underbrace{d\Bar{x},\ldots,d\Bar{x}}_{n-k-1}),\quad k=1,\ldots,n-1.
\end{align}
Applying exterior differentiation gives
\begin{align}\label{2.12}
    d\Phi_k=(-1)^n n!(\Bar{u}P_{k+1}(a_{ij})-u P_{k}(a_{ij}))d\Bar{\mu}.
\end{align}
Then integrating (\ref{2.12}) over $\mathbb{S}^n$ yields
\begin{align}\label{2.13}
    \int_{\mathbb{S}^n}(\Bar{u}P_{k+1}(a_{ij})-u P_{k}(a_{ij}))d\Bar{\mu}=0.
\end{align}

In conclusion, we obtain the following integral formulas:
\begin{lem}[\cite{Ch59}]\label{Lem 2.3-1}
Let $x:M^n\to\mathbb{R}^{n+1}$ and $\Bar{x}:\Bar{M}^n\to\mathbb{R}^{n+1}$ be two closed smooth strictly convex hypersurfaces. Then, for $k=0,\ldots,n-1$
\begin{align}
    &\int_{\mathbb{S}^n}(\Bar{u}P_{k+1}(a_{ij})-u P_{k}(a_{ij}))d\Bar{\mu}=0, \label{2.14}\\
    &\int_{\mathbb{S}^n}(u P_{k+1}(a^{ij})-\Bar{u} P_{k}(a^{ij}))d\mu=0, \label{2.15}
\end{align}
where $a_{ij}=\sum\limits_s h^{is}\Bar{h}_{sj}$ and $a^{ij}=\sum\limits_s \Bar{h}^{is}h_{sj}$.
\end{lem}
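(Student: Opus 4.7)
The plan is to construct, for each $k=0,\ldots,n-1$, a differential $(n-1)$-form $\Phi_k$ on $\mathbb{S}^n$ (via the Gauss map identifications $M\cong \mathbb{S}^n\cong \Bar{M}$) whose exterior derivative recovers the integrand of \eqref{2.14}, and then invoke Stokes' theorem on the closed manifold $\mathbb{S}^n$. Concretely, I would take
\[
\Phi_k = (x,\Bar{x},\underbrace{dx,\ldots,dx}_{k},\underbrace{d\Bar{x},\ldots,d\Bar{x}}_{n-k-1}),
\]
where the parentheses denote a determinant of order $n+1$ whose columns are the components of the listed vectors or vector-valued forms relative to the frame $\{e_1,\ldots,e_{n+1}\}$, multiplied exteriorly. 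Strict convexity of $M$ and $\Bar{M}$ is what makes the Gauss maps diffeomorphisms, allowing everything to be transported to $\mathbb{S}^n$ so that integration is well-defined.

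The key computation is $d\Phi_k$. Only the first and second columns are undifferentiated, so exterior differentiation produces two determinantal terms, one replacing $x$ by $dx=\sum_i \omega_i e_i$ and one replacing $\Bar{x}$ by $d\Bar{x}=\sum_i \Bar{\omega}_i e_i$. Decomposing $x=\sum_i\tau_i e_i - u\,e_{n+1}$ and $\Bar{x}=\sum_i\Bar{\tau}_i e_i-\Bar{u}\,e_{n+1}$, every tangential $\tau_i e_i$ (resp.\ $\Bar{\tau}_i e_i$) contribution must vanish: the resulting determinant would then have $n+1$ columns all lying in the $n$-dimensional subspace $\mathrm{span}\{e_1,\ldots,e_n\}$. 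What survives comes solely from the normal components $-u\,e_{n+1}$ and $-\Bar{u}\,e_{n+1}$, leaving a generalized Kronecker contraction of $k$ copies of $\omega$ and $n-k$ copies of $\Bar{\omega}$ (or vice versa). Applying $\omega_i=\sum_j a_{ij}\Bar{\omega}_j$, this contraction is precisely the normalized mixed elementary symmetric function of $(a_{ij})$, giving the clean formula
\[
d\Phi_k = (-1)^n n!\,\bigl(\Bar{u}\,P_{k+1}(a_{ij}) - u\,P_{k}(a_{ij})\bigr)\,d\Bar{\mu}.
\]

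Integrating and applying Stokes' theorem on the closed manifold $\mathbb{S}^n$ yields \eqref{2.14} immediately. Identity \eqref{2.15} then follows by symmetry: interchanging the roles of $M$ and $\Bar{M}$ throughout (equivalently, swapping $x\leftrightarrow\Bar{x}$ in $\Phi_k$) reverses the roles of $(h_{ij})$ and $(\Bar{h}_{ij})$, replaces $a_{ij}$ by $a^{ij}=\sum_s \Bar{h}^{is}h_{sj}$, sends $d\Bar{\mu}$ to $d\mu$, and interchanges $u$ and $\Bar{u}$. I expect the main technical hurdle to be the combinatorial bookkeeping confirming that the determinant expansion produces exactly $P_k(a_{ij})$ (the \emph{normalized} mixed function, with the correct factorial and sign) rather than $\sigma_k$ or the transpose contraction; this requires careful tracking of the $(-1)^n n!$ prefactor arising from the generalized Kronecker delta and from the column permutations needed to line up the $e_{n+1}$ column with the tangential $\omega_i\wedge\Bar{\omega}_j$ columns.
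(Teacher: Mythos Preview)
Your proposal is correct and follows exactly the same approach as the paper: defining the determinantal $(n-1)$-forms $\Phi_k=(x,\Bar{x},\underbrace{dx,\ldots,dx}_{k},\underbrace{d\Bar{x},\ldots,d\Bar{x}}_{n-k-1})$, observing that only the normal components of $x$ and $\Bar{x}$ survive in $d\Phi_k$, applying $\omega_i=\sum_j a_{ij}\Bar{\omega}_j$ to obtain $d\Phi_k=(-1)^n n!(\Bar{u}P_{k+1}(a_{ij})-uP_k(a_{ij}))\,d\Bar{\mu}$, and integrating via Stokes' theorem. The paper likewise obtains \eqref{2.15} by swapping the roles of $M$ and $\Bar{M}$.
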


In particular, when $\Bar{M}$ is the unit sphere $\mathbb{S}^n$, we have $\Bar{u}=1$ and $(a^{ij})=(h_{ij})$. Note that the eigenvalues of $(h_{ij})$ are the principal curvatures $\{\kappa_i\}$ of $M$, then $P_k(a^{ij})$ is the normalized $k$-th mean curvature $P_k(\kappa)$ of $M$. Hence, (\ref{2.15}) reduces to Minkowski formulas:
\begin{align}\label{2.16}
    \int_{M}(u P_{k+1}(\kappa)-P_{k}(\kappa))d\mu=0,\quad k=0,\ldots,n-1.
\end{align}

$\ $

Next, we consider the following differential forms:
\begin{align}\label{2.17}
     \Phi_{k,l}=(x,\Bar{x},\underbrace{dx,\ldots,dx}_{k},\underbrace{de_{n+1},\ldots,de_{n+1}}_{n-k-l-1},\underbrace{d\Bar{x},\ldots,d\Bar{x}}_{l}),\quad 0\le k+l\le n-1.
\end{align}
Denote by $d\sigma$ the standard area element of $\mathbb{S}^n$, it follows that $d\sigma=\omega_{1,n+1}\wedge\ldots\wedge\omega_{n,n+1}$, and then
$$d\mu=\det(h^{ij})d\sigma,\quad d\Bar{\mu}=\det(\Bar{h}^{ij})d\sigma.$$
Applying exterior differentiation to (\ref{2.17}) gives
\begin{equation}\label{2.18}
\begin{split}
    d\Phi_{k,l} 
    &=-(-1)^{k+l+1} (n-k-l-1)! u\sum\delta_{i_1,\ldots i_{k+l+1}}^{j_1,\ldots j_{k+l+1}} h^{i_1 j_1}\cdots h^{i_k j_k}\Bar{h}^{i_{k+1}j_{k+1}}\cdots\Bar{h}^{i_{k+l+1}j_{k+l+1}}d\sigma\\
    &\quad +(-1)^{k+l+1} (n-k-l-1)!\Bar{u}\sum\delta_{i_1,\ldots i_{k+l+1}}^{j_1,\ldots j_{k+l+1}} h^{i_1 j_1}\cdots h^{i_{k+1} j_{k+1}}\Bar{h}^{i_{k+2}j_{k+2}}\cdots\Bar{h}^{i_{k+l+1}j_{k+l+1}}d\sigma\\
    &=(-1)^{k+l+1} n! \left(\Bar{u}P_{k+1,l}(h^{ij},\Bar{h}^{ij})-u P_{k,l+1}(h^{ij},\Bar{h}^{ij})\right)d\sigma.
\end{split}  
\end{equation}
Hence, integrating (\ref{2.18}) over $\mathbb{S}^n$ yields

\begin{lem}[\cite{Ch59}]\label{Lem 2.3-2}
Let $x:M^n\to\mathbb{R}^{n+1}$ and $\Bar{x}:\Bar{M}^n\to\mathbb{R}^{n+1}$ be two closed smooth strictly convex hypersurfaces. Then, for $0\le k+l\le n-1$
\begin{align}\label{2.19}
    \int_{\mathbb{S}^n}\left(\Bar{u}P_{k+1,l}(h^{ij},\Bar{h}^{ij})-u P_{k,l+1}(h^{ij},\Bar{h}^{ij})\right)d\sigma=0.  
\end{align}
\end{lem}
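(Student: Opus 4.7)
The plan is to follow the template set up in Chern's derivation of Lemma \ref{Lem 2.3-1}, namely to exhibit a vector-valued differential form of degree $n-1$ on $\mathbb{S}^n$ (identified with $M$ and $\Bar{M}$ via their Gauss maps) whose exterior derivative, after integration, yields the desired identity (\ref{2.19}) via Stokes' theorem. The new feature, compared with Lemma \ref{Lem 2.3-1}, is the insertion of $de_{n+1}$-columns alongside the $dx$- and $d\Bar{x}$-columns: the resulting ``identity matrix'' slots will produce the mixed symmetric functions $P_{k+1,l}(h^{ij},\Bar{h}^{ij})$ and $P_{k,l+1}(h^{ij},\Bar{h}^{ij})$, rather than the $P_k(a_{ij})$ that appeared previously.

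Concretely, I would take $\Phi_{k,l}$ as defined in (\ref{2.17}), a vector-valued $(n-1)$-form, and compute $d\Phi_{k,l}$. Since $d^2=0$, only the terms arising from differentiating the first two columns ($x$ and $\Bar{x}$) survive. Writing $x=\sum_i \tau_i e_i - u e_{n+1}$ and noting that every other column of the resulting $(n+1)\times(n+1)$ determinant takes values in $\mathrm{span}(e_1,\ldots,e_n)$, only the $-ue_{n+1}$-component of $x$ contributes in the first piece: the $\tau_i e_i$-components produce determinants with at most $n$ distinct frame vectors and hence vanish. The analogous reduction holds when differentiating $\Bar{x}$, isolating the $-\Bar{u}e_{n+1}$-component.

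After these reductions, I would expand each determinant using $\omega_i=\sum_j h^{ij}\omega_{j,n+1}$, $\Bar{\omega}_i=\sum_j \Bar{h}^{ij}\omega_{j,n+1}$, and $de_{n+1}=\sum_i \omega_{i,n+1}e_i$, which rewrites everything as a scalar multiple of $d\sigma=\omega_{1,n+1}\wedge\ldots\wedge\omega_{n,n+1}$. The coefficient of $d\sigma$ in the first piece is then a generalized Kronecker delta contracted with $k+1$ copies of $h^{ij}$, $n-k-l-1$ Kronecker deltas (one per $de_{n+1}$-column), and $l$ copies of $\Bar{h}^{ij}$; invoking the identity $P_n(A^1,\ldots,A^{k+1},Id,\ldots,Id,\Bar{A}^1,\ldots,\Bar{A}^l)=P_{k+l+1}(A^1,\ldots,A^{k+1},\Bar{A}^1,\ldots,\Bar{A}^l)$ from Section \ref{sec:2.1} collapses this to a multiple of $P_{k+1,l}(h^{ij},\Bar{h}^{ij})$. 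The second piece produces $P_{k,l+1}(h^{ij},\Bar{h}^{ij})$ analogously, and collecting the signs and factorials gives (\ref{2.18}). Integrating over the closed manifold $\mathbb{S}^n$ and applying Stokes' theorem then yields (\ref{2.19}).

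The main obstacle I anticipate is the careful bookkeeping of signs and combinatorial factors when expanding the $(n+1)\times(n+1)$ determinant and identifying the Kronecker deltas from the $de_{n+1}$-columns with the identity-matrix slots in the definitions of $P_{k+1,l}$ and $P_{k,l+1}$. The expansion pattern, however, is exactly the one used in (\ref{2.9}) and (\ref{2.12}) for Lemma \ref{Lem 2.3-1}; once it is verified in the two pure cases considered there, extending to mixed $k,l$ with intervening $de_{n+1}$-columns is essentially a mechanical bookkeeping exercise, and the overall sign $(-1)^{k+l+1}$ in (\ref{2.18}) is determined by how many columns of $dx$ and $d\Bar{x}$ one has to pass through to move $e_{n+1}$ into its natural slot.
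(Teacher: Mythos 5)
Your proposal is correct and follows essentially the same route as the paper: the paper's own derivation (equations (\ref{2.17})--(\ref{2.18})) is precisely the computation of $d\Phi_{k,l}$ by isolating the $e_{n+1}$-components of $x$ and $\bar{x}$, rewriting in terms of $h^{ij}$, $\bar{h}^{ij}$ and $d\sigma$, and integrating via Stokes. Two cosmetic slips worth fixing: $\Phi_{k,l}$ is a scalar-valued (not vector-valued) $(n-1)$-form since it is literally a determinant of column vectors; and your labeling of the ``first piece'' flips between the reduction step (where you attach it to the $-u\,e_{n+1}$-component of $x$, hence $k$ copies of $dx$, giving $P_{k,l+1}$) and the expansion step (where you attach it to $k+1$ copies of $h^{ij}$, giving $P_{k+1,l}$, which actually pairs with $\bar u$) — the mathematics is right in both places, but the two labelings are inconsistent with each other.
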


Generally, by applying the approach in \cite{Gu10}, we can generalize integral formulas (\ref{2.19}) to the cases where $u$ and $\Bar{u}$ are $C^2$ functions.
\begin{lem}\label{Lem 2.3-3}
Let $u$ and $\Bar{u}$ be two $C^2$ functions on $\mathbb{S}^n$. Denote $(h^{ij})=(u_{ij}+u\delta_{ij})$ and $(\Bar{h}^{ij})=(\Bar{u}_{ij}+\Bar{u}\delta_{ij})$. Then, for $0\le k+l\le n-1$
\begin{align}\label{2.20}
    \int_{\mathbb{S}^n}\left(\Bar{u}P_{k,l}(h^{ij},\Bar{h}^{ij})- P_{k,l+1}(h^{ij},\Bar{h}^{ij})\right)d\sigma=0,  
\end{align}
\begin{align}\label{2.21}
    \int_{\mathbb{S}^n}\left(u P_{k,l}(h^{ij},\Bar{h}^{ij})-P_{k+1,l}(h^{ij},\Bar{h}^{ij})\right)d\sigma=0,  
\end{align}
and
\begin{align}\label{2.22}
    \int_{\mathbb{S}^n}\left(\Bar{u}P_{k+1,l}(h^{ij},\Bar{h}^{ij})-u P_{k,l+1}(h^{ij},\Bar{h}^{ij})\right)d\sigma=0.  
\end{align}
\end{lem}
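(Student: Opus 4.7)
The plan is to extend Chern's differential-form derivation of Lemma \ref{Lem 2.3-2} to the $C^2$ setting by replacing genuine hypersurface positions with \emph{formal} position vectors on $\mathbb{S}^n$, so that no convexity hypothesis on $u$ or $\bar u$ is required. For $C^2$ functions $u,\bar u$ on $\mathbb{S}^n$ I set
\[
X := u\,\xi + \nabla u,\qquad \bar X := \bar u\,\xi + \nabla\bar u,
\]
where $\xi\colon\mathbb{S}^n\hookrightarrow\mathbb{R}^{n+1}$ is the position vector. A direct computation using the Gauss--Weingarten relations for $\mathbb{S}^n\subset\mathbb{R}^{n+1}$ yields $dX=\sum_{i,j}h^{ij}\omega_j e_i$ purely tangentially, and analogously $d\bar X=\sum_{i,j}\bar h^{ij}\omega_j e_i$. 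This is exactly the structural identity that drove Chern's algebra in (\ref{2.18}), and it holds regardless of whether $(h^{ij})$ or $(\bar h^{ij})$ is positive-definite.

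I then introduce three $(n-1)$-forms on $\mathbb{S}^n$:
\begin{align*}
    \Phi^{a}_{k,l} &= (\bar X,\,\xi,\,\underbrace{dX,\ldots,dX}_{k},\,\underbrace{d\bar X,\ldots,d\bar X}_{l},\,\underbrace{d\xi,\ldots,d\xi}_{n-k-l-1}),\\
    \Phi^{b}_{k,l} &= (X,\,\xi,\,\underbrace{dX,\ldots,dX}_{k},\,\underbrace{d\bar X,\ldots,d\bar X}_{l},\,\underbrace{d\xi,\ldots,d\xi}_{n-k-l-1}),\\
    \Phi^{c}_{k,l} &= (X,\,\bar X,\,\underbrace{dX,\ldots,dX}_{k},\,\underbrace{d\bar X,\ldots,d\bar X}_{l},\,\underbrace{d\xi,\ldots,d\xi}_{n-k-l-1}),
\end{align*}
where the parentheses denote the $\mathbb{R}^{n+1}$-valued determinant with columns treated as vector-valued differential forms. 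Here $\Phi^{c}_{k,l}$ is the direct analogue of Chern's $\Phi_{k,l}$ from (\ref{2.17}), while $\Phi^{a}_{k,l}$ and $\Phi^{b}_{k,l}$ replace one of the position vectors with the fixed vector $\xi$.

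Applying $d$ column-by-column and using $d\circ d=0$ on the 1-form columns leaves only two terms per form (from the two 0-form columns). Decomposing $X=u\xi+\nabla u$ and $\bar X=\bar u\xi+\nabla\bar u$, the pieces coming from $\nabla u$ and $\nabla\bar u$ are sums $\sum_p u_p(e_p,\ldots)$ or $\sum_p\bar u_p(e_p,\ldots)$ whose determinants contain a row of zeros (the $(n{+}1)$th row, since every other column is tangential) and so vanish identically. Only the $u\xi$ and $\bar u\xi$ pieces survive. Expanding these $\xi$-columns along the $(n{+}1)$th row reduces each term to an $n\times n$ determinant of tangential 1-form columns, and the key algebraic identity
\[
{\det}_{n\times n}\!\bigl(d\xi,(dX)^k,(d\bar X)^l,(d\xi)^{n-k-l-1}\bigr) = n!\,P_{k,l}(h,\bar h)\,d\sigma,
\]
together with its variants obtained by replacing the first $d\xi$ by $dX$ (giving $n!\,P_{k+1,l}$) or by $d\bar X$ (giving $n!\,P_{k,l+1}$), follows from the definition of $P_{k,l}$ combined with the standard contraction $\sum_{i_1}\delta^{i_1 i_2\ldots i_p}_{i_1 s_2\ldots s_p}=(n-p+1)\delta^{i_2\ldots i_p}_{s_2\ldots s_p}$. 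Collecting the cofactor-expansion signs yields
\begin{align*}
    d\Phi^a_{k,l} &= (-1)^n n!\bigl(\bar u\,P_{k,l}(h,\bar h)-P_{k,l+1}(h,\bar h)\bigr)\,d\sigma,\\
    d\Phi^b_{k,l} &= (-1)^n n!\bigl(u\,P_{k,l}(h,\bar h)-P_{k+1,l}(h,\bar h)\bigr)\,d\sigma,\\
    d\Phi^c_{k,l} &= (-1)^n n!\bigl(u\,P_{k,l+1}(h,\bar h)-\bar u\,P_{k+1,l}(h,\bar h)\bigr)\,d\sigma,
\end{align*}
and integrating over the closed manifold $\mathbb{S}^n$ gives (\ref{2.20}), (\ref{2.21}) and (\ref{2.22}) by Stokes' theorem.

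The main obstacle is the sign and combinatorial bookkeeping in the cofactor expansions, and in particular the verification of the ``zero-row'' cancellation that kills the $\nabla u$ and $\nabla\bar u$ contributions: it is this step that allows the $C^2$ case to reduce cleanly to the same algebra as the smooth strictly convex case of Lemma \ref{Lem 2.3-2}, rather than producing unwanted divergence-type correction terms. Once this is verified, the Stokes-theorem step is immediate. As a sanity check one could, alternatively, treat (\ref{2.20})--(\ref{2.22}) as polynomial identities in $(u,\nabla u,\nabla^2 u,\bar u,\nabla\bar u,\nabla^2\bar u)$: after the shifts $u\mapsto u+C_1,\,\bar u\mapsto\bar u+C_2$ with $C_1,C_2$ large the Hessians become positive-definite, hence realizable as genuine support functions, so the identities on an open set of $C^2$ data propagate to all $C^2$ data by polynomial continuation.
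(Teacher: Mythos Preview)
Your proposal is correct. The paper itself does not spell out a proof of Lemma~\ref{Lem 2.3-3}; it only states that the identities follow ``by applying the approach in \cite{Gu10}''. That approach is direct integration by parts on $\mathbb{S}^n$: one observes that the partial linearization $T^{ij}$ of $\sigma_{k+l+1}$ in the $\bar h$-slot is divergence-free on the sphere, writes $P_{k,l+1}(h,\bar h)$ as a constant times $T^{ij}(\bar u_{ij}+\bar u\delta_{ij})$, and integrates the $\bar u_{ij}$-part away to obtain $\int P_{k,l+1}\,d\sigma = c\int \bar u\,P_{k,l}\,d\sigma$, which is \eqref{2.20}; \eqref{2.21} and \eqref{2.22} follow in the same fashion.

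Your route is different: you stay entirely inside Chern's exterior-calculus framework (the one the paper uses for Lemma~\ref{Lem 2.3-2}) and upgrade it to the $C^2$ setting by replacing the position vectors with the formal maps $X=u\xi+\nabla u$, $\bar X=\bar u\xi+\nabla\bar u$. The crucial observation that $dX=\sum_{i,j}h^{ij}\omega_j e_i$ is purely tangential, together with your ``zero-row'' argument killing the $\nabla u,\nabla\bar u$ contributions, is exactly what replaces the divergence-free step in \cite{Gu10}. Introducing $\Phi^a,\Phi^b$ with the extra $\xi$-column is a nice touch that yields \eqref{2.20} and \eqref{2.21} directly, not just the mixed identity \eqref{2.22} that Chern's original $\Phi_{k,l}$ gives. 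The two approaches are equivalent in content---Stokes' theorem on the one side versus integration by parts against a divergence-free tensor on the other---but yours has the advantage of being a seamless continuation of the paper's own Section~\ref{sec:2.3} and of making transparent \emph{why} no convexity is needed. One small caveat on your closing polynomial-continuation remark: shifting $u\mapsto u+C_1$, $\bar u\mapsto\bar u+C_2$ and invoking Lemma~\ref{Lem 2.3-2} directly yields only \eqref{2.22}; to get \eqref{2.20} and \eqref{2.21} this way you would first need those identities in the strictly convex case, which are not recorded in Lemma~\ref{Lem 2.3-2}. Your differential-form argument, however, handles all three at once.
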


$\ $

Finally, we derive some new integral formulas that contain radial functions.

Let $\eta:(0,+\infty)\to(0,+\infty)$ be a $C^1$ function. For any $k=1,\ldots,n-1$, we calculate the exterior differentiation of $\eta(\Bar{r})\Phi_k$:
\begin{align*}
    d(\eta(\Bar{r})\Phi_k)=\eta(\Bar{r})d\Phi_k+\frac{\eta'(\Bar{r})}{\Bar{r}}\sum_i\Bar{\tau}_i\Bar{\omega}_i\wedge \Phi_k.
\end{align*}
Using (\ref{2.6}), (\ref{2.7}) and (\ref{2.11}), we have
\begin{align*}
    \Phi_k 
    &=(\sum_i \tau_i e_i-u e_{n+1},\sum_i \Bar{\tau}_i e_i-\Bar{u} e_{n+1},\sum_i\Bar{\omega}_i e_i,\underbrace{\sum_i\omega_i e_i,\ldots,\sum_i\omega_i e_i}_{k},\underbrace{\sum_i\Bar{\omega}_i e_i,\ldots,\sum_i\Bar{\omega}_i e_i}_{n-k-1})\\
    &=(-1)^n \sum \delta_{i_1,\ldots,i_n}^{1,\ldots,n}(-u\Bar{\tau}_{i_1}+\Bar{u}\tau_{i_1})\underbrace{\omega_{i_2}\wedge\ldots\wedge\omega_{i_{k+1}}}_{k}\wedge\underbrace{\Bar{\omega}_{i_{k+2}}\wedge\ldots\wedge\Bar{\omega}_{i_n}}_{n-k-1} \\
    &=(-1)^n \sum \delta_{i_1,\ldots,i_n}^{1,\ldots,n}(-u\Bar{\tau}_{i_1}+\Bar{u}\tau_{i_1}) a_{i_2 j_2}\cdots a_{i_{k+1} j_{k+1}} \underbrace{\Bar{\omega}_{j_{2}}\wedge\ldots\wedge\Bar{\omega}_{j_{k+1}}}_{k}\underbrace{\Bar{\omega}_{i_{k+2}}\wedge\ldots\wedge\Bar{\omega}_{i_n}}_{n-k-1}\\
    &=(-1)^n \sum \delta_{i_1,j_2\ldots,j_{k+1},i_{k+2},\ldots,i_n}^{1,2,\ldots,k+1,k+2,\ldots,n}(\Bar{u}\tau_{i_1}-u\Bar{\tau}_{i_1}) a_{i_2 j_2}\cdots a_{i_{k+1} j_{k+1}}\underbrace{\Bar{\omega}_{i_{2}}\wedge\ldots\wedge\Bar{\omega}_{i_n}}_{n-1}.
\end{align*}
Combining this with (\ref{2.12}), we obtain
\begin{equation}\label{2.23}
\begin{split}
    d(\eta(\Bar{r})\Phi_k)
    &=(-1)^n n!\eta(\Bar{r})(\Bar{u}P_{k+1}(a_{ij})-u P_{k}(a_{ij}))d\Bar{\mu}\\
    &\quad +(-1)^n\frac{\eta'(\Bar{r})}{\Bar{r}} \sum \delta_{i_1,j_2\ldots,j_{k+1},i_{k+2},\ldots,i_n}^{1,2,\ldots,{k+1},k+2,\ldots,n} \Bar{\tau}_i(\Bar{u}\tau_{i_1}-u\Bar{\tau}_{i_1}) a_{i_2 j_2}\cdots a_{i_{k+1} j_{k+1}}\Bar{\omega}_i\wedge\underbrace{\Bar{\omega}_{i_{2}}\wedge\ldots\wedge\Bar{\omega}_{i_n}}_{n-1}\\
    &=(-1)^n n!\eta(\Bar{r})(\Bar{u}P_{k+1}(a_{ij})-u P_{k}(a_{ij}))d\Bar{\mu}\\
    &\quad +(-1)^n (n-k-1)!\frac{\eta'(\Bar{r})}{\Bar{r}}\sum \delta_{i_1,i_2\ldots,i_{k+1}}^{i_1,j_2,\ldots,j_{k+1}} (\Bar{u}\Bar{\tau}_{i_1}\tau_{i_1}-u\Bar{\tau}_{i_1}^2) a_{i_2 j_2}\cdots a_{i_{k+1} j_{k+1}} d\Bar{\mu}.
\end{split}
\end{equation}
When $k=0$, applying exterior differentiation of $\eta(\Bar{r})\Phi_0$ gives
\begin{equation}\label{2.24}
    d(\eta(\Bar{r})\Phi_0)
    =(-1)^n n!\eta(\Bar{r})(\Bar{u}P_1(a_{ij})-u)d\Bar{\mu}
    +(-1)^n (n-1)!\frac{\eta'(\Bar{r})}{\Bar{r}}\sum_i(\Bar{u}\Bar{\tau}_{i_1}\tau_{i_1}-u\Bar{\tau}_{i_1}^2)d\Bar{\mu}.
\end{equation}

Therefore, integrating (\ref{2.23}) and (\ref{2.24}) over $\mathbb{S}^n$ yields
\begin{lem}\label{Lem 2.3-4}
Let $x:M^n\to\mathbb{R}^{n+1}$ and $\Bar{x}:\Bar{M}^n\to\mathbb{R}^{n+1}$ be two closed smooth strictly convex hypersurfaces. Then, for $k=0,\ldots,n-1$
\begin{equation}\label{2.25}
\begin{split}
     &\int_{\mathbb{S}^n}\eta(\Bar{r})(\Bar{u}P_{k+1}(a_{ij})-u P_{k}(a_{ij}))d\Bar{\mu}\\
    =&\frac{(n-k-1)!}{n!}\int_{\mathbb{S}^n}\frac{\eta'(\Bar{r})}{\Bar{r}}\sum \delta_{i_1,i_2\ldots,i_{k+1}}^{i_1,j_2,\ldots,j_{k+1}} (u\Bar{\tau}_{i_1}^2-\Bar{u}\Bar{\tau}_{i_1}\tau_{i_1}) a_{i_2 j_2}\cdots a_{i_{k+1} j_{k+1}} d\Bar{\mu},
\end{split}
\end{equation}
and
\begin{equation}\label{2.26}
\begin{split}
     &\int_{\mathbb{S}^n}\eta(r)(u P_{k+1}(a^{ij})-\Bar{u} P_{k}(a^{ij}))d\mu\\
    =&\frac{(n-k-1)!}{n!}\int_{\mathbb{S}^n}\frac{\eta'(r)}{r}\sum \delta_{i_1,i_2\ldots,i_{k+1}}^{i_1,j_2,\ldots,j_{k+1}} (\Bar{u}\tau_{i_1}^2-u \tau_{i_1}\Bar{\tau}_{i_1}) a^{i_2 j_2}\cdots a^{i_{k+1} j_{k+1}} d\mu,
\end{split}
\end{equation}
where the terms $ a_{i_2 j_2}\cdots a_{i_{k+1} j_{k+1}}$ and $a^{i_2 j_2}\cdots a^{i_{k+1} j_{k+1}}$ on the right hand sides vanish when $k=0$.
\end{lem}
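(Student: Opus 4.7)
The plan is to apply Stokes' theorem to the differential form $\eta(\bar r)\Phi_k$, building directly on the computation already displayed in the lead-up to the statement. First I would observe that equations (2.23) and (2.24) are essentially the entire proof of (2.25): they compute the exterior derivative of $\eta(\bar r)\Phi_k$ (and its $k=0$ counterpart) pointwise on $\mathbb{S}^n$, after identifying $M$ and $\bar M$ with $\mathbb{S}^n$ via their Gauss maps. Since $\mathbb{S}^n$ is closed, Stokes' theorem gives $\int_{\mathbb{S}^n}d(\eta(\bar r)\Phi_k)=0$; rearranging and dividing by $(-1)^n n!$ immediately yields (2.25).

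Concretely, the computation I would carry out is the Leibniz expansion
\[
d(\eta(\bar r)\Phi_k) \;=\; \eta(\bar r)\,d\Phi_k \;+\; d\eta(\bar r)\wedge \Phi_k,
\]
where the first term is handled by formula (2.12) and contributes the $P_{k+1}(a_{ij})$ and $P_k(a_{ij})$ pieces. For the second term I would use (2.6) applied to $\bar M$, which gives $d\eta(\bar r)=\tfrac{\eta'(\bar r)}{\bar r}\sum_i \bar\tau_i\bar\omega_i$, and then rewrite $\Phi_k$ in the $\bar\omega_i$ frame by expanding the determinant, pulling out the coefficients $\bar u\tau_{i_1}-u\bar\tau_{i_1}$ from the first two columns, and converting each factor $\omega_i=\sum_j a_{ij}\bar\omega_j$ from (2.7). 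Wedging with $\sum_i\bar\tau_i\bar\omega_i$, contracting indices, and relabeling via the antisymmetry of the generalized Kronecker delta produces precisely the combinatorial expression on the right-hand side of (2.25). The case $k=0$ is the same argument with an empty product of $a$'s, corresponding to (2.24).

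For (2.26) I would run the mirror argument, replacing $\Phi_k$ by $\Psi_k=(\bar x,x,d\bar x,\ldots,d\bar x,dx,\ldots,dx)$ and $\eta(\bar r)$ by $\eta(r)$, and using the inverse relation $\bar\omega_i=\sum_j a^{ij}\omega_j$ to push everything into the $\omega_i$ frame. Stokes' theorem then gives the identity with $d\mu$ in place of $d\bar\mu$ and with $a^{ij}$ in place of $a_{ij}$, which is exactly (2.26).

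The only real obstacle is combinatorial bookkeeping: one has to match signs carefully in the wedge product $\bar\omega_i\wedge\bar\omega_{i_2}\wedge\cdots\wedge\bar\omega_{i_n}$, absorb a factor of $(n-k-1)!$ from collapsing the last $n-k-1$ indices into $d\bar\mu$, and verify that the two quantities $\bar u\bar\tau_{i_1}\tau_{i_1}$ and $u\bar\tau_{i_1}^2$ appear with the signs shown (after moving the derivative term to the opposite side of the equation). Once the normalization constant $\tfrac{(n-k-1)!}{n!}$ is confirmed to agree with what emerges from the determinantal expansion, the formulas (2.25) and (2.26) follow; no further analytic input beyond Stokes' theorem is needed.
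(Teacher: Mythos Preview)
Your proposal is correct and follows exactly the paper's approach: the paper computes $d(\eta(\bar r)\Phi_k)$ via the Leibniz rule in (2.23)--(2.24), using (2.12) for the $\eta(\bar r)\,d\Phi_k$ term and the expansion of $\Phi_k$ in the $\bar\omega$-frame (via (2.6) and (2.7)) for the $d\eta(\bar r)\wedge\Phi_k$ term, and then integrates over $\mathbb{S}^n$. The identity (2.26) is indeed obtained by the symmetric argument with the roles of $M$ and $\bar M$ interchanged, just as you describe.
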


Moreover, when $\Bar{M}$ is the unit sphere $\mathbb{S}^n$, then $\Bar{r}=\Bar{u}=1$, $\Bar{\tau}_i=0$ and $(a^{ij})=(h_{ij})$. Using (\ref{2.26}), we obtain
\begin{lem}\label{Lem 2.3-5}
Let $x:M^n\to\mathbb{R}^{n+1}$ be a closed smooth strictly convex hypersurface. Then, for $k=0,\ldots,n-1$
\begin{equation}\label{2.27}
\begin{split}
     &\int_{\mathbb{S}^n}\eta(r)(u P_{k+1}(h_{ij})- P_{k}(h_{ij}))d\mu\\
    =&\frac{(n-k-1)!}{n!}\int_{\mathbb{S}^n}\frac{\eta'(r)}{r}\sum \delta_{i_1,i_2\ldots,i_{k+1}}^{i_1,j_2,\ldots,j_{k+1}} \tau_{i_1}^2 h_{i_2 j_2}\cdots h_{i_{k+1} j_{k+1}} d\mu,
\end{split}
\end{equation}
where the term $h_{i_2 j_2}\cdots h_{i_{k+1} j_{k+1}}$ on the right hand side vanishes when $k=0$.
\end{lem}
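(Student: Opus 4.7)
The plan is to obtain Lemma 2.3-5 as an immediate specialization of Lemma 2.3-4 by taking the auxiliary hypersurface $\bar{M}$ to be the unit sphere $\mathbb{S}^n$. Since Lemma 2.3-4 has already been established in the paper, this reduces to verifying that the right quantities simplify correctly when $\bar{x}$ is the identity embedding of $\mathbb{S}^n$.

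First, I would record the geometric data on $\bar{M}=\mathbb{S}^n$. The position vector coincides (up to the sign conventions chosen above) with the inward unit normal, so the support function is $\bar{u}\equiv 1$ and the radial function is $\bar{r}\equiv 1$. From the identity $\bar{r}^2=\sum_i \bar{\tau}_i^2+\bar{u}^2$ in \eqref{2.6}, this forces $\bar{\tau}_i=0$ for every $i$. Moreover, the second fundamental form of $\mathbb{S}^n$ is the identity, hence $(\bar{h}^{ij})=\delta^{ij}$, so that
\begin{equation*}
a^{ij}=\sum_s \bar{h}^{is}h_{sj}=h_{ij}.
\end{equation*}

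Now I would plug $\bar{u}=1$, $\bar{r}=1$, $\bar{\tau}_i=0$ and $a^{ij}=h_{ij}$ into \eqref{2.26}. The left-hand side becomes exactly $\int_{\mathbb{S}^n}\eta(r)\bigl(uP_{k+1}(h_{ij})-P_k(h_{ij})\bigr)d\mu$. On the right-hand side, the mixed term $u\tau_{i_1}\bar{\tau}_{i_1}$ vanishes because $\bar{\tau}_{i_1}=0$, while the remaining term $\bar{u}\tau_{i_1}^2=\tau_{i_1}^2$ gives the stated integrand. For the boundary case $k=0$, the string $a^{i_2 j_2}\cdots a^{i_{k+1}j_{k+1}}$ is empty, so the corresponding factor $h_{i_2 j_2}\cdots h_{i_{k+1}j_{k+1}}$ drops out, which is precisely the claim in the parenthetical remark.

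There is no real obstacle here: every nontrivial computation has already been carried out to prove Lemma 2.3-4 (in particular the exterior differentiation of $\eta(\bar{r})\Phi_k$ and Stokes' theorem on $\mathbb{S}^n$). The only things to check are the point-set identifications $\bar{u}=\bar{r}=1$, $\bar{\tau}_i=0$, and $a^{ij}=h_{ij}$, which are immediate from the definitions in Section 2.3. Thus the statement follows directly.
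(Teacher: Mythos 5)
Your proposal is correct and follows exactly the paper's own route: the paper likewise obtains Lemma \ref{Lem 2.3-5} by specializing \eqref{2.26} of Lemma \ref{Lem 2.3-4} to $\Bar{M}=\mathbb{S}^n$, using $\Bar{r}=\Bar{u}=1$, $\Bar{\tau}_i=0$ and $(a^{ij})=(h_{ij})$. Nothing is missing.
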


Note that when $\eta\equiv1$, Lemma \ref{Lem 2.3-4} reduces to Lemma \ref{Lem 2.3-1}, and Lemma \ref{Lem 2.3-5} reduces to Minkowski formulas (\ref{2.16}).

\subsection{Spectral estimate}\label{sec:2.4}$\ $

In this subsection, we will use the same notation as in subsection \ref{sec:2.3}. Let $(\mathbb{S}^n,g_0,\nabla)$ denote the unit sphere equipped with its standard round metric and Levi-Civita connection. For a smooth closed strictly convex hypersurface $x:M^n\to\mathbb{R}^{n+1}$, denote by $\nu:M^n\to\mathbb{S}^n$ the Gauss map of $M$. For convenience, we denote $\sigma_k=\sigma_k(h^{ij})$ for $1\le k\le n$ and $dV_n=u\sigma_n d\sigma$. 

The following lemma is the local version of the Alexandrov-Fenchel inequality and can also be regarded as a spectral interpretation of the Brunn-Minkowski inequality. For more details, refer to \cites{An97,ACGL20,KM22,Mil21,IM23}.
\begin{lem}[\cite{An97,ACGL20}]\label{Lem 2.4-1}
Let $f\in C^2(\mathbb{S}^n)$ with $\int_{\mathbb{S}^n}f u\sigma_n d\sigma=0$. Then we have
\begin{equation}\label{2.28}
    n\int_{\mathbb{S}^n}f^2 u\sigma_n d\sigma\le \int_{\mathbb{S}^n}u^2 \sigma_n^{ij}\nabla_i f\nabla_j f d\sigma.
\end{equation}
Equality holds if and only if for some vector $v\in\mathbb{R}^{n+1}$ we have
\begin{equation*}
    f(z)=\langle \frac{z}{u(z)},v\rangle,\quad\forall\ z\in\mathbb{S}^n.
\end{equation*}
\end{lem}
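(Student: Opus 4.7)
The plan is to obtain (\ref{2.28}) as the infinitesimal form of the Brunn-Minkowski inequality, along the one-parameter family of support functions $u_t := u + t\,uf$. This choice of perturbation is forced by the shape of the constraint: the first derivative of volume along it will be precisely $\int f\,u\sigma_n\,d\sigma$, and the hypothesis then turns the leading-order contribution off, leaving the second variation to provide the inequality. Since $A(u) := \nabla^2 u + u g_0 > 0$ and $f \in C^2$, for $|t|$ small $u_t$ remains the support function of a smooth strictly convex body $K_t$, and I let $V(t) := V(K_t) = \tfrac{1}{n+1}\int_{\mathbb{S}^n} u_t \sigma_n(A(u_t))\,d\sigma$. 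Brunn-Minkowski gives the concavity of $V(t)^{1/(n+1)}$ in $t$, hence $V(0)V''(0) \le \tfrac{n}{n+1}(V'(0))^2$ at $t = 0$.

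The key step is then the explicit computation of $V'(0)$ and $V''(0)$ via the two algebraic identities $\sigma_n^{ij} A_{ij} = n\sigma_n$ (Euler) and $\nabla_j \sigma_n^{ij} = 0$ (divergence-free Newton tensor, implicit in Proposition \ref{prop-1}). Integrating by parts twice gives $V'(t) = \int_{\mathbb{S}^n} (uf)\,\sigma_n(A(u_t))\,d\sigma$, so $V'(0) = \int f u\sigma_n\,d\sigma = 0$ by hypothesis. Differentiating once more yields $V''(0) = \int (uf)\,T(uf)\,d\sigma$, where $T(\phi) := \sigma_n^{ij}(\phi_{ij} + \phi\delta_{ij})$ is the linearized Monge-Amp\`ere operator. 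Expanding $(uf)_{ij} + uf\,\delta_{ij} = A_{ij} f + u_i f_j + u_j f_i + u f_{ij}$ gives $T(uf) = n f \sigma_n + 2\sigma_n^{ij} u_i f_j + u \sigma_n^{ij} f_{ij}$, and after multiplying by $uf$ and integrating the $\int u^2 f\,\sigma_n^{ij} f_{ij}\,d\sigma$ piece by parts once more, the two cross terms $\pm 2\int u f \sigma_n^{ij} u_i f_j\,d\sigma$ cancel, leaving
\[
V''(0) = n\int_{\mathbb{S}^n} f^2 u\sigma_n\,d\sigma - \int_{\mathbb{S}^n} u^2 \sigma_n^{ij} f_i f_j\,d\sigma.
\]
Combining this with $V'(0) = 0$ and the concavity above yields (\ref{2.28}).

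For the equality case, $V''(0) = 0$ together with $V'(0) = 0$ forces equality in Minkowski's second inequality $V_0 V_2 \le V_1^2$, so by the Alexandrov-Fenchel rigidity statement for smooth strictly convex bodies one has $uf = \lambda u + \langle v, z\rangle$ for some $\lambda \in \mathbb{R}$ and $v \in \mathbb{R}^{n+1}$. Pairing against $\sigma_n\,d\sigma$ and using the classical identity $\int_{\mathbb{S}^n} z\,\sigma_n\,d\sigma = 0$ (integral of the outer normal over a closed convex hypersurface) together with the hypothesis $\int f u\sigma_n\,d\sigma = 0$ forces $\lambda = 0$, so $f(z) = \langle z/u(z), v\rangle$, matching the claim. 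The principal technical obstacle is the cancellation in the second-variation computation, which requires careful use of the divergence-free property of $\sigma_n^{ij}$ to produce precisely the weights $u\sigma_n$ on one side and $u^2 \sigma_n^{ij}$ on the other; the equality analysis further rests on the Alexandrov-Fenchel rigidity, which is delicate in general but classical in the smooth strictly convex regime considered here.
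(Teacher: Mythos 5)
The paper does not prove this lemma at all --- it is quoted from \cite{An97} and \cite{ACGL20} --- so you are supplying an argument the paper omits. Your proof of the inequality is the standard one from those sources and is correct: with $\phi=uf$ and $u_t=u+t\phi$, the family $K_t$ is a genuine Minkowski linear family for small $|t|$ (since $A(u_t)=A(u)+tA(\phi)>0$ and support functions add under Minkowski sum), so $V(t)^{1/(n+1)}$ is concave; the identities $\sigma_n^{ij}A_{ij}=n\sigma_n$ and $\nabla_j\sigma_n^{ij}=0$ give $V'(0)=\int uf\,\sigma_n\,d\sigma=0$ and, after the cancellation of the cross terms $\pm2\int uf\,\sigma_n^{ij}u_if_j\,d\sigma$ that you correctly identify, $V''(0)=n\int f^2u\sigma_n\,d\sigma-\int u^2\sigma_n^{ij}f_if_j\,d\sigma\le 0$, which is (\ref{2.28}).

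The one place you should be more careful is the equality case. Equality in (\ref{2.28}) says that $\phi=uf$ lies in the kernel of the quadratic form $\phi\mapsto\int\phi\,\sigma_n^{ij}(\phi_{ij}+\phi\delta_{ij})\,d\sigma$ on $\{\int\phi\sigma_n\,d\sigma=0\}$, and the assertion that this kernel is exactly $\{\lambda u+\langle v,z\rangle\}$ is not the equality case of Brunn--Minkowski or of Minkowski's second inequality \emph{for convex bodies} (those only cover the case where $\phi$ is itself a support function, where rigidity means homothety). For a general $C^2$ perturbation $\phi$ it is precisely the equality characterization of the quadratic-form (infinitesimal) version of Minkowski's second inequality, i.e.\ essentially the statement of the lemma itself; invoking ``Alexandrov--Fenchel rigidity'' here is therefore close to circular unless you cite the correct classical result, namely Hilbert's spectral argument: the operator $\phi\mapsto -u\,\sigma_n^{-1}\,\sigma_n^{ij}(A)(\phi_{ij}+\phi\delta_{ij})$ is self-adjoint and elliptic in the appropriate weighted space, its eigenvalue $-1$ has eigenspace spanned by $u$, its eigenvalue $0$ has eigenspace spanned by the linear functions $\langle v,z\rangle$ (translations), and there are no eigenvalues in $(-1,0)$; this is what is proved in \cite{An97} and in \cite{ACGL20} (see also \cite{Sch14}). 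Granting that reference, your final step --- using $\int_{\mathbb{S}^n}z\,\sigma_n\,d\sigma=0$ and the constraint to kill $\lambda$ and conclude $f=\langle z/u,v\rangle$ --- is correct.
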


It is well-known (see e.g. \cite{Sch14}) that the inverse Gauss map $X=\nu^{-1}:\mathbb{S}^n\to M$ is given by
\begin{align*}
    X(z)=u(z)z+\nabla u(z),\quad \forall\ z\in\mathbb{S}^n.
\end{align*}
By substituting some test functions in (\ref{2.28}), we obtain a key inequality as follows:
\begin{lem}\label{Lem 2.4-2}
Assume that $\alpha\in\mathbb{R}$. Then we have
\begin{equation}\label{2.29} 
\begin{split}
    n\int_{\mathbb{S}^n}|X|^{2\alpha+2}dV_n
    & \le 
    n\frac{|\int_{\mathbb{S}^n}|X|^{\alpha}X dV_n|^2}{\int_{\mathbb{S}^n}dV_n}
    +\int_{\mathbb{S}^n}|X|^{2\alpha}u(\Delta u+n u) dV_n\\
    &\quad +(\alpha^2+2\alpha)\int_{\mathbb{S}^n}|X|^{2\alpha-1}u \langle \nabla u,\nabla |X|\rangle dV_n.
\end{split}
\end{equation}
\end{lem}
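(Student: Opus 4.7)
\noindent\textbf{Proof plan for Lemma \ref{Lem 2.4-2}.} The natural strategy is to apply Lemma \ref{Lem 2.4-1} to the components of the vector-valued function $|X|^{\alpha}X$ and then sum. Fix the standard orthonormal basis $\{E_1,\dots,E_{n+1}\}$ of $\mathbb{R}^{n+1}$ and write $X_i=\langle X,E_i\rangle$. For each $i$, set
\[
f_i = |X|^{\alpha}X_i - c_i, \qquad c_i = \frac{\int_{\mathbb{S}^n}|X|^{\alpha}X_i\,dV_n}{\int_{\mathbb{S}^n}dV_n},
\]
so that $\int_{\mathbb{S}^n}f_i\,dV_n=0$. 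Since $M$ encloses the origin, $|X|$ is a positive smooth function and each $f_i\in C^2(\mathbb{S}^n)$, so Lemma \ref{Lem 2.4-1} applies.

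For the left-hand side, I would expand $\sum_i f_i^2 = |X|^{2\alpha+2}-2|X|^{\alpha}\langle c,X\rangle+|c|^2$ with $c=(c_1,\dots,c_{n+1})$ and use the identity $\int_{\mathbb{S}^n}|X|^{\alpha}X\,dV_n = c\int_{\mathbb{S}^n}dV_n$ to collapse the cross term, yielding
\[
\sum_i\int_{\mathbb{S}^n}f_i^2\,dV_n = \int_{\mathbb{S}^n}|X|^{2\alpha+2}\,dV_n - \frac{\bigl|\int_{\mathbb{S}^n}|X|^{\alpha}X\,dV_n\bigr|^2}{\int_{\mathbb{S}^n}dV_n}.
\]
This already accounts for the first two terms of \eqref{2.29} once we multiply by $n$.

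For the right-hand side I would first derive the pointwise identity
\[
\sum_i\nabla_j f_i\,\nabla_k f_i = |X|^{2\alpha}\langle\nabla_j X,\nabla_k X\rangle + (\alpha^2+2\alpha)|X|^{2\alpha}\nabla_j|X|\,\nabla_k|X|,
\]
using only $\sum_i X_i^2=|X|^2$ and $\sum_i X_i\nabla_k X_i = |X|\nabla_k|X|$. Then, working in a local orthonormal frame on $\mathbb{S}^n$ in which $\nabla^{\mathbb{S}^n}_{e_j}e_k=0$ at the point under consideration, I would use the differentiation of the inverse Gauss map $X=uz+\nabla u$ to obtain $\nabla_{e_j}X = h^{jk}e_k$, and recall $\sigma_n^{jk}=\sigma_n h_{jk}$ (where $(h_{jk})=(h^{ij})^{-1}$). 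Two contractions are then needed: the trace identity
\[
\sigma_n^{jk}\langle\nabla_j X,\nabla_k X\rangle = \sigma_n\,h_{jk}h^{jl}h^{kl} = \sigma_n\,\mathrm{tr}(W) = \sigma_n(\Delta u+nu),
\]
and the gradient identity, via $\nabla_j|X|=|X|^{-1}h^{jl}u_l$,
\[
\sigma_n^{jk}\nabla_j|X|\nabla_k|X| = \frac{\sigma_n}{|X|}\langle\nabla u,\nabla|X|\rangle.
\]

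Inserting these into $\sum_i\int u^2\sigma_n^{jk}\nabla_jf_i\nabla_kf_i\,d\sigma$ and recalling $dV_n=u\sigma_n\,d\sigma$ converts the two pieces into $\int|X|^{2\alpha}u(\Delta u+nu)\,dV_n$ and $(\alpha^2+2\alpha)\int|X|^{2\alpha-1}u\langle\nabla u,\nabla|X|\rangle\,dV_n$ respectively. Summing Lemma \ref{Lem 2.4-1} over $i=1,\dots,n+1$ and rearranging then gives \eqref{2.29}. The only real obstacle is bookkeeping in the two contraction identities—in particular tracking the triple product $h_{jk}h^{jl}h^{kl}$ and verifying that the $|X|^{-1}$ factor in $\nabla|X|$ combines cleanly with $u\sigma_n\,d\sigma$ to produce the $|X|^{2\alpha-1}u\,dV_n$ weighting; once those are set, the argument is essentially algebraic.
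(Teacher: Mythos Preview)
Your proposal is correct and follows essentially the same route as the paper: define the test functions $f_l=|X|^{\alpha}\langle X,E_l\rangle-c_l$, apply Lemma~\ref{Lem 2.4-1} to each and sum over $l$, then compute both sides. The only cosmetic difference is that the paper diagonalizes $W$ at the base point and writes the right-hand side directly in terms of the eigenvalues $\lambda_i$ (using $\partial\sigma_n/\partial\lambda_i=\sigma_n/\lambda_i$ and $\sum_i(\partial\sigma_n/\partial\lambda_i)\lambda_i^2=\sigma_1\sigma_n$), whereas you keep the index form $\sigma_n^{jk}=\sigma_n h_{jk}$ and carry out the two contractions $\sigma_n^{jk}\langle\nabla_jX,\nabla_kX\rangle$ and $\sigma_n^{jk}\nabla_j|X|\nabla_k|X|$ separately; both computations land on the same two integrals.
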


\begin{proof}
Let $\{E_l\}_{l=1}^{n+1}$ be an orthonormal basis of $\mathbb{R}^{n+1}$. Suppose $\{e_i\}_{i=1}^n$ is a local orthonormal frame for $\mathbb{S}^n$ such that $(u_{ij}+u\delta_{ij})(z_0)=\lambda_i(z_0)\delta_{ij}$. Inspired by the construction of Ivaki and Milman \cite{IM23}, for $l=1,\ldots,n+1$, we define the functions $f_l:\mathbb{S}^n\to\mathbb{R}$
\begin{equation}\label{2.30}
    f_l(z)=|X(z)|^{\alpha}\langle X(z),E_l\rangle-\frac{\int_{\mathbb{S}^n}|X(z)|^{\alpha}\langle X(z),E_l\rangle dV_n}{\int_{\mathbb{S}^n}dV_n}.
\end{equation}
Since $\int_{\mathbb{S}^n}f_l dV_n=0$ for $1\le l\le n+1$, by applying Lemma \ref{Lem 2.4-1} to $f_l$ and summing over $l$, we have
\begin{align*}
     n\sum_l \int_{\mathbb{S}^n}f_l^2 dV_n
    =n\left[\int_{\mathbb{S}^n}|X|^{2\alpha+2}dV_n-\frac{|\int_{\mathbb{S}^n}|X|^{\alpha}X dV_n|^2}{\int_{\mathbb{S}^n}dV_n}\right]
    \le \sum_l \int_{\mathbb{S}^n}u^2 \sigma_n^{ij}\nabla_i f_l\nabla_j f_l d\sigma.
\end{align*}
It follows from  that $\nabla_i X=\sum_j (u_{ij}+u\delta_{ij})e_j=\lambda_i e_i$ at $z_0$. Then $\langle e_i,X\rangle=u_i$ and $\lambda_i\langle e_i,X\rangle^2=|X|\langle \nabla u,\nabla |X|\rangle$ at $z_0$. Using $\frac{\partial \sigma_n}{\partial \lambda_i}=\frac{\sigma_n}{\lambda_i}$ and $\sum_i \frac{\partial \sigma_n}{\partial \lambda_i}\lambda_i^2=\sigma_1\sigma_n$, we have
\begin{align*}
    \sum_{l,i,j}\sigma_n^{ij}\nabla_i f_l\nabla_j f_l
    &=\sum_{l,i}\frac{\partial \sigma_n}{\partial \lambda_i}(\nabla_i(|X|^{\alpha})\langle X,E_l\rangle+|X|^{\alpha}\langle \nabla_i X,E_l\rangle)^2\\
    &=\sum_{l,i}\frac{\partial \sigma_n}{\partial \lambda_i}(\alpha|X|^{\alpha-2}\langle \lambda_i e_i,X\rangle\langle X,E_l\rangle+|X|^{\alpha}\langle \lambda_i e_i,E_l\rangle)^2\\
    &=\sum_i \frac{\partial \sigma_n}{\partial \lambda_i}\lambda_i^2 (|X|^{2\alpha}+(\alpha^2+2\alpha)|X|^{2\alpha-2}\langle e_i,X\rangle^2)\\
    &= |X|^{2\alpha} \sigma_1\sigma_n+(\alpha^2+2\alpha)|X|^{2\alpha-1}\langle \nabla u,\nabla |X|\rangle\sigma_n.
\end{align*}
Therefore, we obtain
\begin{align*}
    & n\left[\int_{\mathbb{S}^n}|X|^{2\alpha+2}dV_n-\frac{|\int_{\mathbb{S}^n}|X|^{\alpha}X dV_n|^2}{\int_{\mathbb{S}^n}dV_n}\right]\\
    \le &\int_{\mathbb{S}^n}u^2 \left(|X|^{2\alpha} \sigma_1+(\alpha^2+2\alpha)|X|^{2\alpha-1}\langle \nabla u,\nabla |X|\rangle\right)\sigma_n d\sigma\\
    = &\int_{\mathbb{S}^n}u \left(|X|^{2\alpha} (\Delta u+n u)+(\alpha^2+2\alpha)|X|^{2\alpha-1}\langle \nabla u,\nabla |X|\rangle\right) dV_n.
\end{align*}
This completes the proof of Lemma \ref{Lem 2.4-2}.
\end{proof}

\begin{rem}
In the case $\alpha=0$, Lemma \ref{Lem 2.4-2} reduces to \cite[Lemma 3.2]{IM23} for $k=n$.
\end{rem}

\section{Proofs of Theorem \ref{main-thm-1}, Theorem \ref{mainthm-2} and Theorem \ref{mainthm-5}}\label{sec:3}

\begin{proof}[Proof of Theorem \ref{main-thm-1}]
Suppose there exist two admissible solutions $M$ and $\Bar{M}$ to (\ref{OCM-eq}), i.e.
\begin{align}\label{3.1}
    \varphi(u) P_k(W)=\varphi(\Bar{u}) P_k(\Bar{W})=f\quad \text{on}\ \mathbb{S}^n,
\end{align}
where $W=(\nabla^2 u +u g_0)$ and $\Bar{W}=(\nabla^2 \Bar{u}+\Bar{u} g_0).$ 

Denote $P_{k,l}=P_{k,l}(W,\Bar{W})$. Since $W,\Bar{W}\in\Gamma_k$, it follows from (\ref{3.1}) and Lemma \ref{Lem 2.1-4} that
\begin{align*}
    P_{k-1,1}\ge P_{k0}^{\frac{k-1}{k}}P_{0k}^{\frac{1}{k}}
    =\left(\frac{\varphi(\Bar{u})}{\varphi(u)}\right)^{\frac{k-1}{k}}P_{0k},\quad
    P_{1,k-1}\ge P_{k0}^{\frac{1}{k}}P_{0k}^{\frac{k-1}{k}}
    =\left(\frac{\varphi(u)}{\varphi(\Bar{u})}\right)^{\frac{k-1}{k}}P_{k0}.
\end{align*}
For $2\le k\le n$, by using integral formulas (\ref{2.22}) and the assumption (\ref{1.11}), we get
\begin{equation}\label{3.2}
    \begin{aligned}
    0&=\int_{\mathbb{S}^n} u(P_{0k}-P_{k-1,1})d\sigma+\int_{\mathbb{S}^n} \Bar{u}(P_{k0}-P_{1,k-1})d\sigma\\
    &\le \int_{\mathbb{S}^n} \left[u\left(1-\left(\frac{\varphi(\Bar{u})}{\varphi(u)}\right)^{\frac{k-1}{k}}\right)P_{0k}+ \Bar{u}\left(1-\left(\frac{\varphi(u)}{\varphi(\Bar{u})}\right)^{\frac{k-1}{k}}\right)P_{k0}\right]d\sigma\\
    &= \int_{\mathbb{S}^n}\left(1-\left(\frac{\varphi(\Bar{u})}{\varphi(u)}\right)^{\frac{k-1}{k}}\right)\left(1-\frac{\Bar{u}}{u}\left(\frac{\varphi(\Bar{u})}{\varphi(u)}\right)^{\frac{1}{k}}\right) u P_{0k}d\sigma\\
    &\le 0.
    \end{aligned}
\end{equation}
Hence the inequalities in (\ref{3.2}) are actually equalities. It follows that $W,\Bar{W}$ are pairwise proportional and $u=\Bar{u}$. Therefore, we obtain $M=\Bar{M}$.
\end{proof}

When $\varphi$ is $C^1$-smooth, we can reduce the condition (\ref{1.11}) to a characterization of $(\log\varphi)'$, and then obtain the following corollary.

\begin{cor}\label{cor-1}
Let $2\le k\le n$ be an integer. Let $\varphi:(0,\infty)\to(0,\infty)$ be a $C^1$ function that satisfies
\begin{equation}\label{cond-1}
\begin{split}
    & -\frac{k}{s}<(\log\varphi)'(s)<0,
\end{split}
\end{equation}
for any $s>0$. For any positive function $f\in C(\mathbb{S}^n)$, then the smooth admissible solution to (\ref{OCM-eq}) is unique. 
\end{cor}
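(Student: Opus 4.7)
The plan is to reduce Corollary \ref{cor-1} to an immediate application of Theorem \ref{main-thm-1} by verifying that the pointwise derivative bound (\ref{cond-1}) implies the product inequality (\ref{1.11}). First I would introduce the auxiliary functions
$$
g(s):=s\,\varphi(s)^{1/k}, \qquad h(s):=\varphi(s)^{(k-1)/k},
$$
so that condition (\ref{1.11}) reads $(g(s)-g(t))(h(s)-h(t))<0$ for all $t>s>0$. Equivalently, this asks that one of $g,h$ be strictly increasing and the other strictly decreasing on $(0,\infty)$.

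Next I would differentiate directly. Since $\varphi>0$ and $k\ge 2$,
$$
h'(s)=\tfrac{k-1}{k}\,\varphi(s)^{-1/k}\varphi'(s), \qquad g'(s)=\varphi(s)^{1/k}\Bigl(1+\tfrac{s}{k}(\log\varphi)'(s)\Bigr).
$$
The right-hand inequality $(\log\varphi)'(s)<0$ in (\ref{cond-1}) forces $\varphi'(s)<0$, hence $h'(s)<0$, so $h$ is strictly decreasing. The left-hand inequality $(\log\varphi)'(s)>-k/s$ is precisely the statement that the bracketed factor in the formula for $g'(s)$ is positive, so $g'(s)>0$ and $g$ is strictly increasing.

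Combining these two monotonicities, for any $t>s>0$ one obtains $g(s)-g(t)<0$ and $h(s)-h(t)>0$, so (\ref{1.11}) holds with strict inequality; Theorem \ref{main-thm-1} then yields uniqueness of the smooth admissible solution. There is no substantive obstacle here: the two inequalities in (\ref{cond-1}) are exactly the infinitesimal forms of the two monotonicities demanded by (\ref{1.11}), and the hypothesis $k\ge 2$ enters only to ensure $(k-1)/k>0$, so that the sign of $h'$ genuinely tracks the sign of $\varphi'$.
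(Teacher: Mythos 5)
Your argument is correct and is exactly the reduction the paper intends: the paper states Corollary \ref{cor-1} as an immediate consequence of Theorem \ref{main-thm-1} by observing that, for $C^1$ functions, condition (\ref{1.11}) follows from the derivative bounds (\ref{cond-1}), which is precisely what your computation of $g'$ and $h'$ establishes. The only (harmless) imprecision is the claim that (\ref{1.11}) is \emph{equivalent} to opposite strict monotonicities of $g$ and $h$; you only need, and correctly prove, the implication from monotonicity to (\ref{1.11}).
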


\begin{rem}
Applying the maximum principle, it is clear that the conclusion of Corollary \ref{cor-1} still holds when the condition (\ref{cond-1}) is replaced by the condition $(\log\varphi)'(s)<-\frac{k}{s}$.
\end{rem}

$\ $

\begin{proof}[Proof of Theorem \ref{mainthm-5}]
Suppose there exist two smooth strictly convex solutions $M$ and $\Bar{M}$ to (\ref{an-p,q-M-eq}). Denote $a_{ij}=\sum_k h^{ik}\Bar{h}_{kj}$ and $P_{k,n-k}=P_{k,n-k}(W,\Bar{W})$, then we have
\begin{align}\label{3.3}
    \det(a_{ij})=\frac{P_{n,0}}{P_{0,n}}=\frac{\Bar{u}^{1-p}\Bar{r}^{q-n-1}}{u^{1-p} r^{q-n-1}}.
\end{align}

Using the integral formulas (\ref{2.25}) and (\ref{2.26}) with $k=0$, we obtain
\begin{align*}
     & \int_{\mathbb{S}^n} \left[\eta(\Bar{r})(\Bar{u} P_{1,n-1}-u P_{0,n})+\eta(r)(u P_{n-1,1}-\Bar{u} P_{n,0})\right]d\sigma\\
    =& \frac{1}{n}\int_{\mathbb{S}^n}\left[\frac{\eta'(\Bar{r})}{\Bar{r}}\sum_i (u\Bar{\tau}_{i}^2-\Bar{u}\Bar{\tau}_{i}\tau_{i})P_{0,n}
    +\frac{\eta'(r)}{r}\sum_i (\Bar{u}\tau_{i}^2-u\tau_{i}\Bar{\tau}_{i}) P_{n,0}\right] d\sigma\\
    \le & \frac{1}{n}\int_{\mathbb{S}^n}\left[\frac{\eta'(\Bar{r})}{\Bar{r}}\left(u (\Bar{r}^2-\Bar{u}^2)-\Bar{u}\sqrt{\Bar{r}^2-\Bar{u}^2}\sqrt{r^2-u^2}\right) P_{0,n}\right.\\
    & \quad \left. +\frac{\eta'(r)}{r}\left(\Bar{u} (r^2-u^2)-u\sqrt{r^2-u^2}\sqrt{\Bar{r}^2-\Bar{u}^2}\right) P_{n,0}\right] d\sigma\\
    =&\frac{1}{n}\int_{\mathbb{S}^n}\left(\sqrt{\left(\frac{\Bar{r}}{\Bar{u}}\right)^2-1}-\sqrt{\left(\frac{r}{u}\right)^2-1}\right)
    \left(\frac{\eta'(\Bar{r})}{\Bar{r}}\sqrt{\Bar{r}^2-\Bar{u}^2} P_{0,n}-\frac{\eta'(r)}{r}\sqrt{r^2-u^2} P_{n,0}\right)u\Bar{u} d\sigma.
\end{align*}
where we used the Cauchy-Schwarz inequality. It follows from Lemma \ref{Lem 2.1-4} that
\begin{align*}
    & \int_{\mathbb{S}^n} \left[\eta(\Bar{r})(\Bar{u} P_{1,n-1}-u P_{0,n})+\eta(r)(u P_{n-1,1}-\Bar{u} P_{n,0})\right]d\sigma\\
    \ge & \int_{\mathbb{S}^n} \left[\eta(\Bar{r})(\Bar{u} P_{0,n}^{\frac{n-1}{n}}P_{n,0}^{\frac{1}{n}}-u P_{0,n})+\eta(r)(u P_{n,0}^{\frac{n-1}{n}}P_{0,n}^{\frac{1}{n}}-\Bar{u} P_{n,0})\right]d\sigma\\
    =& \int_{\mathbb{S}^n} 
    \left(\frac{\Bar{u}}{u}\left(\frac{P_{n,0}}{P_{0,n}}\right)^{\frac{1}{n}}-1\right)
    \left(1-\frac{\eta(r)}{\eta(\Bar{r})}\left(\frac{P_{n,0}}{P_{0,n}}\right)^{\frac{n-1}{n}}\right)
    u\eta(\Bar{r}) P_{0,n} d\sigma
\end{align*}
Let $\eta(t)=t^{\alpha}$, where $\alpha$ is a non-positive constant to be determined. By (\ref{3.3}), we obtain
\begin{equation}\label{3.4}
\begin{split}
    & \int_{\mathbb{S}^n} 
    \left( \left(\frac{\Bar{u}}{u}\right)^{\frac{n+1-p}{n}}\left(\frac{\Bar{r}}{r}\right)^{\frac{q-n-1}{n}}-1\right)
    \left(1-\left(\frac{\Bar{u}}{u}\right)^{\frac{(n-1)(1-p)}{n}}\left(\frac{\Bar{r}}{r}\right)^{\frac{(n-1)(q-n-1)-n\alpha}{n}}\right)
    u \Bar{r}^{\alpha} P_{0,n} d\sigma\\
    \le & \frac{\alpha}{n}\int_{\mathbb{S}^n}
    \left(\Bar{u}^{p}\Bar{r}^{\alpha+n-1-q}\sqrt{\left(\frac{\Bar{r}}{\Bar{u}}\right)^2-1}-u^{p}r^{\alpha+n-1-q}\sqrt{\left(\frac{r}{u}\right)^2-1}\right)\\
    &\quad\quad \left(\sqrt{\left(\frac{\Bar{r}}{\Bar{u}}\right)^2-1}-\sqrt{\left(\frac{r}{u}\right)^2-1}\right) u\Bar{u}^{2-p}\Bar{r}^{q-n-1} P_{0,n} d\sigma.
\end{split}
\end{equation}

When $1\le p<n+1,\ q\le n+1$, we choose $\alpha=-\frac{(n-1)(n+1-q)}{n+1-p}\le0$. Let $\rho=\frac{\Bar{u}}{u}$ and $\Bar{\rho}=\frac{\Bar{r}}{r}$, we have for any $\rho,\Bar{\rho}>0$
\begin{align*}
    \left( \rho^{\frac{n+1-p}{n}}\Bar{\rho}^{\frac{q-n-1}{n}}-1\right)
    \left(1-\rho^{\frac{(n-1)(1-p)}{n}}\Bar{\rho}^{\frac{(n-1)(q-n-1)-n\alpha}{n}}\right)\ge 0.
\end{align*}


When $p=2$, then $\alpha=q-n-1$. Let $\xi=\frac{r}{u}$ and $\Bar{\xi}=\frac{\Bar{r}}{\Bar{u}}$, we have for any $1\le \xi,\Bar{\xi}\le \sqrt{2}$
\begin{align*}
    \left(\Bar{\xi}^{-2}\sqrt{\Bar{\xi}^2-1}-\xi^{-2}\sqrt{\xi^2-1}\right)\left(\sqrt{\Bar{\xi}^2-1}-\sqrt{\xi^2-1}\right)\ge 0,
\end{align*}
where we used the monotonic increasing property of $g(t)=t^{-2}\sqrt{t^2-1}$ for $1\le t\le\sqrt{2}$.

Consequently, the left hand side of (\ref{3.4}) is non-negative and the right hand side of (\ref{3.4}) is non-positive. Then both sides must be zero. It follows that $W=\lambda\Bar{W}$ and $\tau_i=\lambda^* \Bar{\tau}_i$, $1\le i\le n$, for some $\lambda,\lambda^*>0$. If $p\neq q$, we have $M=\Bar{M}$. If $p=q$, we have $M=\lambda \Bar{M}$.
\end{proof}

\begin{proof}[Proof of Theorem \ref{mainthm-2}]
Without loss of generality, we can assume $\psi(1,1)=1$, then $\psi(s,t)$ and $\eta(t)$ satisfy 
\begin{align}\label{3.5}
    \left(s-\psi(s,t)\right)\left(\eta(1)-\eta(t)\psi^{k-l-1}(s,t)\right)\le 0,
\end{align}
for any $t> s>0$. Suppose there exists a smooth strictly convex solution $M$ to (\ref{icp-eq}). Let $\Bar{M}$ be the unit sphere, then $\Bar{M}$ is a strictly convex solution to (\ref{icp-eq}).

Using the integral formulas (\ref{2.25}) and (\ref{2.26}), we have
\begin{equation*}
    \begin{aligned}
    & \int_{\mathbb{S}^n} \left[\eta(1)(P_{l+1}(W)-u P_{l}(W))+\eta(r)(u P_{k}(W)-P_{k-1}(W))\right]d\sigma\\
    =& \frac{(k-1)!}{n!}\int_{\mathbb{S}^n}\frac{\eta'(r)}{r}\sum \delta_{i_1,i_2\ldots,i_{n-k+1}}^{i_1,j_2,\ldots,j_{n-k+1}}\tau_{i_1}^2 h_{i_2 j_2}\cdots h_{i_{n-k+1} j_{n-k+1}} P_n(W)d\sigma.
    \end{aligned}
\end{equation*}
Note that for a fixed $z_0\in \mathbb{S}^n$, after choosing a local orthogonal frame such that $h_{ij}=\kappa_i\delta_{ij}$ at $z_0$, then we have at $z_0$
\begin{align*}
    \sum \delta_{i_1,i_2\ldots,i_{n-k+1}}^{i_1,j_2,\ldots,j_{n-k+1}}\tau_{i_1}^2 h_{i_2 j_2}\cdots h_{i_{n-k+1} j_{n-k+1}} 
    &=\sum \tau_{i_1}^2\kappa_{i_2}\cdots\kappa_{i_{n-k+1}}\ge 0.
\end{align*}
Then it follows from $\eta'(t)\le 0$ and Lemma \ref{Lem 2.1-2} that
\begin{align*}
    0 
    &\ge \int_{\mathbb{S}^n} \left[\eta(1)P_l(W)\left(\frac{P_{l+1}(W)}{P_l(W)} -u\right)+\eta(r)P_k(W) \left(u\frac{P_{k-1}(W)}{P_k(W)}-1\right)\right]d\sigma\\
    &\ge \int_{\mathbb{S}^n} \left[\eta(1)P_l(W)\left( \left(\frac{P_{k}(W)}{P_l(W)}\right)^{\frac{1}{k-l}} -u\right)+\eta(r)P_k(W) \left(u \left(\frac{P_{k}(W)}{P_l(W)}\right)^{-\frac{1}{k-l}}-1\right)\right]d\sigma\\
    &=\int_{\mathbb{S}^n} \left(u- \left(\frac{P_{k}(W)}{P_l(W)}\right)^{\frac{1}{k-l}}\right) \left(\eta(r) \left(\frac{P_{k}(W)}{P_l(W)}\right)^{\frac{k-l-1}{k-l}}-\eta(1)\right) P_l(W) d\sigma.
\end{align*}
Using (\ref{icp-eq}) and the assumption (\ref{3.5}), we obtain
\begin{equation}\label{3.6}
\begin{split}
    0 &\ge\int_{\mathbb{S}^n} \left(u -\psi(u,r)\right) \left(\eta(r) \psi^{k-l-1}(u,r)-\eta(1)\right) P_l(W) d\sigma\ge 0.
\end{split}
\end{equation}
Hence the inequalities in (\ref{3.6}) are both equalities. It follows that $W=\lambda I$, and then $M$ is a sphere. Moreover, if in addition $\eta'(t)<0$, then we have $\tau_i\equiv 0,\ 1\le i\le n$. Thus $r=u$ and $M$ is an origin-centred sphere.
\end{proof}

\section{Proof of Theorem \ref{mainthm-3}}\label{sec:4}

\begin{proof}[Proof of Theorem \ref{mainthm-3}]
Suppose there exists a smooth strictly convex solution $M$ to (\ref{icp-eq}). Let $\{e_1,\ldots,e_n,e_{n+1}\}$ be a local orthonormal frame at $x\in M$ such that $e_{n+1}$ is the unit inner normal vector of $M$ and $h_{ij}=\kappa_i\delta_{ij}$ at $x$, then we have $u_i=\kappa_i\langle x,e_i\rangle$ and $r_i=r^{-1}\langle x,e_i\rangle$ at $x$.

Denote $\Tilde{k}=n-k,\ \Tilde{l}=n-l$, $\Tilde{\psi}=\psi^{-1}$ and $P_i=P_i(\kappa)$ for $1\le i\le n$, we have  $0\le \Tilde{k}<\Tilde{l}\le n$, $\partial_1\Tilde{\psi}\le 0$ and $\partial_2\Tilde{\psi}\le 0$. Note that
$P_i(W)=P_i(\kappa^{-1})=\frac{P_{n-i}(\kappa)}{P_n(\kappa)}$, 
then (\ref{cond-3}) is equivalent to
\begin{equation}\label{4.1}
    \left(\frac{P_{\Tilde{l}}}{P_{\Tilde{k}}}\right)^{\frac{1}{\Tilde{l}-\Tilde{k}}}=\Tilde{\psi}(u,r).
\end{equation}

When $\Tilde{k}<\Tilde{l}-1$, by using Minkowski formulas (\ref{2.16}), Lemma \ref{Lem 2.1-2}, Proposition \ref{prop-1} and (\ref{4.1}), we obtain
\begin{equation}\label{4.2}
\begin{split}
    0 &\le \int_M (P_{\Tilde{l}-1}-P_{\Tilde{k}}^{\frac{1}{\Tilde{l}-\Tilde{k}}}P_{\Tilde{l}}^{\frac{\Tilde{l}-\Tilde{k}-1}{\Tilde{l}-\Tilde{k}}})d\mu
    +\int_M u(P_{\Tilde{k}+1}-P_{\Tilde{k}}^{\frac{\Tilde{l}-\Tilde{k}-1}{\Tilde{l}-\Tilde{k}}}P_{\Tilde{l}}^{\frac{1}{\Tilde{l}-\Tilde{k}}})\left(\frac{P_{\Tilde{l}}}{P_{\Tilde{k}}}\right)^{\frac{\Tilde{l}-\Tilde{k}-1}{\Tilde{l}-\Tilde{k}}}d\mu\\
    &=\int_M (P_{\Tilde{l}-1}-u P_{\Tilde{l}})d\mu
    +\int_M (u P_{\Tilde{k}+1}-P_{\Tilde{k}})\left(\frac{P_{\Tilde{l}}}{P_{\Tilde{k}}}\right)^{\frac{\Tilde{l}-\Tilde{k}-1}{\Tilde{l}-\Tilde{k}}} d\mu \\
    &\le \int_M (u P_{\Tilde{k}+1}-P_{\Tilde{k}})\left(\frac{P_{\Tilde{l}}}{P_{\Tilde{k}}}\right)^{\frac{\Tilde{l}-\Tilde{k}-1}{\Tilde{l}-\Tilde{k}}} d\mu
    =\frac{1}{(n-\Tilde{k})\binom{n}{\Tilde{k}}}\int_M T_{ij}^{\Tilde{k}}\langle x,e_i\rangle \left(\left(\frac{P_{\Tilde{l}}}{P_{\Tilde{k}}}\right)^{\frac{\Tilde{l}-\Tilde{k}-1}{\Tilde{l}-\Tilde{k}}}\right)_j d\mu\\
    &= \frac{\Tilde{l}-\Tilde{k}-1}{(n-\Tilde{k})\binom{n}{\Tilde{k}}}\int_M T_{ij}^{\Tilde{k}}\langle x,e_i\rangle (\Tilde{\psi}(u,r))_j\left(\frac{P_{\Tilde{l}}}{P_{\Tilde{k}}}\right)^{\frac{\Tilde{l}-\Tilde{k}-2}{\Tilde{l}-\Tilde{k}}} d\mu\\
    &= \frac{\Tilde{l}-\Tilde{k}-1}{(n-\Tilde{k})\binom{n}{\Tilde{k}}}\int_M T_{ij}^{\Tilde{k}}\langle x,e_i\rangle (\kappa_j\langle x,e_j\rangle \partial_1\Tilde{\psi} + r^{-1}\langle x,e_j\rangle \partial_2\Tilde{\psi}) \left(\frac{P_{\Tilde{l}}}{P_{\Tilde{k}}}\right)^{\frac{\Tilde{l}-\Tilde{k}-2}{\Tilde{l}-\Tilde{k}}} d\mu\\
    &\le 0.
\end{split}
\end{equation}

When $\Tilde{k}=\Tilde{l}-1\ge 1$, we obtain
\begin{equation}\label{4.3}
\begin{split}
    0 &\le \int_M (P_{\Tilde{k}}^2-P_{\Tilde{k}-1} P_{\Tilde{k}+1})\frac{1}{P_{\Tilde{k}}} d\mu 
    =\int_M (P_{\Tilde{k}}-u P_{\Tilde{k+1}})d\mu
    +\int_M (u P_{\Tilde{k}}-P_{\Tilde{k}-1})\frac{P_{\Tilde{k}+1}}{P_{\Tilde{k}}} d\mu \\
    &= \int_M (u P_{\Tilde{k}}-P_{\Tilde{k}-1})\frac{P_{\Tilde{k}+1}}{P_{\Tilde{k}}} d\mu
    =\frac{1}{(n-\Tilde{k}+1)\binom{n}{\Tilde{k}-1}}\int_M T_{ij}^{\Tilde{k}-1}\langle x,e_i\rangle \left(\frac{P_{\Tilde{k}+1}}{P_{\Tilde{k}}}\right)_j d\mu\\
    &= \frac{1}{(n-\Tilde{k}+1)\binom{n}{\Tilde{k}-1}}\int_M T_{ij}^{\Tilde{k}-1}\langle x,e_i\rangle (\kappa_j\langle x,e_j\rangle \partial_1\Tilde{\psi} + r^{-1}\langle x,e_j\rangle \partial_2\Tilde{\psi}) d\mu\\
    &\le 0. 
\end{split}
\end{equation}

When $\Tilde{k}=\Tilde{l}-1=0$, since $n\ge 2$, we obtain
\begin{equation}\label{4.4}
\begin{split}
    0 &\le \int_M u(P_{1}^2- P_{2}) d\mu 
    =\int_M (P_{1}-u P_{2})d\mu
    +\int_M (u P_{1}-1)P_{1} d\mu \\
    &= \int_M (u P_{1}-1)P_{1} d\mu
    =\frac{1}{n}\int_M \delta_{ij}\langle x,e_i\rangle \left(P_1\right)_j d\mu\\
    &= \frac{1}{n}\int_M \sum_i \langle x,e_i\rangle (\kappa_i\langle x,e_i\rangle \partial_1\Tilde{\psi} + r^{-1}\langle x,e_i\rangle \partial_2\Tilde{\psi}) d\mu\\
    &\le 0. 
\end{split}
\end{equation}

Therefore, the above inequalities in (\ref{4.2}), (\ref{4.3}) and (\ref{4.4}) are all equalities. It follows that $W=\lambda I$, $\langle x,e_i\rangle\equiv 0\ (1\le i\le n)$, and then $M$ is an origin-centred sphere.
\end{proof}

$\ $

Furthermore, we study the following equation
\begin{equation}\label{comb-eq}
    \sum\limits_{s=1}^{n-k}f_s(u,r)\frac{P_{k+s}(\kappa)}{P_{k}(\kappa)}+\sum\limits_{s=1}^{n-k}g_s(u,r)\frac{P_{k-1+s}(\kappa)}{P_{k-1}(\kappa)}=\psi(u.r),
\end{equation}
where $\kappa$ are the principal curvatures of the hypersurface. We can prove the following uniqueness result, which generalizes the result in \cite{Si67}.

\begin{prop}\label{mainthm-4}
Let $n\ge 2$ and $0< k< n$. Suppose $\psi:(0,\infty)\times(0,\infty)\to (0,\infty)$ is a $C^1$ function with $\partial_1\psi\le 0,\ \partial_2\psi\le 0$ and at least one of these inequalities is strict. For $1\le s\le n-k$, suppose $f_s,\ g_s:(0,\infty)\times(0,\infty)\to (0,\infty)$ are $C^1$ functions with $f_s+g_s=\phi_s$ and $\partial_1\phi_s\ge 0,\ \partial_2\phi_s\ge 0$. Then the smooth strictly convex solution $M$ to (\ref{comb-eq}) must be an origin-centred sphere.    
\end{prop}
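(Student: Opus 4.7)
The plan is to mirror the integral-formula approach used in the proof of Theorem~\ref{mainthm-3}. Since equation~\eqref{comb-eq} is already expressed in principal curvatures $\kappa$, no change of variables is needed. First I would apply the Newton-MacLaurin inequality (Lemma~\ref{Lem 2.1-1}) $P_{k+s}P_{k-1}\le P_k P_{k+s-1}$ for each $s\in\{1,\ldots,n-k\}$, that is, $P_{k+s}/P_k\le P_{k+s-1}/P_{k-1}$ pointwise with equality iff $\kappa_1=\cdots=\kappa_n$. Weighting this by $g_s$ and by $f_s$ respectively and combining with the equation yields the pointwise bounds
\begin{equation*}
F:=\sum_{s}\phi_s P_{k+s}\le \psi P_k,\qquad G:=\sum_s \phi_s P_{k+s-1}\ge \psi P_{k-1},
\end{equation*}
where equality in either forces $\kappa$ constant and hence $M$ to be a sphere.

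Second, using Proposition~\ref{prop-1} together with the divergence-free property of the Newton tensor $T^j$ (integration by parts), one derives that for any $C^1$ function $h(u,r)$ and $0\le j\le n-1$,
\begin{equation*}
\int_M h(uP_{j+1}-P_j)\,d\mu=\frac{1}{(n-j)\binom{n}{j}}\int_M \sum_i T^j_{ii}\,\langle x,e_i\rangle^2\bigl(\partial_1 h\cdot\kappa_i+\partial_2 h\cdot r^{-1}\bigr)\,d\mu
\end{equation*}
in a local frame diagonalising $h_{ij}=\kappa_i\delta_{ij}$. Taking $h=\phi_s$, $j=k+s-1$ and summing over $s$ yields $\int_M(uF-G)\,d\mu\ge 0$ because $\partial_1\phi_s,\partial_2\phi_s\ge 0$ and $T^{k+s-1}$ is positive semidefinite on a convex hypersurface. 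Taking $h=\psi$, $j=k-1$ yields $\int_M \psi(uP_k-P_{k-1})\,d\mu\le 0$ because $\partial_1\psi,\partial_2\psi\le 0$.

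Third, these four ingredients interlock into the cyclic chain
\begin{equation*}
\int_M u\psi P_k\,d\mu\;\stackrel{\text{IBP}}{\le}\;\int_M\psi P_{k-1}\,d\mu\;\stackrel{\text{NM}}{\le}\;\int_M G\,d\mu\;\stackrel{\text{IBP}}{\le}\;\int_M uF\,d\mu\;\stackrel{\text{NM}\,\cdot\, u}{\le}\;\int_M u\psi P_k\,d\mu,
\end{equation*}
so every inequality is an equality. Pointwise equality in the NM bounds $F=\psi P_k$ and $G=\psi P_{k-1}$ forces $\kappa_1=\cdots=\kappa_n$ at every point, so $M$ is umbilical and therefore a sphere. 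Pointwise equality in the IBP identity for $\psi$ forces $\sum_i T^{k-1}_{ii}\langle x,e_i\rangle^2(\partial_1\psi\cdot\kappa_i+\partial_2\psi\cdot r^{-1})\equiv 0$; since $T^{k-1}$ is positive definite on a strictly convex hypersurface and, by continuity, at least one of $\partial_1\psi,\partial_2\psi$ is strictly negative on an open subset of the $(u,r)$-domain (with $\kappa_i,r^{-1}>0$), the bracket is strictly negative on an open set in $M$, so $\langle x,e_i\rangle=0$ on that open set. On a sphere this can happen on an open set only for origin-centred spheres, so $M$ is origin-centred.

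The main conceptual obstacle is recognising that the four natural integrals close into a cycle rather than forming an uncloseable one-sided chain. Following the template of Theorem~\ref{mainthm-3}, one's first instinct is to seek a single algebraic identity in which an NM combination equals a Minkowski-vanishing term plus an IBP-controlled term of opposite sign; but with the two coupled ratios $P_{k+s}/P_k$ and $P_{k+s-1}/P_{k-1}$ summed over $s$, no such single identity is apparent. The essential observation is that Newton-MacLaurin here acts in the direction $P_{k+s}/P_k\le P_{k+s-1}/P_{k-1}$ (opposite to a careless first guess), and it is precisely this sign that makes the two NM inequalities pair compatibly with the two IBP inequalities to close the cycle.
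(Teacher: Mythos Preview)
Your proof is correct and uses the same two ingredients as the paper---Newton--MacLaurin and integration by parts through the Newton tensors $T^j$---but packages them differently. The paper discovers a single pointwise identity
\[
(P_k P_{k-1+s}-P_{k-1}P_{k+s})\Bigl(\frac{f_s}{P_k}+u\,\frac{g_s}{P_{k-1}}\Bigr)=(P_{k-1+s}-uP_{k+s})\,\phi_s-(P_{k-1}-uP_k)\Bigl(\frac{f_sP_{k+s}}{P_k}+\frac{g_sP_{k-1+s}}{P_{k-1}}\Bigr),
\]
sums over $s$ so that the last bracket collapses to $\psi$ via \eqref{comb-eq}, and then obtains the sandwich $0\le(\text{NM side})=(\text{Minkowski side})\le 0$ with one IBP on each of the two Minkowski-type terms. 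You instead extract the two pointwise NM consequences $F\le\psi P_k$ and $G\ge\psi P_{k-1}$ separately, pair them with the two IBP inequalities, and close a four-term cycle. The logical content is identical; your arrangement is more modular and avoids hunting for the algebraic identity, while the paper's version is more compact. One minor point: under the natural reading of the hypothesis, the strict partial of $\psi$ is strictly negative everywhere, so the bracket $\partial_1\psi\,\kappa_i+\partial_2\psi\,r^{-1}$ is strictly negative on all of $M$ and $\langle x,e_i\rangle\equiv 0$ follows directly---your detour through ``on an open set of a sphere'' is not needed.
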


\begin{proof}[Proof of Proposition \ref{mainthm-4}]
Suppose there exists a smooth strictly convex solution $M$ to (\ref{comb-eq}). Let $\{e_1,\ldots,e_n,e_{n+1}\}$ be a local orthonormal frame at $x\in M$ such that $e_{n+1}$ is the unit inner normal vector of $M$ and $h_{ij}=\kappa_i\delta_{ij}$ at $x$. Denote $P_i=P_i(\kappa)$.

Using Minkowski formulas (\ref{2.16}), Lemma \ref{Lem 2.1-2}, Proposition \ref{prop-1} and (\ref{comb-eq}), we obtain
\begin{equation}\label{4.5}
\begin{split}
    0 &\le \sum\limits_{s=1}^{n-k}\int_M (P_k P_{k-1+s}-P_{k-1}P_{k+s})\left(\frac{f_s}{P_k}+u\frac{g_s}{P_{k-1}}\right) d\mu \\
    &= \sum\limits_{s=1}^{n-k}\int_M (P_{k-1+s}-u P_{k+s})(f_s+g_s) d\mu
    - \sum\limits_{s=1}^{n-k}\int_M (P_{k-1}-u P_{k})\left(\frac{f_s P_{k+s}}{P_k}+u\frac{g_s P_{k-1+s}}{P_{k-1}}\right) d\mu\\
    &= \sum\limits_{s=1}^{n-k}\int_M (P_{k-1+s}-u P_{k+s})\phi_s d\mu
    - \int_M (P_{k-1}-u P_{k})\psi d\mu\\
    &= -\sum\limits_{s=1}^{n-k}\frac{\int_M T_{ij}^{k-1+s}\langle x,e_i\rangle (\phi_s)_j d\mu}{(n-k+1-s)\binom{n}{k-1+s}}
    +\frac{\int_M T_{ij}^{k-1}\langle x,e_i\rangle (\psi)_j d\mu}{(n-k+1)\binom{n}{k-1}} \\
    &\le 0.
\end{split}
\end{equation}
Hence the above inequalities in (\ref{4.5}) are both equalities. It follows that $W=\lambda I$, $\langle x,e_i\rangle\equiv 0\ (1\le i\le n)$, and then $M$ is an origin-centred sphere.
\end{proof}

\section{Proofs of Theorem \ref{mainthm-6} and Theorem \ref{mainthm-7}}\label{sec:5}

\begin{proof}[Proof of Theorem \ref{mainthm-6}]
Suppose there exists a smooth strictly origin-symmetric convex solution $M$ to (\ref{p,q-M-eq}). Then we have $dV_n=u^p |X|^{n+1-q}d\sigma$.  

Since $M$ is origin-symmetric, then $\int_{\mathbb{S}^n}|X|^{\alpha}X dV_n=0$ for any $\alpha\in\mathbb{R}$. Using (\ref{2.29}), we have
\begin{equation}\label{5.1}
    n\int_{\mathbb{S}^n}|X|^{2\alpha+2}dV_n
    \le \int_{\mathbb{S}^n}|X|^{2\alpha}u(\Delta u+n u) dV_n
    +(\alpha^2+2\alpha)\int_{\mathbb{S}^n}|X|^{2\alpha-1} u \langle\nabla u,\nabla |X|\rangle dV_n.
\end{equation}
Note that $|X|^2=u^2+|\nabla u|^2$ and
\begin{align}\label{5.2}
    \langle \nabla u,\nabla|X|^2\rangle
    =\sum_j 2u_j(uu_j+\sum_i u_i u_{ij})
    =\sum_{i,j}2u_i u_j(u_{ij}+u\delta_{ij})
    \ge c|\nabla u|^2,
\end{align}
where $c>0$ depends on $M$. By integration by parts, we have
\begin{equation}\label{5.3}
\begin{split}
     & \int_{\mathbb{S}^n}|X|^{2\alpha}u\Delta u dV_n
    =\int_{\mathbb{S}^n}|X|^{2\alpha+n+1-q}u^{p+1}\Delta u d\sigma\\
    =& -(p+1)\int_{\mathbb{S}^n}|X|^{2\alpha+n+1-q}u^{p}|\nabla u|^2 d\sigma
    -(2\alpha+n+1-q)\int_{\mathbb{S}^n}|X|^{2\alpha+n-q}u^{p+1}\langle \nabla u,\nabla|X|\rangle d\sigma\\
    =& -(p+1)\int_{\mathbb{S}^n}|X|^{2\alpha}|\nabla u|^2 dV_n
    -(2\alpha+n+1-q)\int_{\mathbb{S}^n}|X|^{2\alpha-1}u\langle \nabla u,\nabla|X|\rangle d\sigma.
\end{split}
\end{equation}
Substituting (\ref{5.3}) into (\ref{5.1}), we have
\begin{equation}\label{5.1-*}
    (n+p+1)\int_{\mathbb{S}^n}|X|^{2\alpha}|\nabla u|^2 dV_n
    \le (\alpha^2+q-n-1)\int_{\mathbb{S}^n}|X|^{2\alpha-1} u \langle\nabla u,\nabla |X|\rangle dV_n.
\end{equation}

When $p\ge -n-1$ and $q\le n+1$ with at least one of these being strict, choosing $\alpha=0$, it follows from (\ref{5.2}) and (\ref{5.1-*}) that $\nabla u\equiv 0$ on $\mathbb{S}^n$. Hence $M$ is an origin-centred sphere.

When $p> -n-1$ and $q> n+1$, using the same notation as in the proof of Lemma \ref{Lem 2.4-2}, we have
\begin{equation}\label{5.4-*}
\begin{split}
    & \int_{\mathbb{S}^n}|X|^{2\alpha-1} u \langle\nabla u,\nabla |X|\rangle dV_n
    = \int_{\mathbb{S}^n}|X|^{2\alpha-2} u \sum_i \lambda_i u_i^2 dV_n \\
    \le& \int_{\mathbb{S}^n}|X|^{2\alpha-2} u |\nabla u|^2 (\Delta u+ nu) dV_n.
\end{split}
\end{equation}
Assume that $\alpha+\frac{n+1-q}{2}\ge 0$, by (\ref{5.2}), we calculate
\begin{equation}\label{5.4}
\begin{split}
    & \int_{\mathbb{S}^n}|X|^{2\alpha-2}u |\nabla u|^2 \Delta u dV_n
    =\int_{\mathbb{S}^n}|X|^{2\alpha+n-1-q}u^{p+1} |\nabla u|^2 \Delta u d\sigma\\
    =& -(p+1)\int_{\mathbb{S}^n}|X|^{2\alpha+n-1-q}u^{p}|\nabla u|^4 d\sigma
    -\int_{\mathbb{S}^n}|X|^{2\alpha+n-1-q} u^{p+1}\langle \nabla u,\nabla|\nabla u|^2\rangle  d\sigma\\
    &\quad -\left(\alpha+\frac{n-1-q}{2}\right)\int_{\mathbb{S}^n}|X|^{2\alpha+n-3-q}u^{p+1}|\nabla u|^2\langle \nabla u,\nabla|X|^2\rangle d\sigma\\
    =& -(p+1)\int_{\mathbb{S}^n}|X|^{2\alpha+n-1-q}u^{p}|\nabla u|^4 d\sigma
    +2\int_{\mathbb{S}^n}|X|^{2\alpha+n-1-q}u^{p+2}|\nabla u|^2 d\sigma\\
    &\quad -\int_{\mathbb{S}^n}\left(|X|^2+\left(\alpha+\frac{n-1-q}{2}\right)|\nabla u|^2\right)|X|^{2\alpha+n-3-q}u^{p+1}\langle \nabla u,\nabla|X|^2\rangle d\sigma\\
    \le& -(p+1)\int_{\mathbb{S}^n}|X|^{2\alpha+n-1-q}u^{p}|\nabla u|^4 d\sigma
    +2\int_{\mathbb{S}^n}|X|^{2\alpha+n-1-q}u^{p+2}|\nabla u|^2 d\sigma\\
    =& -(p+1)\int_{\mathbb{S}^n}|X|^{2\alpha-2}|\nabla u|^4 dV_n
    +2\int_{\mathbb{S}^n}|X|^{2\alpha-2}u^{2}|\nabla u|^2 dV_n.
\end{split}
\end{equation}
It follows from (\ref{5.1-*}), (\ref{5.4-*}) and (\ref{5.4}) that
\begin{equation}\label{5.5}
\begin{split}
     0 & \le  \left(-(n+p+1)-(p+1)(\alpha^2+q-n-1)\right)\int_{\mathbb{S}^n}|X|^{2\alpha-2}|\nabla u|^4 dV_n\\
     &\quad +\left(-(n+p+1)+(n+2)(\alpha^2+q-n-1)\right)\int_{\mathbb{S}^n}|X|^{2\alpha-2} u^2 |\nabla u|^2 dV_n.
\end{split}
\end{equation}
Notice that $-(p+1)< n+2$. We choose $\alpha=\sqrt{n+1-q+\frac{n+p+1}{n+2}}$ according to the provided conditions for $p$ and $q$. This choice ensures that $\alpha+\frac{n+1-q}{2}\ge 0$. Consequently, (\ref{5.5}) becomes
\begin{equation}\label{5.6}
	 0  \le  -\frac{(n+p+3)(n+p+1)}{n+2}\int_{\mathbb{S}^n}|X|^{2\alpha-2}|\nabla u|^4 dV_n\le 0.
\end{equation}
Hence $\nabla u\equiv 0$ on $\mathbb{S}^n$. The proof of Case (\ref{thm-6i}) is complete. 

Moreover, using the duality relation of $L_p$ dual Minkowski problems in \cite[Theorem 7.1]{CHZ19}, we have that the solutions to (\ref{p,q-M-eq}) with parameters $(p,q)$ are in one-to-one correspondence with the solutions to (\ref{p,q-M-eq}) with parameters $(-q,-p)$. Combining this with Case (\ref{thm-6i}), we deduce the proof of Case (\ref{thm-6ii}). We complete the proof of Theorem \ref{mainthm-6}.
\end{proof}

\begin{proof}[Proof of Theorem \ref{mainthm-7}]
Suppose there exists a smooth strictly convex solution $M$ to (\ref{p,q-M-eq}). Let $\alpha$ be a constant to be determined such that $\alpha+\frac{n+1-q}{2}\ge 0$. By (\ref{2.29}), we have
\begin{equation}\label{5.7} 
\begin{split}
    n\int_{\mathbb{S}^n}|X|^{2\alpha+2}dV_n
    & \le 
    n\frac{|\int_{\mathbb{S}^n}|X|^{\alpha}X dV_n|^2}{\int_{\mathbb{S}^n}dV_n}
    +\int_{\mathbb{S}^n}|X|^{2\alpha}u(\Delta u+n u) dV_n\\
    &\quad +(\alpha^2+2\alpha)\int_{\mathbb{S}^n}|X|^{2\alpha-1} u\langle \nabla u,\nabla|X|\rangle dV_n.
\end{split}
\end{equation}

Using the same procedure as Theorem \ref{mainthm-6}, we can obtain (\ref{5.3}), (\ref{5.4-*}) and (\ref{5.4}) again. Since $q-n-1\ge 0$, we choose $\alpha=q-n-1$. It follows from the divergence theorem that
\begin{equation*}
\begin{split}
    \int_{\mathbb{S}^n}|X|^{\alpha}X dV_n
    &=\int_{\mathbb{S}^n} u^p X d\sigma
    =\int_{\mathbb{S}^n} u^p \nabla u d\sigma+\int_{\mathbb{S}^n} u^{p+1} z d\sigma(z)\\
    &=\frac{n+p+1}{n}\int_{\mathbb{S}^n} u^p \nabla u d\sigma
    =\frac{n+p+1}{n}\int_{\mathbb{S}^n} |X|^{\alpha} \nabla u dV_n.
\end{split}
\end{equation*}
Combining this with Cauchy-Schwarz inequality, we have
\begin{equation}\label{5.8}
\begin{split}
    n\frac{|\int_{\mathbb{S}^n}|X|^{\alpha}X dV_n|^2}{\int_{\mathbb{S}^n}dV_n}
     \le & \frac{(n+p+1)^2}{n} \frac{(\int_{\mathbb{S}^n}|X|^{\alpha}|\nabla u| dV_n)^2}{\int_{\mathbb{S}^n}dV_n} \\
     \le & \frac{(n+p+1)^2}{n} \int_{\mathbb{S}^n}|X|^{2\alpha}|\nabla u|^2 dV_n.
\end{split}
\end{equation}
Substituting (\ref{5.3}), (\ref{5.4-*}) (\ref{5.4}) and (\ref{5.8}) into (\ref{5.7}), we obtain
\begin{equation}\label{5.9}
\begin{split}
     0 & \le  \left(\frac{(p+1)(n+p+1)}{n}-(p+1)(q-n-1)(q-n)\right)\int_{\mathbb{S}^n}|X|^{2\alpha-2}|\nabla u|^4 dV_n\\
     &\quad +\left(\frac{(p+1)(n+p+1)}{n}+(n+2)(q-n-1)(q-n)\right)\int_{\mathbb{S}^n}|X|^{2\alpha-2} u^2 |\nabla u|^2 dV_n.
\end{split}
\end{equation}
Due to $(p+1)(n+p+1)\le 0$ and $n+1\le q\le n+\frac{1}{2}+\sqrt{\frac{1}{4}-\frac{(p+1)(n+p+1)}{n(n+2)}}$, we obtain
\begin{equation}\label{5.10}
\begin{split}
    & \frac{(p+1)(n+p+1)}{n}-(p+1)(q-n-1)(q-n)\\
    \le\ & \frac{(p+1)(n+p+1)}{n}+(n+2)(q-n-1)(q-n)\\
    \le\ & 0.
\end{split}
\end{equation}
When $-n-1<p<-1$, then at least one of inequalities in (\ref{5.10}) is strict. It follows from (\ref{5.9}) that $\nabla u\equiv 0$ on $\mathbb{S}^n$. When $p=-1$, then $q=n+1$ and $\alpha=0$. In this case, (\ref{5.9}) becomes an equality. It follows from the equality case of (\ref{5.8}) that $\nabla u\equiv 0$ on $\mathbb{S}^n$. The proof of Case (\ref{thm-7i}) is complete. Using the duality relation in \cite[Theorem 7.1]{CHZ19}, we deduce the proof of Case (\ref{thm-7ii}) and complete the proof of Theorem \ref{mainthm-7}.
\end{proof}


\appendix
\section{Uniqueness of the $L_p$ dual Christoffel-Minkowski problem}\label{appendix}

In this appendix, by modifying the test functions in \cite[Lemma 3.1]{IM23} for $k<n$, we provide a uniqueness result of origin-symmetric solutions to the isotropic $L_p$ dual Christoffel-Minkowski problem. Denote $dV_k=u\sigma_k d\sigma$. We first present a generalization of Lemma \ref{Lem 2.4-2}, which reduces to \cite[Lemma 3.2]{IM23} when $\alpha=0$.

\begin{lem}\label{lem a}
Let $1\le k\le n$. Assume that $\alpha\ge 0$. Then we have
\begin{equation}\label{a.1} 
\begin{split}
    k\int_{\mathbb{S}^n}|X|^{2\alpha+2}dV_k
    & \le 
    k\frac{|\int_{\mathbb{S}^n}|X|^{\alpha}X dV_k|^2}{\int_{\mathbb{S}^n}dV_k}
    +\int_{\mathbb{S}^n}|X|^{2\alpha}u \left(\sigma_1-(k+1)\frac{\sigma_{k+1}}{\sigma_k}\right) dV_k\\
    &\quad +(\alpha^2+2\alpha)\int_{\mathbb{S}^n}|X|^{2\alpha-2}|\nabla u|^2 u \left(\sigma_1-(k+1)\frac{\sigma_{k+1}}{\sigma_k}\right) dV_n. 
\end{split}
\end{equation}
\end{lem}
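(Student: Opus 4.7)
My plan is to extend the proof of Lemma \ref{Lem 2.4-2} by replacing the classical spectral-gap inequality of Lemma \ref{Lem 2.4-1} by its $L_k$ analogue, namely \cite[Lemma 3.1]{IM23}: for every $f\in C^2(\mathbb{S}^n)$ with $\int_{\mathbb{S}^n}f\,dV_k=0$,
\begin{equation*}
    k\int_{\mathbb{S}^n}f^2\,dV_k\le\int_{\mathbb{S}^n}u^2\sigma_k^{ij}\nabla_i f\nabla_j f\,d\sigma,
\end{equation*}
where $\sigma_k^{ij}=\partial\sigma_k(W)/\partial W_{ij}$. The argument then structurally mirrors the proof of Lemma \ref{Lem 2.4-2}, the only new ingredient being the pointwise handling of the curvature weights for $k<n$.

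First I would apply this inequality to the test functions
\begin{equation*}
    f_l(z)=|X(z)|^{\alpha}\langle X(z),E_l\rangle-\frac{\int_{\mathbb{S}^n}|X|^{\alpha}\langle X,E_l\rangle\,dV_k}{\int_{\mathbb{S}^n}dV_k},\quad l=1,\ldots,n+1,
\end{equation*}
exactly as in (\ref{2.30}) but with $dV_n$ replaced by $dV_k$, so that $\int f_l\,dV_k=0$ for each $l$. Summing the spectral inequality over $l$ immediately yields the left-hand side of (\ref{a.1}) together with the correction $k|\int |X|^{\alpha}X\,dV_k|^2/\int dV_k$. It then remains to compute $\sum_l\sigma_k^{ij}\nabla_i f_l\nabla_j f_l$ in a local frame diagonalizing $W$. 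Using $\nabla_i X=\lambda_i e_i$, $u_i=\langle X,e_i\rangle$, the orthonormal-basis identities $\sum_l\langle X,E_l\rangle\langle v,E_l\rangle=\langle X,v\rangle$ and $\sum_l\langle e_i,E_l\rangle\langle e_j,E_l\rangle=\delta_{ij}$, together with the Newton identity $\sum_i\sigma_{k-1}(\lambda|i)\lambda_i^2=\sigma_1\sigma_k-(k+1)\sigma_{k+1}$, the quadratic expansion of $\nabla_i f_l\nabla_j f_l$ reduces to
\begin{equation*}
    \sum_l\sigma_k^{ij}\nabla_i f_l\nabla_j f_l=|X|^{2\alpha}\bigl(\sigma_1\sigma_k-(k+1)\sigma_{k+1}\bigr)+(\alpha^2+2\alpha)|X|^{2\alpha-2}\sum_i\sigma_{k-1}(\lambda|i)\lambda_i^2 u_i^2.
\end{equation*}

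The first summand, integrated against $u^2\,d\sigma$, produces the middle term of (\ref{a.1}) directly, since $u^2(\sigma_1\sigma_k-(k+1)\sigma_{k+1})\,d\sigma=u(\sigma_1-(k+1)\sigma_{k+1}/\sigma_k)\,dV_k$. The last summand, weighted by $u^2(\alpha^2+2\alpha)\ge 0$, is the main obstacle. When $k=n$ the identity $\sigma_{n-1}(\lambda|i)\lambda_i=\sigma_n$ collapses the sum to $\sigma_n\sum_i\lambda_i u_i^2=\sigma_n|X|\langle\nabla u,\nabla|X|\rangle$ and the proof of Lemma \ref{Lem 2.4-2} applies verbatim; for $1\le k<n$ no such collapse is available, and one must bound $\sum_i\sigma_{k-1}(\lambda|i)\lambda_i^2 u_i^2$ pointwise. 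The key estimates I plan to exploit are the elementary inequality $\sigma_{k-1}(\lambda|i)\lambda_i\le\sigma_k$ (from $\sigma_k=\sigma_{k-1}(\lambda|i)\lambda_i+\sigma_k(\lambda|i)$), Newton--Maclaurin inequalities relating $\sigma_k$ and $\sigma_n$, and the trivial bound $u_i^2\le|\nabla u|^2$. The hypothesis $\alpha\ge 0$ is crucial here: it guarantees $\alpha^2+2\alpha\ge 0$ so that the coefficient has a fixed sign, allowing a cruder pointwise upper bound to be absorbed. Repackaging the resulting integral via $\sigma_n\,d\sigma=dV_n/u$ then assembles everything into the form (\ref{a.1}), completing the proof.
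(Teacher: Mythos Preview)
Your setup is exactly the paper's: same test functions $f_l$, same $L_k$ spectral inequality (the paper cites Lemma~\ref{Lem 2.4-1} but is really using the $\sigma_k$ version, i.e.\ \cite[Lemma~3.1]{IM23}), and the same pointwise computation
\[
\sum_{l}\sigma_k^{ij}\nabla_i f_l\nabla_j f_l
=\sum_i\frac{\partial\sigma_k}{\partial\lambda_i}\lambda_i^2\bigl(|X|^{2\alpha}+(\alpha^2+2\alpha)|X|^{2\alpha-2}u_i^2\bigr).
\]
Where you diverge is in the treatment of the cross term $\sum_i\sigma_{k-1}(\lambda|i)\lambda_i^2 u_i^2$, and the divergence is caused by a typo in the stated inequality: the $dV_n$ in the last line of \eqref{a.1} should be $dV_k$. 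This is confirmed both by the paper's own proof (which ends with $dV_k$ in both integrals) and by the way the lemma is invoked in \eqref{a.5}.

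Once you read $dV_k$, no Newton--Maclaurin comparison between $\sigma_k$ and $\sigma_n$ is needed, and the bound $\sigma_{k-1}(\lambda|i)\lambda_i\le\sigma_k$ plays no role either. The paper simply uses the one ingredient you already listed, $u_i^2\le|\nabla u|^2$, together with positivity of the weights $\sigma_{k-1}(\lambda|i)\lambda_i^2>0$ (strict convexity), to get
\[
\sum_i\sigma_{k-1}(\lambda|i)\lambda_i^2 u_i^2\le |\nabla u|^2\sum_i\sigma_{k-1}(\lambda|i)\lambda_i^2=|\nabla u|^2\bigl(\sigma_1\sigma_k-(k+1)\sigma_{k+1}\bigr),
\]
and then $u^2(\sigma_1\sigma_k-(k+1)\sigma_{k+1})\,d\sigma=u\bigl(\sigma_1-(k+1)\sigma_{k+1}/\sigma_k\bigr)\,dV_k$ finishes. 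Your proposed route through $\sigma_n$ would require something like $\sigma_k^2\le\sigma_n\sigma_k$ uniformly, which is false, so that detour cannot close; but it is also unnecessary. With the typo corrected, your argument and the paper's coincide.
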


\begin{proof}
Let $\{E_l\}_{l=1}^{n+1}$ be an orthonormal basis of $\mathbb{R}^{n+1}$. Suppose $\{e_i\}_{i=1}^n$ is a local orthonormal frame for $\mathbb{S}^n$ such that $(u_{ij}+u\delta_{ij})(z_0)=\lambda_i(z_0)\delta_{ij}$. For $l=1,\ldots,n+1$, we define the functions $f_l:\mathbb{S}^n\to\mathbb{R}$
\begin{equation}
    f_l(z)=|X(z)|^{\alpha}\langle X(z),E_l\rangle-\frac{\int_{\mathbb{S}^n}|X(z)|^{\alpha}\langle X(z),E_l\rangle dV_k}{\int_{\mathbb{S}^n}dV_k}.
\end{equation}
Since $\int_{\mathbb{S}^n}f_l dV_k=0$ for $1\le l\le n+1$, applying Lemma \ref{Lem 2.4-1} to $f_l$ and summing over $l$, we have
\begin{align*}
     k\sum_l \int_{\mathbb{S}^n}f_l^2 dV_k
    =k\left[\int_{\mathbb{S}^n}|X|^{2\alpha+2}dV_k-\frac{|\int_{\mathbb{S}^n}|X|^{\alpha}X dV_k|^2}{\int_{\mathbb{S}^n}dV_k}\right]
    \le \sum_l \int_{\mathbb{S}^n}u^2 \sigma_k^{ij}\nabla_i f_l\nabla_j f_l d\sigma.
\end{align*}
Note that $\sum_i \langle e_i,X\rangle^2=|\nabla u|^2$ and $\sum_i \frac{\partial \sigma_k}{\partial \lambda_i}\lambda_i^2=\sigma_1\sigma_k-(k+1)\sigma_{k+1}$. It follows from $(u_{ij}+u\delta_{ij})(z_0)=\lambda_i(z_0)\delta_{ij}$ that $\nabla_i X=\sum_j (u_{ij}+u\delta_{ij})e_j=\lambda_i e_i$ at $z_0$. Since $\alpha\ge 0$, we have
\begin{align*}
    \sum_{l,i,j}\sigma_k^{ij}\nabla_i f_l\nabla_j f_l
    &=\sum_{l,i}\frac{\partial \sigma_k}{\partial \lambda_i}(\nabla_i(|X|^{\alpha})\langle X,E_l\rangle+|X|^{\alpha}\langle \nabla_i X,E_l\rangle)^2\\
    &=\sum_{l,i}\frac{\partial \sigma_k}{\partial \lambda_i}(\alpha|X|^{\alpha-2}\langle \lambda_i e_i,X\rangle\langle X,E_l\rangle+|X|^{\alpha}\langle \lambda_i e_i,E_l\rangle)^2\\
    &=\sum_i \frac{\partial \sigma_k}{\partial \lambda_i}\lambda_i^2 (|X|^{2\alpha}+(\alpha^2+2\alpha)|X|^{2\alpha-2}\langle e_i,X\rangle^2)\\
    &\le (|X|^{2\alpha}+(\alpha^2+2\alpha)|X|^{2\alpha-2}|\nabla u|^2)(\sigma_1\sigma_k-(k+1)\sigma_{k+1}).
\end{align*}
Therefore, we obtain
\begin{align*}
    & k\left[\int_{\mathbb{S}^n}|X|^{2\alpha+2}dV_k-\frac{|\int_{\mathbb{S}^k}|X|^{\alpha}X dV_k|^2}{\int_{\mathbb{S}^n}dV_n}\right]\\
    \le & \int_{\mathbb{S}^n}u^2 (|X|^{2\alpha}+(\alpha^2+2\alpha)|X|^{2\alpha-2}|\nabla u|^2)(\sigma_1\sigma_k-(k+1)\sigma_{k+1})d\sigma\\
    =& \int_{\mathbb{S}^n}u (|X|^{2\alpha}+(\alpha^2+2\alpha)|X|^{2\alpha-2}|\nabla u|^2) \left(\sigma_1-(k+1)\frac{\sigma_{k+1}}{\sigma_k}\right) dV_k.
\end{align*}
This completes the proof of Lemma \ref{lem a}.
\end{proof}


Now we consider the following isotropic $L_p$ dual Christoffel-Minkowski problem
\begin{equation}\label{a.3}
    u^{1-p}r^{q-k-1}\sigma_k(W)=1\quad \text{on}\ \mathbb{S}^n.    
\end{equation}
Corollary \ref{cor-6} states the uniqueness of solutions to (\ref{a.3}) when $p\ge 1$ and $q\le k+1$ with at least one of these inequalities being strict. As for the case with the origin-symmetric assumption, in \cite[Theorem 1.6]{IM23}, by choosing $\varphi(s,t)=s^p t^{k+1-q}$, we obtain
\begin{thm}[\cite{IM23}]\label{thm-IM}
Let $n\ge 2$ and $1\le k \le n-1$. Suppose $p\ge 1-k$ and $q\le k+1$. Then the smooth strictly convex origin-symmetric solution $M$ to (\ref{a.3}) must be an origin-centred sphere.
\end{thm}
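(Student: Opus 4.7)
The plan is to apply Lemma~\ref{lem a} with $\alpha=0$ (which kills the last, $dV_n$-weighted term) and transform the right-hand side until only manifestly nonnegative integrals remain. Since $M$ is origin-symmetric, $\int_{\mathbb{S}^n}X\,dV_k=0$, so the lemma reduces to
\[
    k\int_{\mathbb{S}^n}|X|^2\,dV_k \;\le\; \int_{\mathbb{S}^n}u\Bigl(\sigma_1-(k+1)\tfrac{\sigma_{k+1}}{\sigma_k}\Bigr)dV_k.
\]

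To handle the right-hand side, I would write $u\sigma_1=u(\Delta u+nu)$, substitute $dV_k=u^p|X|^{k+1-q}\,d\sigma$ from (\ref{a.3}), and integrate $\int u^{p+1}|X|^{k+1-q}\Delta u\,d\sigma$ by parts to obtain
\[
    \int u\Delta u\,dV_k=-(p+1)\int|\nabla u|^2\,dV_k-(k+1-q)\int\frac{u}{|X|}\langle\nabla u,\nabla|X|\rangle\,dV_k.
\]
For $\sigma_{k+1}$, Euler's identity gives $(k+1)\sigma_{k+1}=T^{ij}_kW_{ij}$ where $T^{ij}_k=\partial\sigma_{k+1}/\partial W_{ij}$ is the Newton tensor of $W$. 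Using that $W=\nabla^2u+ug_0$ is Codazzi on $\mathbb{S}^n$ (the Ricci commutator $[\nabla_c,\nabla_a]u_b=u_ag_{cb}-u_cg_{ab}$ exactly cancels the $ug_{ab}$-contribution), one gets $\nabla_jT^{ij}_k=0$, so integration by parts with $f=u^2$ yields the weighted Minkowski-type identity
\[
    (k+1)\int u^2\sigma_{k+1}\,d\sigma=(n-k)\int u^2\,dV_k-2\int u\,T^{ij}_k u_iu_j\,d\sigma.
\]

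Combining the two computations, the $nu$ and $(n-k)u$ contributions cancel and Lemma~\ref{lem a} becomes
\[
    (k+p+1)\int|\nabla u|^2\,dV_k+(k+1-q)\int\frac{u}{|X|}\langle\nabla u,\nabla|X|\rangle\,dV_k \;\le\; 2\int u\,T^{ij}_k u_iu_j\,d\sigma.
\]
Applying the Newton recursion $W_{jl}T^{jm}_{k-1}=\sigma_k\delta_{lm}-T^{lm}_k$ in the form $T^{ij}_ku_iu_j=\sigma_k|\nabla u|^2-T^{jm}_{k-1}W_{jl}u_lu_m$, and using $\langle\nabla u,\nabla|X|\rangle=u_iu_jW_{ij}/|X|$, the previous display rearranges to
\[
    (k+p-1)\int|\nabla u|^2\,dV_k+(k+1-q)\int\frac{u\,u_iu_jW_{ij}}{|X|^2}\,dV_k+2\int u\,T^{jm}_{k-1}W_{jl}u_lu_m\,d\sigma\;\le\;0.
\]
Under $p\ge1-k$ and $q\le k+1$ all three coefficients are nonnegative, and since $W$ is positive definite and $T_{k-1}$ is positive definite (as $W\in\Gamma_n\subset\Gamma_k$), each integrand is nonnegative. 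Hence all three integrals must vanish. If $p>1-k$ the first term forces $\nabla u\equiv0$; if $q<k+1$ the second does; in the critical boundary case $p=1-k$, $q=k+1$ the third term still gives $T^{ii}_{k-1}\lambda_iu_i^2\equiv 0$ at every diagonal point of $W$, which (since $T^{ii}_{k-1},\lambda_i>0$) again forces $\nabla u\equiv0$. Therefore $u$ is constant and $M$ is an origin-centred sphere.

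The main obstacle is packaging the pieces so that only nonnegative integrals survive. This hinges on the Codazzi property of $W$ on $\mathbb{S}^n$ (which yields the weighted Minkowski-type identity for $\sigma_{k+1}$) and on the Newton recursion used to decompose $T^{ij}_k u_i u_j$. The restriction $k\le n-1$ in Theorem~\ref{thm-IM} is what makes the ingredient $T_{k-1}$ (and hence the third nonnegative term above) genuinely meaningful when one reaches the critical corner $(p,q)=(1-k,k+1)$.
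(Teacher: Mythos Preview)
Your proof is correct and follows essentially the same route as the paper's proof of Theorem~\ref{mainthm-a} specialized to $\alpha_*=0$ (which is how the paper recovers Theorem~\ref{thm-IM}, itself cited from \cite{IM23}): apply Lemma~\ref{lem a} at $\alpha=0$, integrate $\int u\Delta u\,dV_k$ by parts, and rewrite $(k+1)\sigma_{k+1}=\sigma_{k+1}^{ij}W_{ij}$ followed by integration by parts using the divergence-free Newton tensor. The one refinement you make is to keep the equalities and invoke the Newton recursion $T^{ij}_ku_iu_j=\sigma_k|\nabla u|^2-T^{jm}_{k-1}W_{jl}u_lu_m$, isolating a third nonnegative integral with strictly positive coefficient $2$; this forces $\nabla u\equiv 0$ directly, including at the corner $(p,q)=(1-k,k+1)$, whereas the paper's inequalities (\ref{a.6}) and (\ref{a.8}) discard exactly these terms and would require a separate equality-tracing step there.

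One minor expositional correction: your closing remark that the restriction $k\le n-1$ is what makes $T_{k-1}$ ``genuinely meaningful'' is not the real reason. The tensor $T_{k-1}$ is well-defined and positive definite for every $1\le k\le n$ (in particular $T_{n-1}=\sigma_n^{ij}$ when $k=n$), and your argument goes through verbatim for $k=n$. The theorem is stated for $k\le n-1$ only because the $k=n$ case is the $L_p$ dual Minkowski problem, handled separately in the paper (Corollary~\ref{cor-8}) with a strictly larger admissible range for $p$.
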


As an extension of the above theorem, by applying Lemma \ref{lem a} for $k<n$, we have the following result:
\begin{thm}\label{mainthm-a}
Let $n\ge 2$ and $1\le k \le n-1$. Suppose $p\ge 1-k$ and $q\le k+1+ 2\alpha_*$, where $\alpha_*$ is the maximum value satisfying the following conditions:
\begin{equation}\label{a.4}
\begin{split}
    \left\{\begin{array}{l}
    0\le \alpha\le 1,\\
    -2\alpha(p+1)+(\alpha^2+2\alpha)(3p+1)\le p+k-1,\\
    2\alpha(n+2)+(\alpha^2+2\alpha)(2n+k+6)\le p+k-1.\end{array}\right.
\end{split}
\end{equation}
Then the smooth strictly convex origin-symmetric solution $M$ to (\ref{a.3}) must be an origin-centred sphere.
\end{thm}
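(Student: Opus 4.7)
The plan is to adapt the argument of Theorem \ref{mainthm-6} to the intermediate-curvature case $k<n$, using Lemma \ref{lem a} in place of Lemma \ref{Lem 2.4-2}. Since $M$ is origin-symmetric, the barycentric correction $\int_{\mathbb{S}^n}|X|^{\alpha}X\,dV_k$ vanishes for every $\alpha\ge 0$, so Lemma \ref{lem a} reduces to
\begin{equation*}
    k\int_{\mathbb{S}^n}|X|^{2\alpha+2}\,dV_k \le \int_{\mathbb{S}^n}\! u|X|^{2\alpha}\!\left(\sigma_1-(k+1)\tfrac{\sigma_{k+1}}{\sigma_k}\right) dV_k +(\alpha^2+2\alpha)\!\int_{\mathbb{S}^n}\!u|X|^{2\alpha-2}|\nabla u|^2\!\left(\sigma_1-(k+1)\tfrac{\sigma_{k+1}}{\sigma_k}\right) dV_k.
\end{equation*}
Using the equation (\ref{a.3}), equivalently $dV_k=u^p|X|^{k+1-q}d\sigma$, I would convert each $\sigma_1=\Delta u+nu$ piece into a spherical integral of $u^{p+1}|X|^{2\alpha+k+1-q}\Delta u$ and integrate by parts in the style of (\ref{5.3}), producing the two families $\int|X|^{2\alpha}|\nabla u|^2\,dV_k$ and $\int u|X|^{2\alpha-1}\langle\nabla u,\nabla |X|\rangle\,dV_k$. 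A second integration by parts on the $|\nabla u|^2$-weighted integrand, in the spirit of (\ref{5.4-*})--(\ref{5.4}), then yields the higher-order terms $\int|X|^{2\alpha-2}|\nabla u|^4\,dV_k$ and $\int|X|^{2\alpha-2}u^2|\nabla u|^2\,dV_k$.

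The new feature compared with Theorem \ref{mainthm-6} is the $\sigma_{k+1}/\sigma_k$ term, which is absent when $k=n$. My plan is to rewrite $u^2(\sigma_1\sigma_k-(k+1)\sigma_{k+1})=u^2\sigma_k^{ij}(W)W_{ip}W_{pj}$ and use the divergence-freeness of the $k$-th Newton tensor $T^k$ (Proposition \ref{prop-1}), together with the equation, to convert this contribution via integration by parts back into the same pool of $|\nabla u|^{\ell}$ integrals already in play; the remaining linear-in-$\sigma_{k+1}$ piece is handled by a Minkowski-type identity parallel to (\ref{2.16}). After collecting, the inequality should collapse into the schematic form
\begin{equation*}
    0 \le A(\alpha)\int|X|^{2\alpha-2}|\nabla u|^4\,dV_k + B(\alpha)\int|X|^{2\alpha-2}u^2|\nabla u|^2\,dV_k + C(\alpha)\int|X|^{2\alpha}|\nabla u|^2\,dV_k,
\end{equation*}
for explicit polynomials $A,B,C$ in $\alpha,p,q,k,n$. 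The two nontrivial inequalities in (\ref{a.4}) are engineered precisely so that $A(\alpha)\le 0$ and $B(\alpha)\le 0$, while the bound $0\le\alpha\le 1$ keeps the $(\alpha^2+2\alpha)$-weighted Cauchy--Schwarz step in the proof of Lemma \ref{lem a} on the right side of the sign. Choosing $\alpha=\alpha_*$, the hypothesis $q\le k+1+2\alpha_*$ forces at least one coefficient to be strictly negative, as in the concluding argument of Theorem \ref{mainthm-6}, whence $\nabla u\equiv 0$, $u$ is constant, and $M$ is an origin-centred sphere.

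The main obstacle is clearly the $\sigma_{k+1}/\sigma_k$ contribution, which has no analogue in the top-degree case $k=n$ exploited in Theorem \ref{mainthm-6}. Making its integration by parts go through cleanly requires using the Newton-tensor structure of $T^k$ and a careful accounting of the cross terms introduced by the weights $u^{p+1}|X|^{2\alpha+k+1-q}$. A secondary difficulty is verifying that the resulting coefficients $A,B,C$ fit exactly into the quadratic-in-$\alpha$ template of (\ref{a.4}), and that the largest $\alpha=\alpha_*$ compatible with all three constraints in (\ref{a.4}) really delivers the sharp range $q\le k+1+2\alpha_*$ in the statement.
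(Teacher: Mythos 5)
Your high-level strategy coincides with the paper's: invoke Lemma \ref{lem a} for $k<n$, exploit the symmetry to kill the barycentric term, and then integrate by parts against the weight $dV_k=u^p|X|^{k+1-q}d\sigma$. That much is right, and your identification of the $\sigma_{k+1}/\sigma_k$ term as the essential new obstruction is also correct. However, several details of how you propose to discharge that obstruction diverge from what actually works, and at least one step in your sketch would not go through as stated.

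First, the paper does \emph{not} bundle $\sigma_1\sigma_k-(k+1)\sigma_{k+1}$ into a single quantity $\sigma_k^{ij}W_{ip}W_{pj}$ and then integrate by parts against the Newton tensor $T^k$. That rewriting is valid pointwise, but integrating $\sigma_k^{ij}W_{ip}W_{pj}$ by parts would require differentiating a product of two copies of $W$ under the weight, which produces third-order terms in $u$ with no obvious sign. Instead, the paper \emph{splits} the two pieces of the Newton-tensor trace: the $\sigma_1$ part is handled exactly as in the proof of Theorem \ref{mainthm-6} via $\sigma_1=\Delta u+nu$ and the usual integration by parts (giving (\ref{a.6})–(\ref{a.7})); the $\sigma_{k+1}/\sigma_k$ part is treated separately by writing $(k+1)\sigma_{k+1}=\sigma_{k+1}^{ij}(u_{ij}+u\delta_{ij})$ and using the divergence-freeness of the cofactor tensor $\sigma_{k+1}^{ij}$ (giving (\ref{a.8})–(\ref{a.9})). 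Keeping the two pieces apart is what makes the computation tractable.

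Second, the $\sigma_{k+1}/\sigma_k$ integration by parts does \emph{not} convert this contribution ``cleanly back into the same pool of $|\nabla u|^\ell$ integrals.'' After the IBP one needs the pointwise algebraic inequalities $0\le\sum_{i,j}u_iu_j\sigma_{k+1}^{ij}\le|\nabla u|^2\sigma_k$ and $\sum_i\lambda_iu_i^2\sigma_{k+1}^{ii}\le|\nabla u|^2\sigma_1\sigma_k$, which follow from $\sigma_{k+1}^{ii}=\sigma_k-\lambda_i\sigma_k^{ii}$; these are one-sided bounds, not identities, and they are precisely where the extra $\alpha$-dependent terms in the second and third constraints of (\ref{a.4}) come from. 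Your sketch does not anticipate these estimates, and without them the sign bookkeeping does not close. Relatedly, after bounding the $\sigma_1$-weighted pieces one must loop back through (\ref{a.7}) a second time inside (\ref{a.8})–(\ref{a.9}), which is another step your sketch elides.

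Third, there is no Minkowski formula in the proof: the remaining ``linear-in-$\sigma_{k+1}$ piece'' ends up as $\int|X|^{2\alpha_*}u^3\sigma_{k+1}^{ij}\delta_{ij}\,d\sigma=(n-k)\int|X|^{2\alpha_*}u^2\,dV_k$, which is a nonnegative term that is simply discarded by the cofactor-trace identity $\sigma_{k+1}^{ij}\delta_{ij}=(n-k)\sigma_k$; nothing ``parallel to (\ref{2.16})'' is invoked.

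Finally, you do not carry out the matching between the collected coefficients and (\ref{a.4}), nor confirm that your proposed organization of the integration by parts produces exactly the coefficient template asserted there; given the different grouping you suggest, it is not clear that it would. So: right lemma, right symmetry reduction, right class of techniques, but the key step for $\sigma_{k+1}/\sigma_k$ is misorganized, the auxiliary cofactor estimates are missing, and the Minkowski-formula claim is a red herring.
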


\begin{rem}
    When $\alpha_*=0$, Theorem \ref{mainthm-a} reduces to Theorem \ref{thm-IM}.
\end{rem}

\begin{proof}[Proof of Theorem \ref{mainthm-a}]
Suppose there exists a smooth strictly origin-symmetric convex solution $M$ to (\ref{a.3}). For convenience, we denote $\tau=q-k-1$. Then we have $dV_k=u^p |X|^{-\tau}d\sigma$.

Since $\int_{\mathbb{S}^n}|X|^{\alpha}X dV_k=0$ for any $\alpha\in\mathbb{R}$, it follows from (\ref{a.1}) that for $\alpha\ge 0$
\begin{equation}\label{a.5} 
\begin{split}
    k\int_{\mathbb{S}^n}|X|^{2\alpha+2}dV_k
    & \le 
    \int_{\mathbb{S}^n}|X|^{2\alpha}u \left(\Delta u+n u-(k+1)\frac{\sigma_{k+1}}{\sigma_k}\right) dV_k\\
    &\quad +(\alpha^2+2\alpha)\int_{\mathbb{S}^n}|X|^{2\alpha-2}|\nabla u|^2 u \left(\Delta u+n u-(k+1)\frac{\sigma_{k+1}}{\sigma_k}\right) dV_k. 
\end{split}
\end{equation}
Since $\alpha_*\ge 0$ and $\alpha_*-\frac{\tau}{2}\ge 0$, by using (\ref{5.2}), we obtain
\begin{equation}\label{a.6}
\begin{split}
     & \int_{\mathbb{S}^n}|X|^{2\alpha_*}u\Delta u dV_k
    = \int_{\mathbb{S}^n}|X|^{2\alpha_*-\tau}u^{p+1}\Delta u d\sigma\\
    =& -(p+1)\int_{\mathbb{S}^n}|X|^{2\alpha_*-\tau}u^{p}|\nabla u|^2 d\sigma 
    -\left(\alpha_*-\frac{\tau}{2}\right)\int_{\mathbb{S}^n}|X|^{2\alpha_*-\tau}u^{p+1}\langle \nabla u,\nabla|X|^2\rangle d\sigma\\
    \le& -(p+1)\int_{\mathbb{S}^n}|X|^{2\alpha_*-\tau}u^{p}|\nabla u|^2 d\sigma\\
    =& -(p+1)\int_{\mathbb{S}^n}|X|^{2\alpha_*}|\nabla u|^2 dV_k
\end{split}
\end{equation}
and
\begin{equation}\label{a.7}
\begin{split}
    & \int_{\mathbb{S}^n}|X|^{2\alpha_*-2}|\nabla u|^2 u\Delta u dV_k
    =\int_{\mathbb{S}^n}|X|^{2\alpha_*-\tau-2}|\nabla u|^2 u^{p+1}\Delta u d\sigma\\
    =& -(p+1)\int_{\mathbb{S}^n}|X|^{2\alpha_*-\tau-2}u^{p}|\nabla u|^4 d\sigma
    -\int_{\mathbb{S}^n}|X|^{2\alpha_*-\tau-2}u^{p+1}\langle \nabla u,\nabla|\nabla u|^2\rangle  d\sigma\\
    &\quad -\left(\alpha_*-\frac{\tau}{2}-1\right)\int_{\mathbb{S}^n}|X|^{2\alpha_*-\tau-4}u^{p+1}|\nabla u|^2\langle \nabla u,\nabla|X|^2\rangle d\sigma\\
    =& -(p+1)\int_{\mathbb{S}^n}|X|^{2\alpha_*-\tau-2}u^{p}|\nabla u|^4 d\sigma
    +2\int_{\mathbb{S}^n}|X|^{2\alpha_*-\tau-2}u^{p+2}|\nabla u|^2 d\sigma\\
    &\quad -\int_{\mathbb{S}^n}\left(|X|^2+\left(\alpha_*-\frac{\tau}{2}-1\right)|\nabla u|^2\right)|X|^{2\alpha_*-\tau-4}u^{p+1}\langle \nabla u,\nabla|X|^2\rangle d\sigma\\
    \le & -(p+1)\int_{\mathbb{S}^n}|X|^{2\alpha_*-\tau-2}u^{p}|\nabla u|^4 d\sigma
    +2\int_{\mathbb{S}^n}|X|^{2\alpha_*-\tau-2} u^{p+2}|\nabla u|^2 d\sigma\\
    =& -(p+1)\int_{\mathbb{S}^n}|X|^{2\alpha-2}|\nabla u|^4 dV_k
    +2\int_{\mathbb{S}^n}|X|^{2\alpha_*-2}u^{2}|\nabla u|^2 dV_k.
\end{split}
\end{equation}

Suppose that $\{e_i\}_{i=1}^{n}$ is a local orthonormal frame for $\mathbb{S}^n$ such that $(u_{ij}+u\delta_{ij})(z_0)=\lambda_i(z_0)\delta_{ij}$, then $\langle X,X_i\rangle=\langle X,\lambda_i e_i\rangle=\lambda_i u_i$. It is well-known that $\sigma_{k+1}^{ii}=\sigma_k-\lambda_i\sigma_k^{ii}$ at $z_0$. Then we have at $z_0$
\begin{align*}
    0\le \sum_{i,j} u_i u_j\sigma_{k+1}^{ij}=\sum_i u_i^2(\sigma_k-\lambda_i\sigma_k^{ii})\le |\nabla u|^2\sigma_k,
\end{align*}
and
\begin{align*}
    \sum_{i,j} \langle X,X_i\rangle u_j\sigma_{k+1}^{ij}=\sum_i \lambda_i u_i^2(\sigma_k-\lambda_i\sigma_k^{ii})\le |\nabla u|^2\sigma_1\sigma_k.
\end{align*}
Note that $\nabla_i\sigma_{k+1}^{ij}=0$ and $(k+1)\sigma_{k+1}=\sigma_{k+1}^{ij}(u_{ij}+u\delta_{ij})$. Since $0\le \alpha_*\le 1$, by using (\ref{a.7}), we calculate
\begin{equation}\label{a.8}
\begin{split}
    & (k+1)\int_{\mathbb{S}^n}|X|^{2\alpha_*} u \frac{\sigma_{k+1}}{\sigma_k} dV_k\\
    =& (k+1)\int_{\mathbb{S}^n}|X|^{2\alpha_*} u^2 \sigma_{k+1} d\sigma 
    =\int_{\mathbb{S}^n}|X|^{2\alpha_*} u^2 \sigma_{k+1}^{ij}(u_{ij}+u\delta_{ij}) d\sigma \\
    =& -2\int_{\mathbb{S}^n}|X|^{2\alpha_*} u u_i u_j \sigma_{k+1}^{ij} d\sigma 
    -2\alpha_*\int_{\mathbb{S}^n}|X|^{2\alpha_*-2}u^2 \langle X,X_i\rangle  u_j \sigma_{k+1}^{ij} d\sigma 
    +\int_{\mathbb{S}^n}|X|^{2\alpha_*} u^3 \sigma_{k+1}^{ij}\delta_{ij} d\sigma \\
    \ge & -2\int_{\mathbb{S}^n}|X|^{2\alpha_*}|\nabla u|^2 dV_k
    -2\alpha_*\int_{\mathbb{S}^n}|X|^{2\alpha_*-2} u |\nabla u|^2 \sigma_1 dV_k 
    +(n-k)\int_{\mathbb{S}^n}|X|^{2\alpha_*} u^2 dV_k\\
    \ge & -2\int_{\mathbb{S}^n}|X|^{2\alpha_*}|\nabla u|^2 dV_k
    +2\alpha_* (p+1)\int_{\mathbb{S}^n}|X|^{2\alpha_*-2}|\nabla u|^4 dV_k
    \\
    &\quad -2\alpha_* (n+2)\int_{\mathbb{S}^n}|X|^{2\alpha_*-2}u^{2}|\nabla u|^2 dV_k
    +(n-k)\int_{\mathbb{S}^n}|X|^{2\alpha_*} u^2 dV_k
\end{split}
\end{equation}
and
\begin{equation}\label{a.9}
\begin{split}
    & (k+1)\int_{\mathbb{S}^n}|X|^{2\alpha_*-2}|\nabla u|^2 u \frac{\sigma_{k+1}}{\sigma_k} dV_k
    =\int_{\mathbb{S}^n}|X|^{2\alpha_*-2}|\nabla u|^2 u^2 \sigma_{k+1}^{ij}(u_{ij}+u\delta_{ij}) d\sigma \\
    =& -2\int_{\mathbb{S}^n}|X|^{2\alpha_*-2}|\nabla u|^2 u u_i u_j \sigma_{k+1}^{ij} d\sigma 
    -2(\alpha_*-1)\int_{\mathbb{S}^n}|X|^{2\alpha_*-4}|\nabla u|^2 u^2 \langle X,X_i\rangle  u_j \sigma_{k+1}^{ij} d\sigma\\
    &\quad -\int_{\mathbb{S}^n}|X|^{2\alpha_*-2} u^2(|\nabla u|^2)_i u_j \sigma_{k+1}^{ij} d\sigma
    +\int_{\mathbb{S}^n}|X|^{2\alpha_*-2}|\nabla u|^2 u^3 \sigma_{k+1}^{ij}\delta_{ij} d\sigma \\
    =& -2\int_{\mathbb{S}^n}|X|^{2\alpha_*-2}|\nabla u|^2 u u_i u_j \sigma_{k+1}^{ij} d\sigma 
    -2\int_{\mathbb{S}^n}\left((\alpha_*-1)|\nabla u|^2+|X|^2\right)|X|^{2\alpha_*-4} u^2 \langle X,X_i\rangle  u_j \sigma_{k+1}^{ij} d\sigma\\
    &\quad +2\int_{\mathbb{S}^n}|X|^{2\alpha_*-2} u^3 u_i u_j \sigma_{k+1}^{ij} d\sigma
    +(n-k)\int_{\mathbb{S}^n}|X|^{2\alpha_*-2}|\nabla u|^2 u^2 dV_k \\
    \ge & -2\int_{\mathbb{S}^n}|X|^{2\alpha_*-2}|\nabla u|^4 dV_k
    -2\int_{\mathbb{S}^n}\left((\alpha_*-1)|\nabla u|^2+|X|^2\right)|X|^{2\alpha_*-4} u |\nabla u|^2 \sigma_1  dV_k\\
    &\quad +(n-k)\int_{\mathbb{S}^n}|X|^{2\alpha_*-2}|\nabla u|^2 u^2 dV_k\\
    \ge &\ 2p\int_{\mathbb{S}^n}|X|^{2\alpha_*-2}|\nabla u|^4 dV_k
    -(n+k+4)\int_{\mathbb{S}^n}|X|^{2\alpha_*-2}|\nabla u|^2 u^2 dV_k.
\end{split}
\end{equation}
Substituting (\ref{a.6}), (\ref{a.7}), (\ref{a.8}) and (\ref{a.9}) into (\ref{a.5}), we obtain
\begin{equation}\label{a.10}
\begin{split}
    0 &\le \left(1-k-p-2\alpha_* (p+1)+(\alpha_*^2+2\alpha_*)(3p+1)\right)\int_{\mathbb{S}^n}|X|^{2\alpha_*-2}|\nabla u|^4 dV_k\\
    &\quad +\left(1-k-p+2\alpha_* (n+2)+(\alpha_*^2+2\alpha_*)(2n+k+6)\right)\int_{\mathbb{S}^n}|X|^{2\alpha_*-2}|\nabla u|^2 u^2 dV_k.
\end{split}
\end{equation}
According to the conditions (\ref{a.4}), it can be deduced that the right hand side of (\ref{a.10}) is less than or equal to $0$. Hence $\nabla u\equiv 0$ on $\mathbb{S}^n$, and then $M$ is an origin-centred sphere. We complete the proof of Theorem \ref{mainthm-a}.
\end{proof}

$\ $

\begin{bibdiv}
\begin{biblist}

\bib{An97}{article}{
   author={Andrews, Ben},
   title={Monotone quantities and unique limits for evolving convex
   hypersurfaces},
   journal={Internat. Math. Res. Notices},
   date={1997},
   number={20},
   pages={1001--1031},
}

\bib{And99}{article}{
   author={Andrews, Ben},
   title={Gauss curvature flow: the fate of the rolling stones},
   journal={Invent. Math.},
   volume={138},
   date={1999},
   number={1},
   pages={151--161},
}

\bib{And03}{article}{
   author={Andrews, Ben},
   title={Classification of limiting shapes for isotropic curve flows},
   journal={J. Amer. Math. Soc.},
   volume={16},
   date={2003},
   number={2},
   pages={443--459},
}

\bib{ACGL20}{book}{
   author={Andrews, Ben},
   author={Chow, Bennett},
   author={Guenther, Christine},
   author={Langford, Mat},
   title={Extrinsic geometric flows},
   series={Graduate Studies in Mathematics},
   volume={206},
   publisher={American Mathematical Society, Providence, RI},
   date={2020},
   pages={xxviii+759},
}

\bib{AGN16}{article}{
   author={Andrews, Ben},
   author={Guan, Pengfei},
   author={Ni, Lei},
   title={Flow by powers of the Gauss curvature},
   journal={Adv. Math.},
   volume={299},
   date={2016},
   pages={174--201},
}

\bib{BBC19}{article}{
   author={Bianchi, Gabriele},
   author={B\"{o}r\"{o}czky, K\'{a}roly J.},
   author={Colesanti, Andrea},
   title={The Orlicz version of the $L_p$ Minkowski problem for
   $-n<p<0$},
   journal={Adv. in Appl. Math.},
   volume={111},
   date={2019},
   pages={101937, 29},
}

\bib{Bor22}{article}{
    author={B\"{o}r\"{o}czky, K\'{a}roly J.},
	title={The Logarithmic Minkowski conjecture and the $L_p$-Minkowski Problem},
	year={2022},
	eprint={arXiv:2210.00194v2},
	archivePrefix={arXiv},
	primaryClass={math.AP}
}

\bib{BF19}{article}{
   author={B\"{o}r\"{o}czky, K\'{a}roly J.},
   author={Fodor, Ferenc},
   title={The $L_p$ dual Minkowski problem for $p>1$ and $q>0$},
   journal={J. Differential Equations},
   volume={266},
   date={2019},
   number={12},
   pages={7980--8033},
}


\bib{BLYZ13}{article}{
   author={B\"{o}r\"{o}czky, K\'{a}roly J.},
   author={Lutwak, Erwin},
   author={Yang, Deane},
   author={Zhang, Gaoyong},
   title={The logarithmic Minkowski problem},
   journal={J. Amer. Math. Soc.},
   volume={26},
   date={2013},
   number={3},
   pages={831--852},
}

\bib{BCD17}{article}{
   author={Brendle, Simon},
   author={Choi, Kyeongsu},
   author={Daskalopoulos, Panagiota},
   title={Asymptotic behavior of flows by powers of the Gaussian curvature},
   journal={Acta Math.},
   volume={219},
   date={2017},
   number={1},
   pages={1--16},
}

\bib{BIS21}{article}{
   author={Bryan, Paul},
   author={Ivaki, Mohammad N.},
   author={Scheuer, Julian},
   title={Orlicz-Minkowski flows},
   journal={Calc. Var. Partial Differential Equations},
   volume={60},
   date={2021},
   number={1},
   pages={Paper No. 41, 25},
}

\bib{CNS85}{article}{
	author={Caffarelli, L.},
    author={Nirenberg, L.},
    author={Spruck, J.},
	title={The {D}irichlet problem for nonlinear second-order elliptic
              equations. {III}. {F}unctions of the eigenvalues of the
              {H}essian},
	journal={Acta Math.},
	volume={155},
    number={3-4},
	pages={261--301},
	year={1985}
}

\bib{CHZ19}{article}{
   author={Chen, Chuanqiang},
   author={Huang, Yong},
   author={Zhao, Yiming},
   title={Smooth solutions to the $L_p$ dual Minkowski problem},
   journal={Math. Ann.},
   volume={373},
   date={2019},
   number={3-4},
   pages={953--976},
}

\bib{CCL21}{article}{
   author={Chen, Haodi},
   author={Chen, Shibing},
   author={Li, Qi-Rui},
   title={Variations of a class of Monge-Amp\`ere-type functionals and their
   applications},
   journal={Anal. PDE},
   volume={14},
   date={2021},
   number={3},
   pages={689--716},
}

\bib{CL21}{article}{
   author={Chen, Haodi},
   author={Li, Qi-Rui},
   title={The $L_ p$ dual Minkowski problem and related parabolic flows},
   journal={J. Funct. Anal.},
   volume={281},
   date={2021},
   number={8},
   pages={Paper No. 109139, 65},
}

\bib{Ch20}{article}{
   author={Chen, Li},
   title={Uniqueness of solutions to $L_p$-Christoffel-Minkowski problem for
   $p<1$},
   journal={J. Funct. Anal.},
   volume={279},
   date={2020},
   number={8},
   pages={108692, 15},
}

\bib{CG21}{article}{
   author={Chen, Li},
   author={Gao, Shanze},
   title={Uniqueness of self-similar solutions to flows by quotient
   curvatures},
   journal={Math. Nachr.},
   volume={294},
   date={2021},
   number={10},
   pages={1850--1858},
}

\bib{CHLZ23}{article}{
	author={Chen, Shibing},
    author={Hu, Shengnan},
    author={Liu, Weiru},
    author={Zhao, Yiming},
	title={On the planar Gaussian-Minkowski problem},
	year={2023},
    eprint={arXiv:2303.17389},
	archivePrefix={arXiv},
	primaryClass={math.MG}
}

\bib{CY76}{article}{
   author={Cheng, Shiu Yuen},
   author={Yau, Shing Tung},
   title={On the regularity of the solution of the $n$-dimensional Minkowski
   problem},
   journal={Comm. Pure Appl. Math.},
   volume={29},
   date={1976},
   number={5},
   pages={495--516},
}

\bib{Ch59}{article}{
	author={Chern, Shiing-Shen},
	title={Integral formulas for hypersurfaces in Euclidean space and their applications to uniqueness theorems},
	journal={J. Math. Mech.},
	volume={8},
	pages={947--955},
	year={1959}
}

\bib{CD16}{article}{
	author={Choi, Kyeongsu},
    author={Daskalopoulos, Panagiota},
	title={Uniqueness of closed self-similar solutions to the Gauss curvature flow},
	year={2016},
    eprint={arXiv:1609.05487},
	archivePrefix={arXiv},
	primaryClass={math.DG}
}

\bib{CW06}{article}{
   author={Chou, Kai-Seng},
   author={Wang, Xu-Jia},
   title={The $L_p$-Minkowski problem and the Minkowski problem in
   centroaffine geometry},
   journal={Adv. Math.},
   volume={205},
   date={2006},
   number={1},
   pages={33--83},
}

\bib{GLM18}{article}{
   author={Gao, Shanze},
   author={Li, Haizhong},
   author={Ma, Hui},
   title={Uniqueness of closed self-similar solutions to
   $\sigma_k^{\alpha}$-curvature flow},
   journal={NoDEA Nonlinear Differential Equations Appl.},
   volume={25},
   date={2018},
   number={5},
   pages={Paper No. 45, 26},
}

\bib{GLW22}{article}{
   author={Gao, Shanze},
   author={Li, Haizhong},
   author={Wang, Xianfeng},
   title={Self-similar solutions to fully nonlinear curvature flows by high
   powers of curvature},
   journal={J. Reine Angew. Math.},
   volume={783},
   date={2022},
   pages={135--157},
}

\bib{Ga59}{article}{
	author={G\.{a}rding, Lars},
	title={An inequality for hyperbolic polynomials},
	journal={J. Math. Mech.},
	volume = {8},
	pages = {957--965},
	year = {1959}
}

\bib{GHWXY19}{article}{
   author={Gardner, Richard J.},
   author={Hug, Daniel},
   author={Weil, Wolfgang},
   author={Xing, Sudan},
   author={Ye, Deping},
   title={General volumes in the Orlicz-Brunn-Minkowski theory and a related Minkowski problem I},
   journal={Calc. Var. Partial Differential Equations},
   volume={58},
   date={2019},
   number={1},
   pages={Paper No. 12, 35},
}

\bib{GHXY20}{article}{
   author={Gardner, Richard J.},
   author={Hug, Daniel},
   author={Xing, Sudan},
   author={Ye, Deping},
   title={General volumes in the Orlicz-Brunn-Minkowski theory and a related Minkowski problem II},
   journal={Calc. Var. Partial Differential Equations},
   volume={59},
   date={2020},
   number={1},
   pages={Paper No. 15, 33},
}

\bib{Gu10}{article}{
	author={Guan, Pengfei},
	author={Ma, Xi-Nan},
	author={Trudinger, Neil},
	author={Zhu, Xiaohua},
	title={A form of {A}lexandrov-{F}enchel inequality},
	journal={Pure Appl. Math. Q.},
	volume = {6},
	number = {4, Special Issue: In honor of Joseph J. Kohn. Part 2},
	pages = {999--1012},
	year = {2010}
}

\bib{GX18}{article}{
   author={Guan, Pengfei},
   author={Xia, Chao},
   title={$L^p$ Christoffel-Minkowski problem: the case $1<p<k+1$},
   journal={Calc. Var. Partial Differential Equations},
   volume={57},
   date={2018},
   number={2},
   pages={Paper No. 69, 23},
}

\bib{HLYZ10}{article}{
	author={Haberl, Christoph},
	author={Lutwak, Erwin},
	author={Yang, Deane},
	author={Zhang, Gaoyong},
	title={The even {O}rlicz {M}inkowski problem},
	journal={Adv. Math.},
	volume = {224},
	number = {6},
	pages = {2485--2510},
	year = {2010}
}

\bib{HLW16}{article}{
   author={He, Yan},
   author={Li, Qi-Rui},
   author={Wang, Xu-Jia},
   title={Multiple solutions of the $L_p$-Minkowski problem},
   journal={Calc. Var. Partial Differential Equations},
   volume={55},
   date={2016},
   number={5},
   pages={Art. 117, 13},
}

\bib{HL08}{article}{
   author={He, Yijun},
   author={Li, Haizhong},
   title={Stability of area-preserving variations in space forms},
   journal={Ann. Global Anal. Geom.},
   volume={34},
   date={2008},
   number={1},
   pages={55--68},
}

\bib{HMS04}{article}{
   author={Hu, Changqing},
   author={Ma, Xi-Nan},
   author={Shen, Chunli},
   title={On the Christoffel-Minkowski problem of Firey's $p$-sum},
   journal={Calc. Var. Partial Differential Equations},
   volume={21},
   date={2004},
   number={2},
   pages={137--155},
}

\bib{HH12}{article}{
   author={Huang, Qingzhong},
   author={He, Binwu},
   title={On the Orlicz Minkowski problem for polytopes},
   journal={Discrete Comput. Geom.},
   volume={48},
   date={2012},
   number={2},
   pages={281--297},
}

\bib{HLX15}{article}{
   author={Huang, Yong},
   author={Liu, Jiakun},
   author={Xu, Lu},
   title={On the uniqueness of $L_p$-Minkowski problems: the constant
   $p$-curvature case in $\mathbb{R}^3$},
   journal={Adv. Math.},
   volume={281},
   date={2015},
   pages={906--927},
}

\bib{HLYZ16}{article}{
   author={Huang, Yong},
   author={Lutwak, Erwin},
   author={Yang, Deane},
   author={Zhang, Gaoyong},
   title={Geometric measures in the dual Brunn-Minkowski theory and their
   associated Minkowski problems},
   journal={Acta Math.},
   volume={216},
   date={2016},
   number={2},
   pages={325--388},
}

\bib{HZ18}{article}{
   author={Huang, Yong},
   author={Zhao, Yiming},
   title={On the $L_p$ dual Minkowski problem},
   journal={Adv. Math.},
   volume={332},
   date={2018},
   pages={57--84},
}

\bib{HLYZ05}{article}{
   author={Hug, Daniel},
   author={Lutwak, Erwin},
   author={Yang, Deane},
   author={Zhang, Gaoyong},
   title={On the $L_p$ Minkowski problem for polytopes},
   journal={Discrete Comput. Geom.},
   volume={33},
   date={2005},
   number={4},
   pages={699--715},
}

\bib{Iv19}{article}{
   author={Ivaki, Mohammad N.},
   title={Deforming a hypersurface by principal radii of curvature and
   support function},
   journal={Calc. Var. Partial Differential Equations},
   volume={58},
   date={2019},
   number={1},
   pages={Paper No. 1, 18},
}

\bib{Iv23}{article}{
    author={Ivaki, Mohammad N.},
	title={Uniqueness of solutions to a class of non-homogeneous curvature problems},
	year={2023},
    eprint={arXiv:2307.06252},
	archivePrefix={arXiv},
	primaryClass={math.DG}
}

\bib{IM23}{article}{
    author={Ivaki, Mohammad N.},
	author={Milman, Emanuel},
	title={Uniqueness of solutions to a class of isotropic curvature problems},
	year={2023},
    eprint={arXiv:2304.12839},
	archivePrefix={arXiv},
	primaryClass={math.DG}
}

\bib{JL19}{article}{
   author={Jian, Huaiyu},
   author={Lu, Jian},
   title={Existence of solutions to the Orlicz-Minkowski problem},
   journal={Adv. Math.},
   volume={344},
   date={2019},
   pages={262--288},
}

\bib{JLW15}{article}{
   author={Jian, Huaiyu},
   author={Lu, Jian},
   author={Wang, Xu-Jia},
   title={Nonuniqueness of solutions to the $L_p$-Minkowski problem},
   journal={Adv. Math.},
   volume={281},
   date={2015},
   pages={845--856},
}

\bib{JLZ16}{article}{
   author={Jian, Huaiyu},
   author={Lu, Jian},
   author={Zhu, Guangxian},
   title={Mirror symmetric solutions to the centro-affine Minkowski problem},
   journal={Calc. Var. Partial Differential Equations},
   volume={55},
   date={2016},
   number={2},
   pages={Art. 41, 22},
}

\bib{JLL21}{article}{
   author={Ju, HongJie},
   author={Li, BoYa},
   author={Liu, YanNan},
   title={Deforming a convex hypersurface by anisotropic curvature flows},
   journal={Adv. Nonlinear Stud.},
   volume={21},
   date={2021},
   number={1},
   pages={155--166},
}

\bib{KM22}{article}{
   author={Kolesnikov, Alexander V.},
   author={Milman, Emanuel},
   title={Local $L^p$-Brunn-Minkowski inequalities for $p<1$},
   journal={Mem. Amer. Math. Soc.},
   volume={277},
   date={2022},
   number={1360},
   pages={v+78},
}

\bib{LW22}{article}{
    author={Li, Haizhong},
	author={Wan, Yao},
	title={Classification of solutions for the planar isotropic $L_p$ dual Minkowski problem},
	year={2022},
    eprint={arXiv:2209.14630},
	archivePrefix={arXiv},
	primaryClass={math.DG}
}

\bib{LLL22}{article}{
   author={Li, Qi-Rui},
   author={Liu, Jiakun},
   author={Lu, Jian},
   title={Nonuniqueness of solutions to the $L_p$ dual Minkowski problem},
   journal={Int. Math. Res. Not. IMRN},
   date={2022},
   number={12},
   pages={9114--9150},
}

\bib{LW13}{article}{
   author={Lu, Jian},
   author={Wang, Xu-Jia},
   title={Rotationally symmetric solutions to the $L_p$-Minkowski problem},
   journal={J. Differential Equations},
   volume={254},
   date={2013},
   number={3},
   pages={983--1005},
}

\bib{Lut93}{article}{
   author={Lutwak, Erwin},
   title={The Brunn-Minkowski-Firey theory. I. Mixed volumes and the
   Minkowski problem},
   journal={J. Differential Geom.},
   volume={38},
   date={1993},
   number={1},
   pages={131--150},
}

\bib{LYZ18}{article}{
   author={Lutwak, Erwin},
   author={Yang, Deane},
   author={Zhang, Gaoyong},
   title={$L_p$ dual curvature measures},
   journal={Adv. Math.},
   volume={329},
   date={2018},
   pages={85--132},
}

\bib{Mc11}{article}{
   author={McCoy, James Alexander},
   title={Self-similar solutions of fully nonlinear curvature flows},
   journal={Ann. Sc. Norm. Super. Pisa Cl. Sci. (5)},
   volume={10},
   date={2011},
   number={2},
   pages={317--333},
}

\bib{Mil21}{article}{
	author={Milman, Emanuel},
	title={Centro-affine differential geometry and the log-Minkowski problem},
	year={2021},
    eprint={arXiv:2104.12408},
	archivePrefix={arXiv},
	primaryClass={math.FA}
}

\bib{Ni00}{article}{
    author={Niculescu, Constantin P.},
    title={A new look at {N}ewton's inequalities},
    journal={JIPAM. J. Inequal. Pure Appl. Math.},
    volume = {1},
    number = {2},
    pages = {Article 17, 14},
    year = {2000}
}

\bib{Re73}{article}{
    author={Reilly, Robert C.},
    title={Variational properties of functions of the mean curvatures for
              hypersurfaces in space forms},
    journal={J. Differential Geometry},
    volume = {8},
    pages = {465--477},
    year = {1973}
}

\bib{Sa22}{article}{
   author={Saroglou, Christos},
   title={On a non-homogeneous version of a problem of Firey},
   journal={Math. Ann.},
   volume={382},
   date={2022},
   number={3-4},
   pages={1059--1090},
}

\bib{Sch14}{book}{
   author={Schneider, Rolf},
   title={Convex bodies: the Brunn-Minkowski theory},
   series={Encyclopedia of Mathematics and its Applications},
   volume={151},
   edition={Second expanded edition},
   publisher={Cambridge University Press, Cambridge},
   date={2014},
   pages={xxii+736},
}

\bib{SX21}{article}{
   author={Sheng, Weimin},
   author={Xia, Shucan},
   title={The planar $L_p$ dual Minkowski problem},
   journal={Sci. China Math.},
   volume={64},
   date={2021},
   number={7},
   pages={1637--1648},
}

\bib{SY20}{article}{
   author={Sheng, Weimin},
   author={Yi, Caihong},
   title={A class of anisotropic expanding curvature flows},
   journal={Discrete Contin. Dyn. Syst.},
   volume={40},
   date={2020},
   number={4},
   pages={2017--2035},
}

\bib{Si67}{article}{
    author={Simon, Udo},
    title={Minkowskische {I}ntegralformeln und ihre {A}nwendungen in der {D}ifferentialgeometrie im {G}rossen},
    journal={Math. Ann.},
    volume = {173},
    pages = {307--321},
    year = {1967}
}

\bib{Sun18}{article}{
   author={Yijing, Sun},
   title={Existence and uniqueness of solutions to Orlicz Minkowski problems
   involving $0<p<1$},
   journal={Adv. in Appl. Math.},
   volume={101},
   date={2018},
   pages={184--214},
}

\bib{SL15}{article}{
   author={Sun, Yijing},
   author={Long, Yiming},
   title={The planar Orlicz Minkowski problem in the $L^1$-sense},
   journal={Adv. Math.},
   volume={281},
   date={2015},
   pages={1364--1383},
}

\bib{XJL14}{article}{
   author={Xi, Dongmeng},
   author={Jin, Hailin},
   author={Leng, Gangsong},
   title={The Orlicz Brunn-Minkowski inequality},
   journal={Adv. Math.},
   volume={260},
   date={2014},
   pages={350--374},
}

\bib{Zhu14}{article}{
   author={Zhu, Guangxian},
   title={The logarithmic Minkowski problem for polytopes},
   journal={Adv. Math.},
   volume={262},
   date={2014},
   pages={909--931},
}

\end{biblist}
\end{bibdiv}

\end{document}